\newtheorem{theorem}{Theorem}
\newtheorem{lemma}[theorem]{Lemma}
\newtheorem{corollary}[theorem]{Corollary}
\theoremstyle{definition}
\newtheorem{question}[theorem]{Question}
\theoremstyle{remark}
\newtheorem{remark}[theorem]{Remark}
\newcommand{\co}{{\mathcal O}}
\newcommand{\cN}{{\mathcal N}}
\newcommand{\Qbar}{\bar{\mathbb{Q}}}
\DeclareMathOperator{\ord}{ord}
\DeclareMathOperator{\Cl}{Cl}
\DeclareMathOperator{\rk}{rk}
\DeclareMathOperator{\Jac}{Jac}
\DeclareMathOperator{\Pic}{Pic}
\DeclareMathOperator{\Nk}{N_\mathbb{Q}^k}
\DeclareMathOperator{\Nki}{N_{\mathbb{Q}(P_i)}^{k(P_i)}}
\DeclareMathOperator{\NN}{N}
\begin{document}
\title{Ideal Class Groups and Torsion in Picard Groups of Varieties}
\author{Aaron Levin\\Centro di Ricerca Matematica Ennio De Giorgi\\
Collegio Puteano\\
Scuola Normale Superiore\\
Piazza dei Cavalieri, 3\\
I-56100 Pisa, Italy\\
E-mail: aaron.levin@sns.it
}
\date{}
\maketitle
\begin{abstract}
We give a new general technique for constructing and counting number fields with an ideal class group of nontrivial $m$-rank.  Our results can be viewed as providing a way of specializing the Picard group of a variety $V$ over $\mathbb{Q}$ to obtain class groups for number fields $\mathbb{Q}(P)$, $P\in V(\Qbar)$, for certain families of points $P$.  In particular, we show how the problem of constructing quadratic number fields with a large-rank ideal class group can be reduced to the problem of finding a hyperelliptic curve with a rational Weierstrass point and a large rational torsion subgroup in its Jacobian.  Furthermore, we show how many previous results on constructing large-rank ideal class groups can be fit into our framework and rederived.  As an application of our technique, we derive a quantitative version of a theorem of Nakano.  This gives the best known general quantitative result on number fields with a large-rank ideal class group.
\end{abstract}
\bibliographystyle{amsplain}
\section{Introduction}
The primary purpose of this paper is to give a new general technique for constructing and counting number fields with an ideal class group of nontrivial $m$-rank\footnote{For a finite abelian group $A$ and an integer $m>1$, we let $\rk_m A$, the $m$-rank of $A$, be the largest integer $r$ such that $(\mathbb{Z}/m\mathbb{Z})^r$ is a subgroup of $A$.}.  In spirit, our results can be viewed as providing a way of specializing the class group (Picard group) of a variety $V$ over $\mathbb{Q}$ to obtain class groups for number fields $\mathbb{Q}(P)$, $P\in V(\Qbar)$.  For instance, as an example of our results, we show how the problem of constructing quadratic number fields $k$ with a large-rank ideal class group $\Cl(k)$ can be reduced to the problem of finding a hyperelliptic curve $C$ with a rational Weierstrass point and a large rational torsion subgroup $\Jac(C)(\mathbb{Q})_{\rm{tors}}$ in its Jacobian.  More precisely, we prove:
\begin{theorem}
\label{chypintro}
Let $C$ be a nonsingular hyperelliptic curve over $\mathbb{Q}$ with a rational Weierstrass point.  Let $g$ denote the genus of $C$.  Let $m>1$ be an integer.  Then there exist $\gg X^{\frac{1}{2g+1}}/\log X$ imaginary quadratic number fields $k$ with discriminant $d_k$ and
\begin{equation*}
\rk_m \Cl(k)\geq \rk_m \Jac(C)(\mathbb{Q})_{\rm{tors}},\quad |d_k|<X,
\end{equation*}
and $\gg X^{\frac{1}{2g+1}}/\log X$ real quadratic number fields $k$ with
\begin{equation*}
\rk_m \Cl(k)\geq \rk_m \Jac(C)(\mathbb{Q})_{\rm{tors}}-1,\quad d_k<X.
\end{equation*}
\end{theorem}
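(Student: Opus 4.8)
The plan is to realise the group $\Jac(C)(\mathbb{Q})_{\mathrm{tors}}$ inside $\Cl(k)$ for a large family of quadratic fields $k$ by \emph{specializing} $C$ along its degree-$2$ map to $\mathbb{P}^1$, and then to count the fields so obtained. First I would move the rational Weierstrass point to infinity, so that $C\colon y^2=f(x)$ with $f\in\mathbb{Z}[x]$ of degree $2g+1$ and $\Disc(f)\ne 0$. Put $r=\rk_m\Jac(C)(\mathbb{Q})_{\mathrm{tors}}$, fix a subgroup $G\cong(\mathbb{Z}/m\mathbb{Z})^r$ of $\Jac(C)(\mathbb{Q})_{\mathrm{tors}}$, and for each element $T$ of a finite generating set of $G$ pick a $\mathbb{Q}$-rational divisor $D_T\in\Div^0(C)$ representing $T$ together with a function whose divisor is an appropriate multiple of $D_T$; let $S$ be the finite set of primes dividing the leading coefficient of $f$, or $\Disc(f)$, or occurring in these divisors and functions. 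For an integer $n$ with $f(n)\ne 0$ and $f(n)\notin(\mathbb{Q}^\ast)^2$, set $k_n=\mathbb{Q}(\sqrt{f(n)})$ and $P_n=(n,\sqrt{f(n)})\in C(k_n)$. After imposing one congruence $n\equiv n_0\pmod M$ with $M$ supported on $S$, chosen so that $P_n$ extends to a section $\operatorname{Spec}\co_{k_n}\to\cC$ of a regular model $\cC$ of $C$ meeting none of the closures of the $D_T$ in the fibres above $S$, pullback of divisors along this section defines a homomorphism $\sigma_n\colon\Jac(C)(\mathbb{Q})\to\Cl(\co_{k_n})$ --- and it lands in $\Cl(\co_{k_n})$ itself, not merely in $\Cl(\co_{k_n}[1/S])$, which is what prevents a loss of rank to the primes of $S$. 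Since $G$ is killed by $m$ we get $\sigma_n(G)\subseteq\Cl(k_n)[m]$; and since $\deg f$ is odd, $f(n)<0$ for $n\ll 0$ and $f(n)>0$ for $n\gg 0$, so $k_n$ is imaginary in the first range, real in the second, with $|d_{k_n}|\ll_C|n|^{2g+1}$ in both.

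The heart of the proof is that, for all $n$ in the chosen residue class outside a thin set, $\ker(\sigma_n|_G)$ is cyclic, and is trivial when $k_n$ is imaginary. Granting this, $\sigma_n(G)\cong G/\ker(\sigma_n|_G)$ has $m$-rank $r$ in the imaginary case and $\ge r-1$ in the real case --- because $(\mathbb{Z}/m\mathbb{Z})^r$ modulo any cyclic subgroup has $m$-rank $\ge r-1$, as the Smith normal form of the defining relation matrix shows --- so $\rk_m\Cl(k_n)\ge\rk_m\sigma_n(G)$ is $r$, resp.\ $\ge r-1$, as required. The cyclicity is where Hilbert irreducibility enters: if $\sigma_n(T)=0$ for some $T\in G$ of prime order $\ell$, then, writing $\operatorname{div}(p_T)=\ell D_T$, one finds $p_T(P_n)\in\co_{k_n}^\ast\cdot(k_n^\ast)^\ell$. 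The key observation is that units --- in particular roots of unity --- are constants in $\mathbb{Q}(C)$ and hence invisible on divisors: so if $p_T(P_n)$ were a root of unity times an $\ell$-th power in $k_n^\ast$ for a non-thin set of $n$, then Hilbert irreducibility (applied to the cover of $\mathbb{P}^1$ built from $z^\ell=p_T$ and $x\colon C\to\mathbb{P}^1$, with $k_n$-points detected through the hyperelliptic involution $\iota$, which acts by $-1$ on $\Jac(C)$ so that $p_T\cdot\iota^\ast p_T\in\mathbb{Q}(x)$) would force $p_T$ to be a constant times an $\ell$-th power in $\Qbar(C)^\ast$, hence $D_T$ principal over $\Qbar$ and $T=0$ --- impossible. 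Thus for imaginary $k_n$ outside a thin set (where $\co_{k_n}^\ast$ consists only of roots of unity) $\ker(\sigma_n|_G)$ has no element of prime order, so it is trivial; while for real $k_n$, where $\co_{k_n}^\ast$ modulo roots of unity is infinite cyclic, the same input rules out a copy of $(\mathbb{Z}/\ell\mathbb{Z})^2$ in $\ker(\sigma_n|_G)$ (two independent elements mapping into a common cyclic group would be dependent there, and a suitable combination of them would fall into the ``root of unity times $\ell$-th power'' case above, forcing a dependence), so $\ker(\sigma_n|_G)$ is cyclic. The remaining thin-set bookkeeping is routine, $G$ being finite and a thin subset of $\mathbb{Z}$ meeting $[1,N]$ in $O(N^{1/2+\epsilon})$ integers.

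For the count, take $N\asymp X^{1/(2g+1)}$: the integers in $[-2N,-N]$ (resp.\ $[N,2N]$) in the chosen residue class number $\gg N$, and after deleting the thin exceptional set, the finitely many $n$ with $f(n)\in(\mathbb{Q}^\ast)^2$, and the finitely many $n$ for which $k_n$ has extra roots of unity, $\gg N$ remain --- each producing an imaginary (resp.\ real) quadratic field $k_n$ with $|d_{k_n}|<X$ and, by the above, $\rk_m\Cl(k_n)\ge r$ (resp.\ $\ge r-1$). Finally $k_n=k_{n'}$ forces $f(n)f(n')\in(\mathbb{Q}^\ast)^2$, which for $n\ne n'$ is an integral point with bounded coordinates on a curve of genus $g\ge 1$; a counting estimate for such points, uniform in the twist, bounds by $O_C(\log N)$ the number of $n$ giving any single field, so the $k_n$ realise $\gg N/\log N=\gg X^{1/(2g+1)}/\log X$ distinct fields of each kind, which is the assertion. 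The step I expect to be the main obstacle is the cyclicity analysis above: constructing $\sigma_n$ integrally over $\co_{k_n}$, and executing the Hilbert-irreducibility argument for families of quadratic points while keeping track of $\co_{k_n}^\ast$, whose rank-one free part is exactly what permits a nontrivial (but cyclic) kernel and thus costs one in the real case. By comparison the counting is soft; its only delicate input is the uniform-in-the-twist bound on integral points used to limit field collisions.
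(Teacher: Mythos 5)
Your overall strategy---push a fixed $(\mathbb{Z}/m\mathbb{Z})^r\subseteq\Jac(C)(\mathbb{Q})_{\rm tors}$ into $\Cl(k_n)$ for $k_n=\mathbb{Q}(\sqrt{f(n)})$ and use Hilbert irreducibility to show the kernel of the specialization is small---is the same philosophy as the paper's, and your kernel analysis (relations $p_T(P_n)\in\co_{k_n}^*\cdot(k_n^*)^\ell$ confined to a thin set of $n$, the rank-one unit group of a real quadratic field costing exactly one) is essentially Lemma \ref{ldeg} run in the contrapositive. The genuine gap is at the step you yourself flag as the crux: the assertion that one congruence $n\equiv n_0\pmod M$ makes $P_n$ extend to a section of a regular model avoiding the closures of the $D_T$ over the bad primes, so that $\sigma_n$ lands in $\Cl(\co_{k_n})$ itself ``with no loss of rank to the primes of $S$.'' This cannot simply be imposed. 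At a small prime $p$ of bad reduction the $x$-coordinates of the support of $(p_T)$ may cover every residue class mod $p$, so no congruence class of integers keeps $P_n$ away from the horizontal closures; and even where the section does avoid them, $(p_T)$ on a regular model has vertical components over $p$ whose multiplicities enter $v_{\mathfrak{p}}(p_T(P_n))$, so the valuations at places above $S$ need not be divisible by $\ell$ and the ideal $\mathfrak{a}$ with $p_T(P_n)\co_{k_n}=\mathfrak{a}^\ell$ may fail to exist (only the $S$-ideal statement is automatic, via Weil decomposition). The paper resolves exactly this difficulty in the opposite direction: Theorem \ref{thmain} is proved for $S$-class groups and carries the correction $\#S_{\rm fin}-\rk\co_{k,S}^*$, and Corollary \ref{super} then specializes along $x=i+\frac{1}{M}$ with $M=\prod_{p\in S_{\rm fin}}p$ and leading coefficient $-1$, forcing every $p\in S_{\rm fin}$ to be totally ramified in $k_n$; with exactly one place over each such prime the corrections cancel and only the archimedean unit rank ($0$ imaginary, $1$ real) is lost. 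Your proposal needs either this device or an actual argument producing the integral avoidance you assume; as written that step would fail in general. (A smaller point in the same part of the argument: for $\ell=2$ the root of unity $-1$ cannot be absorbed into an $\ell$-th power, so you must also run the irreducibility argument for the cover $z^2=-p_T$.)

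The counting step also rests on an unavailable input: a bound of $O_C(\log N)$, uniform in the squarefree twist $d$, for the number of integers $n\in[N,2N]$ with $f(n)\in d\,(\mathbb{Q}^*)^2$, i.e.\ for integral points on $dz^2=f(x)$ in a box. No such uniform bound is known (known uniform estimates involve the rank of the twist or are of the shape $N^{\epsilon}$). The paper instead invokes the Dvornicich--Zannier theorem (Theorem \ref{Zan}), which directly yields $\gg N/\log N$ distinct fields among $\mathbb{Q}(P_1),\ldots,\mathbb{Q}(P_N)$; replacing your twist-counting claim by that citation repairs this part, and the rest of your count (the choice $N\asymp X^{1/(2g+1)}$ and the bound $|d_{k_n}|\ll |n|^{2g+1}$) is fine.
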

More generally, we show that if $C$ is a nonsingular projective curve with $C(\mathbb{Q})\neq \emptyset$, then for most points $P$ in certain families of points on $C$, $\rk_m\Cl(\mathbb{Q}(P))$ can be bounded from below by a function of $\rk_m \Jac(C)(\mathbb{Q})_{\rm{tors}}$, the unit group $\co_{\mathbb{Q}(P)}^*$, and the set of places of bad reduction of the Jacobian of $C$.

In Section \ref{apps} we show how many of the previous results on constructing large-rank ideal class groups can be explained and rederived in the arithmetic-geometric context of this paper.  Thus, in the next section we give an overview and brief summary of the history of results on the problem of finding infinite families of number fields of degree $n$ over $\mathbb{Q}$ with ideal class groups of large $m$-rank.  We also obtain some new quantitative results on counting number fields with a large-rank ideal class group.  Of particular note is a quantitative version of a theorem of Nakano (Theorem \ref{QN}).

In \cite{Lev2}, by taking advantage of certain superelliptic curves, we obtained new results (see Theorems \ref{thLev1} and \ref{thLev2}) on class groups of large $m$-rank.  The proofs in \cite{Lev2} were given in a very explicit, concrete manner, avoiding any mention of the Jacobian or Picard group of a curve, but implicitly using the rational torsion in these objects.  The present paper is an abstraction and generalization of the technique used in \cite{Lev2}.

Fundamentally, in the simplest case, the basic idea is the following:  find polynomials $p,q\in k[x,y]$, for some number field $k$, such that if $i\in \mathbb{Z}$, $p(\alpha_i,i)=0$, and $\rk_m \Cl(\mathbb{Q}(\alpha_i))$ is small, then $q(x,i)$ is reducible over $k$.  By Hilbert's Irreducibility Theorem, we know that $q(x,i)$ is irreducible over $k$ for most values of $i\in \mathbb{Z}$, and therefore $\rk_m \Cl(\mathbb{Q}(\alpha_i))$ cannot be small for most values of $i$.  In a more geometric language, this construction can be accomplished by using torsion in the Picard group of a curve.  Let $C$ be a nonsingular projective curve over $\mathbb{Q}$ and let $\phi:C\to \mathbb{P}^1$ be a morphism.  Using $m$-torsion in $\Pic(C)$, the Picard group of $C$, we construct a certain curve $X$ and an unramified abelian cover $\pi:X\to C$.  This map has the property that if $\pi(Q)=P\in C(\Qbar)$, then the degree $[\mathbb{Q}(Q):\mathbb{Q}]$ is bounded by a function of the $m$-rank of $\Cl(\mathbb{Q}(P))$.  Consider points $Q_i\in X(\Qbar)$, $P_i\in C(\Qbar)$, with $\pi(Q_i)=P_i$ and $\phi(P_i)=i\in \mathbb{A}^1(\mathbb{Z})\subset \mathbb{P}^1(\mathbb{Q})$.  By Hilbert's Irreducibility Theorem, the degree $[\mathbb{Q}(Q_i):\mathbb{Q}]$ will usually be equal to $\deg \phi\circ\pi=(\deg \phi)(\deg \pi)$.  On the other hand, $[\mathbb{Q}(Q_i):\mathbb{Q}]$ is bounded by a function of $\rk_m \Cl(\mathbb{Q}(P_i))$.  Thus, by comparing these two statements we obtain number fields $\mathbb{Q}(P_i)$ with $[\mathbb{Q}(P_i):\mathbb{Q}]=\deg \phi$ and $\rk_m\Cl(\mathbb{Q}(P_i))$ bounded from below.  An alternative approach, replacing the use of Hilbert's Irreducibility Theorem by finiteness results on integral points of bounded degree on curves, was given in \cite{Lev2}.

Using this method, to obtain number fields of degree $n$ with ideal class groups of large $m$-rank, one needs to find $n$-gonal\footnote{Recall that a curve $C$ is called $n$-gonal if there exists a degree $n$ map from $C$ to $\mathbb{P}^1$.} curves $C$ over $\mathbb{Q}$ with $\rk_m \Pic(C)_{\rm{tors}}$ large.  This leads to the following natural question, which we state in terms of Jacobians\footnote{If $C(\mathbb{Q})\neq \emptyset$, then $\Pic(C)_{\rm tors}\cong \Jac(C)(\mathbb{Q})_{\rm tors}$;  in general, since a rational divisor class may not be represented by a rational divisor, there is only an inclusion $\Pic(C)_{\rm tors}\hookrightarrow\Jac(C)(\mathbb{Q})_{\rm tors}$.}.
\begin{question}
\label{Q1}
Let $n>1$ be a positive integer.  Let $p$ be a prime not dividing $n$.  Do there exist $n$-gonal curves $C$ over $\mathbb{Q}$ with $\rk_p \Jac(C)(\mathbb{Q})_{\rm{tors}}$ arbitrarily large?
\end{question}
\noindent In particular, we ask the following question of hyperelliptic curves:
\begin{question}
\label{Q2}
Let $p$ be an odd prime.  Do there exist hyperelliptic curves $C$ over $\mathbb{Q}$ with $\rk_p \Jac(C)(\mathbb{Q})_{\rm{tors}}$ arbitrarily large?
\end{question}
We have excluded the case when $p$ divides $n$ since in this case it is easy to construct $n$-gonal curves $C$ over $\mathbb{Q}$ with $\rk_p \Jac(C)(\mathbb{Q})_{\rm{tors}}$ arbitrarily large (analogously, when $p$ divides $n$ it is possible to construct number fields $k$ of degree $n$ with $\rk_p\Cl(k)$ arbitrarily large).

Previous papers studying the problem of constructing Jacobians of curves with large rational torsion subgroups have primarily focused on either curves of low genus \cite{Poo, Lep1, Lep2, Lep3, Lep4, Oga}, or on producing a rational torsion point of large order in the Jacobian of a curve of genus $g$, for every genus $g$ \cite{Fly1, Fly2, Lep6, Lep7, Lep8, Lep9}.  The main motivation in these papers seems to be to determine how Mazur's theorem classifying rational torsion in elliptic curves might generalize to curves of higher genus.  We hope to further motivate research into large rational torsion subgroups of Jacobians, though from the slightly different perspective of Questions \ref{Q1} and \ref{Q2}.

\section{Overview of Previous Results}
In this section we give a brief summary of the history of results on the problem of finding infinite families of number fields of degree $n$ over $\mathbb{Q}$ with ideal class groups of large $m$-rank (see Tables \ref{table1} and \ref{table2} for a more comprehensive list of results).  The earliest such result could be considered to be Gauss' result determining, in modern terms, the $2$-rank of the class group of a quadratic number field in terms of the primes dividing the discriminant of the quadratic field (see, e.g., \cite{Her}).  In particular, it follows from Gauss' result that the $2$-rank of the ideal class group of a quadratic number field can be made arbitrarily large.  In contrast to Gauss' result, there is not a single quadratic number field $k$ and prime $p\neq 2$ for which it is known that $\rk_p \Cl(k)>6$, although the Cohen-Lenstra heuristics \cite{Len, Len2} predict that for any given positive integer $r$, a positive proportion of quadratic fields $k$ should have $\rk_p \Cl(k)=r$.  
\begin{table}
\caption{Table of values $m$, $n$, and $r$ for which it is known that there exist infinitely many number fields $k$ of degree $n$ with $\rk_m\Cl(k)\geq r$ (with $r=\infty$ if $\rk_m\Cl(k)$ can be made arbitrarily large).\newline}
\centering
Quadratic fields ($n=2$)
\begin{tabular}{|l|c|c|c|c|}
\hline
Author(s) & Year & Type & $m$ & $r$\\
\hline
Gauss & 19th c. & imaginary, real & $2$ & $\infty$\\
\hline
Nagell \cite{Nag, Nag2}& 1922 & imaginary & $>1$ & $1$ \\
\hline
Yamamoto \cite{Yam} & 1970 & imaginary & $>1$& $2$\\
\hline
Yamamoto \cite{Yam}, Weinberger\cite{Wei} &1970, 1973 & real & $>1$& $1$\\
\hline
Craig \cite{Cra} & 1973 & imaginary & $3$& $3$\\
&  & real & $3$& $2$\\
\hline
Craig \cite{Cra2} & 1977 & imaginary & $3$& $4$\\
 & & real & $3$& $3$\\
\hline
Diaz \cite{Dia} & 1978 & real & $3$& $4$\\
\hline
Mestre \cite{Mes2, Mes, Mes3} & 1980 & imaginary, real & $5,7$& $2$\\
\hline
Mestre \cite{Mes4} & 1992 & imaginary, real & $5$& $3$\\
\hline
\end{tabular}
\newline\newline
Higher Degree Fields
\begin{tabular}{|l|c|c|c|c|}
\hline
Author(s) & Year & $m$ & $n$ & $r$\\
\hline
Brumer, Rosen \cite{Bru, Ros} & 1965 & $>1$ & $n=m$ & $\infty$\\
\hline
Uchida \cite{Uch} & 1974 & $>1$ & $3$ & $1$\\
\hline
Ishida \cite{Ish} & 1975 & $2$ & prime & $n-1$\\
\hline
Azuhata, Ichimura \cite{Ich} & 1984 & $>1$ & $>1$ & $\left\lfloor \frac{n}{2}\right\rfloor$\\
\hline
Nakano \cite{Nak5, Nak2} & 1985 & $>1$ & $>1$ & $\left\lfloor \frac{n}{2}\right\rfloor+1$\\
 &  & $2$ & $>1$ & $n$\\
\hline
Nakano \cite{Nak4} & 1988 & $2$ & $3$ & $6$\\
\hline
Levin \cite{Lev2} & 2006 & $>1$ & $>1$ & $\left\lceil\left\lfloor\frac{n}{2}\right\rfloor+\frac{n}{m-1}-m\right\rceil$\\
\hline
\end{tabular}
\label{table1}
\end{table}

The first constructive result on $m$-ranks of class groups for arbitrary $m$ was given in 1922 by Nagell \cite{Nag, Nag2}, who proved that for any positive integer $m$, there exist infinitely many imaginary quadratic number fields whose class group has an element of order $m$ (in particular, there are infinitely many imaginary quadratic fields with class number divisible by $m$).  Nagell's result has since been reproved by a number of different authors (e.g., \cite{Cho}, \cite{Hum}, \cite{Kur}).  Nearly fifty years later, working independently, Yamamoto \cite{Yam} and Weinberger \cite{Wei} extended Nagell's result to real quadratic fields.  Soon after, Uchida \cite{Uch} proved the analogous result for cubic cyclic fields.  In 1984, Azuhata and Ichimura \cite{Ich} succeeded in extending Nagell's result to number fields of arbitrary degree.  In fact, they proved that for any integers $m,n>1$ and any nonnegative integers $r_1$, $r_2$, with $r_1+2r_2=n$, there exist infinitely many number fields $k$ of degree $n=[k:\mathbb{Q}]$ with $r_1$ real places and $r_2$ complex places such that
\begin{equation}
\label{Ich}
\rk_m \Cl(k)\geq r_2.
\end{equation}
The right-hand side of (\ref{Ich}) was subsequently improved to $r_2+1$ by Nakano \cite{Nak5, Nak2}.  Choosing $r_2$ as large as possible, we thus obtain, for any $m$, infinitely many number fields $k$ of degree $n>1$ with
\begin{equation}
\label{eqN1}
\rk_m \Cl(k)\geq \left\lfloor\frac{n}{2}\right\rfloor+1,
\end{equation}
where $\lfloor \cdot \rfloor$ and $\lceil \cdot \rceil$ denote the greatest and least integer functions, respectively.  For general $m$ and $n$, \eqref{eqN1} is the best result that is known on producing number fields of degree $n$ with a class group of large $m$-rank.  In \cite{Lev2} it was shown that there exist infinitely many number fields $k$ of degree $n$ satisfying $\rk_m \Cl(k)\geq \left\lceil\left\lfloor\frac{n}{2}\right\rfloor+\frac{n}{m-1}-m\right\rceil$, improving \eqref{eqN1} when $n\geq m^2$.

For certain special values of $m$ and $n$, slightly more is known.  Of particular note to us are Mestre's papers \cite{Mes,Mes3,Mes2} giving the best known results for $m=5,7$ and $n=2$.  Although he uses a somewhat different approach, Mestre's method also crucially relies on rational torsion in Jacobians of curves.  Thus, his method lies in the same general vein as the present paper.
\begin{table}
\caption[test]{Table of $m$, $n$, $r$, and $f(X)$, for which it is known that $\cN_{m,n,r}(X)\gg f(X)$, where we let $d_k$ be the discriminant of $k$ over $\mathbb{Q}$ and
}
\centering
\begin{equation*}
\cN_{m,n,r}(X)=\#\{k\subset \bar{\mathbb{Q}}\mid [k:\mathbb{Q}]=n, \rk_m\Cl(k)\geq r, |d_k|<X\}.
\end{equation*}
Quadratic fields ($n=2$)
\begin{tabular}{|l|c|c|c|c|c|c|}
\hline
Author(s) & Year & Type & $m$ & $r$ & $f(X)$\\
\hline
Murty \cite{Mur2} & 1999 & imaginary & $>1$  & $1$ & $X^{\frac{1}{2}+\frac{1}{m}}$\\
& & real & odd  & $1$ & $X^{\frac{1}{2m}-\epsilon}$\\
\hline
Soundararajan \cite{Sou} & 2000 & imaginary & $m \equiv 0   \text{ (mod $4$)}$  & $1$ & $X^{\frac{1}{2}+\frac{2}{m}-\epsilon}$\\
 &  &  & $m \equiv 2   \text{ (mod $4$)}, m\neq 2$  & $1$ & $X^{\frac{1}{2}+\frac{3}{m+2}-\epsilon}$\\
\hline
Yu \cite{Yu} & 2002 & real & odd  & $1$ & $X^{\frac{1}{m}-\epsilon}$\\
\hline
Luca \cite{Luca2} & 2003 & real & even  & $1$ & $X^{\frac{1}{m}}/\log X$\\
\hline
Chakraborty, Murty \cite{Mur} & 2003 & real & 3  & $1$ & $X^{\frac{5}{6}}$\\
\hline
Byeon \cite{Bye2} & 2003 & real & 3  & $1$ & $X^{\frac{7}{8}}$\\
\hline
Byeon \cite{Bye} & 2006 & real & 5,7  & $1$ & $X^{\frac{1}{2}}$\\
\hline
Byeon \cite{Bye3} & 2006 & imaginary & odd  & $2$ & $X^{\frac{1}{m}-\epsilon}$\\
\hline
\end{tabular}
\newline\newline
Higher Degree Fields
\begin{tabular}{|l|c|c|c|c|c|}
\hline
Author(s) & Year & $m$ & $n$ & $r$ & $f(X)$\\
\hline
Hern{\'a}ndez, Luca \cite{Luca} & 2004 & $>1$ & $3$ & $1$ & $X^{\frac{1}{6m}}$\\
\hline
Bilu, Luca \cite{Bilu} & 2005 & $>1$ & $>1$ & $1$ & $X^{\frac{1}{2m(n-1)}}$\\
\hline
Levin \cite{Lev2} & 2006& $>1$ & $>1$ & $\left\lfloor \frac{n}{2}\right\rfloor$& $X^{\frac{1}{m(n-1)}}/\log X$\\
& &$>1$ & $>(m-1)^2$ & $\left\lceil\left\lfloor\frac{n}{2}\right\rfloor+\frac{n}{m-1}-m\right\rceil$& $ X^{\frac{1}{(m+1)n-1}}/\log X$\\
\hline
\end{tabular}
\label{table2}
\end{table}

Recently, progress has been made on obtaining quantitative results on counting the number fields in the above results.  Murty \cite{Mur2} gave the first results in this direction, obtaining quantitative versions of the theorems of Nagell and Yamamoto-Weinberger.  His results have since been improved by, among others, Soundararajan \cite{Sou} in the imaginary quadratic case and Yu \cite{Yu} in the real quadratic case.  In higher degrees, Hern{\'a}ndez and Luca \cite{Luca} gave the first such result for cubic number fields, while Bilu and Luca \cite{Bilu} succeeded in proving a quantitative theorem for number fields of arbitrary degree.  Bilu and Luca's result was improved in \cite{Lev2}, where a quantitative version of Azuhata and Ichimura's result was given.  We give a summary of these and other quantitative results in Table \ref{table2}.

\section{Hilbert's Irreducibility Theorem}
The main tool in our proofs is Hilbert's Irreducibility Theorem, which we now recall in a suitably general, quantitative form due to Cohen \cite{Coh2} (see also \cite[Ch. 9]{Ser}).  Throughout, $k$ will denote a number field.
\begin{theorem}[Hilbert Irreducibility Theorem]
\label{HIT2}
Let $f\in k[x,t_1,\ldots,t_n]$ be a polynomial irreducible in $k[x,t_1,\ldots,t_n]$.  Let $g\in k[t_1,\ldots,t_n]$ be nonzero.  Then for all but $O\left(X^{n-\frac{1}{2}}\log X\right)$ points $(i_1,\ldots, i_n)\in [-X,X]^n\cap \mathbb{Z}^n$, the polynomial $f(x,i_1,\ldots, i_n)$ is irreducible over $k$ and $g(i_1,\ldots, i_n)\neq 0$.  If  $n=1$, $O\left(\sqrt{X}\log X\right)$ can be replaced by $O\left(\sqrt{X}\right)$.
\end{theorem}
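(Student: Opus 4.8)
We sketch the argument, following Cohen \cite{Coh2} (see also \cite[Ch.~9]{Ser}). Write $\mathbf i=(i_1,\dots,i_n)$ and $B_X=[-X,X]^n\cap\mathbb Z^n$. The factor $g$ is harmless: a nonzero $g\in k[t_1,\dots,t_n]$ has $O(X^{n-1})$ zeros in $B_X$, which lies below the target error term. We may assume $d:=\deg_x f\ge1$, and if $d=1$ there is nothing to prove once the leading coefficient $a_d(\mathbf t)\in k[\mathbf t]$ (a nonzero polynomial) does not vanish at $\mathbf i$, which excludes only $O(X^{n-1})$ further points; so assume $d\ge2$. Since $f$ is irreducible, hence separable (characteristic $0$) over $K:=k(t_1,\dots,t_n)$, its $x$-discriminant $\Delta(\mathbf t)\in k[\mathbf t]$ is a nonzero polynomial. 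After discarding the $O(X^{n-1})$ points of $B_X$ on which $a_d$ or $\Delta$ vanishes, it suffices to prove
\[
\#\{\mathbf i\in B_X : f(x,\mathbf i)\ \text{reducible over}\ k\}=O\!\left(X^{n-\tfrac12}\log X\right).
\]

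The set-up is geometric. Let $L$ be a splitting field of $f(x,\mathbf t)$ over $K$ and $G=\Gal(L/K)$; by Gauss's lemma $f$ is irreducible over $K$, so $G$, acting on the $d$ roots, is a transitive subgroup of $S_d$. Spreading out, there is a dense open $U\subseteq\mathbb A^n_k$ whose complement contains only $O(X^{n-1})$ points of $B_X$, together with a finite \'etale Galois cover $Y\to U$ with group $G$ realizing $L/K$. For $\mathbf i\in B_X$ lying in $U$, the fiber determines a homomorphism $\rho_{\mathbf i}\colon\Gal(\bar k/k)\to G$, well defined up to conjugacy, and $f(x,\mathbf i)$ factors over $k$ according to the orbits of $H_{\mathbf i}:=\mathrm{im}\,\rho_{\mathbf i}$ on the $d$ roots; in particular $f(x,\mathbf i)$ is reducible over $k$ precisely when $H_{\mathbf i}$ is intransitive, which forces $H_{\mathbf i}\ne G$, hence $H_{\mathbf i}$ is conjugate into some maximal subgroup $M<G$. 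Moreover, for a prime $\mathfrak p$ of $k$ of good reduction at which $\mathbf i$ still reduces into $U$, the Frobenius class $\mathrm{Frob}_\mathfrak p(\bar{\mathbf i})\subseteq G$ of the reduced cover at $\bar{\mathbf i}\in U(\mathbb F_q)$, $q=N\mathfrak p$, is the conjugacy class of $\rho_{\mathbf i}(\mathrm{Frob}_\mathfrak p)$. Hence, whenever $f(x,\mathbf i)$ is reducible over $k$, one has $\mathrm{Frob}_\mathfrak p(\bar{\mathbf i})\subseteq\bigcup_{g\in G}gMg^{-1}$ for \emph{every} good $\mathfrak p$, where $M$ depends only on $\mathbf i$.

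Now sieve. By a classical theorem of Jordan, a finite group is never the union of the conjugates of a proper subgroup, so for each of the finitely many conjugacy classes of maximal subgroups $M<G$ we may fix a conjugacy class $C_M\subseteq G\setminus\bigcup_g gMg^{-1}$, necessarily nonempty. By the Lang--Weil estimates applied to $Y\to U$ --- a Chebotarev density statement over finite fields --- for every good $\mathfrak p$ with $q=N\mathfrak p$ large one has $\#\{\bar{\mathbf i}\in U(\mathbb F_q):\mathrm{Frob}_\mathfrak p(\bar{\mathbf i})=C_M\}\ge\frac{|C_M|}{2|G|}\,q^n$, with all implied quantities depending only on $f$. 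Thus the $\mathbf i\in B_X$ for which $f(x,\mathbf i)$ is reducible with $H_{\mathbf i}$ conjugate into a fixed $M$ avoid a fixed positive proportion of residues modulo every good $\mathfrak p\le Q$, and the $n$-dimensional large sieve inequality gives
\[
\#\{\text{such}\ \mathbf i\}\ \ll\ \frac{(2X+Q^2)^n}{\sum_{N\mathfrak p\le Q}1}\ \ll\ \frac{(2X+Q^2)^n\log Q}{Q}.
\]
Choosing $Q\asymp\sqrt X$ yields $O(X^{n-1/2}\log X)$, and summing over the finitely many $M$ completes the proof; for $n=1$ the $\log X$ is removed by a sharper treatment of the one-variable case as in \cite{Coh2}.

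The main obstacle is the uniform finite-field Chebotarev/Lang--Weil input: one must know that for all but finitely many $\mathfrak p$ the monodromy of the reduced cover $Y_\mathfrak p\to U_\mathfrak p$ is again $G$, and control both the error term $O(q^{n-\tfrac12})$ and the exceptional set of $\mathfrak p$ in terms of the degree of $f$ and the size of its coefficients, so that the sieve constants are independent of $\mathfrak p$. The group-theoretic ingredient --- Jordan's covering lemma, which supplies a genuinely missed class $C_M$ --- is what lets a \emph{single} prime exclude a positive density of residues, thereby making the large sieve bite; its absence is exactly why the trivial remark ``$f(x,\mathbf i)\bmod\mathfrak p$ is reducible for every $\mathfrak p$'' carries no density and is useless, reflecting the genuinely global nature of Hilbert irreducibility.
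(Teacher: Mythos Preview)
The paper does not prove this theorem; it is quoted as a known quantitative form of Hilbert's Irreducibility Theorem due to Cohen \cite{Coh2} (with a further pointer to \cite[Ch.~9]{Ser}), and is used throughout as a black box. There is therefore no ``paper's own proof'' to compare against.

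Your sketch is a faithful outline of Cohen's large-sieve argument: reduce to bounding the $\mathbf i$ with $H_{\mathbf i}\subsetneq G$, use Jordan's covering lemma to find, for each maximal $M<G$, a conjugacy class $C_M$ missed by $\bigcup_g gMg^{-1}$, invoke Lang--Weil/finite-field Chebotarev to show that $\mathrm{Frob}_\mathfrak p(\bar{\mathbf i})=C_M$ occurs for a positive proportion of $\bar{\mathbf i}\in U(\mathbb F_q)$, and feed this into the $n$-dimensional large sieve with $Q\asymp\sqrt X$. The structure and the identification of the main technical point (uniform geometric monodromy and Lang--Weil constants independent of $\mathfrak p$) are both correct. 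As a sketch this is fine; if you intend it as more than an outline you would need to make the large-sieve step precise (the denominator should really be the usual sieve sum $L(Q)$, which is $\gg Q/\log Q$ under your positive-proportion hypothesis, rather than literally $\sum_{N\mathfrak p\le Q}1$) and to spell out the spreading-out argument guaranteeing that the geometric monodromy of $Y_\mathfrak p\to U_\mathfrak p$ equals $G$ for almost all $\mathfrak p$.
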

We will also find it convenient to use an equivalent geometric formulation of Hilbert's theorem.  Before stating this, we give some definitions and notation.  By a variety over a field $k$, we mean a geometrically integral, separated scheme of finite type over $k$.  A curve is a one-dimensional variety.  We define a dominant rational map $\phi:V\to W$ of varieties over $k$ to be generically finite if the extension of function fields $k(V)/k(W)$, induced by $\phi$, is a finite extension.  For a point ${\bf i}=(i_1,\ldots, i_n)\in \mathbb{A}^n(\mathbb{Z})$, we define $H({\bf i})=\max_j |i_j|$.
\begin{theorem}[Geometric Hilbert Irreducibility Theorem]
\label{HIT}
Let $V$ be a variety over $k$ of dimension $n$.  Let $\phi:V\to \mathbb{A}^n$ be a generically finite rational map.  Let $Z$ be a proper closed subset of $\mathbb{A}^n$.  If ${\bf i}\in \mathbb{A}^n(\mathbb{Z})$ and ${\bf i}\in \phi(V)$, let $P_{\bf i}\in \phi^{-1}({\bf i})$.  Then for all but $O\left(X^{n-\frac{1}{2}}\log X\right)$ points ${\bf i}\in \mathbb{A}^n(\mathbb{Z})$ with $H({\bf i})\leq X$, $P_{\bf i}$ is defined, ${\bf i}\not\in Z$, and $[k(P_{\bf i}):k]=\deg \phi$.   If  $n=1$, $O\left(\sqrt{X}\log X\right)$ can be replaced by $O\left(\sqrt{X}\right)$.
\end{theorem}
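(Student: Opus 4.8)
The plan is to reduce Theorem~\ref{HIT} to the algebraic form Theorem~\ref{HIT2} by replacing $\phi\colon V\to\mathbb{A}^n$ with an explicit affine hypersurface model over a suitable dense open of $\mathbb{A}^n$. Write ${\bf t}=(t_1,\ldots,t_n)$ for the coordinates on $\mathbb{A}^n$. Since $\phi$ is generically finite, the induced extension of function fields $k(V)/k({\bf t})$ is finite of degree $d:=\deg\phi$, so by the primitive element theorem $k(V)=k({\bf t})(\theta)$ for some $\theta$. Clearing denominators in the minimal polynomial of $\theta$ over $k({\bf t})$ and applying Gauss's lemma, we may pick $f\in k[x,{\bf t}]$ that is irreducible in $k[x,{\bf t}]$, has $\deg_x f=d$, and satisfies $k(V)\cong\mathrm{Frac}\bigl(k[x,{\bf t}]/(f)\bigr)$ as $k({\bf t})$-algebras. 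Thus $V$ is birational over $\mathbb{A}^n$ to the hypersurface $W=\mathrm{Spec}\,\bigl(k[x,{\bf t}]/(f)\bigr)\subseteq\mathbb{A}^{n+1}$ with its projection $\psi\colon W\to\mathbb{A}^n$.

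Next I would pass to a dense open $U\subseteq\mathbb{A}^n$ over which the picture is clean. By standard spreading-out arguments, $U$ can be shrunk so that: (i) the domain of definition of $\phi$ contains $\phi^{-1}(U)$ and $\phi$ restricts there to a finite morphism onto $U$; (ii) this finite morphism is identified over $U$ with the projection $\psi^{-1}(U)\to U$; (iii) the leading coefficient of $f$ in $x$ is a unit on $U$, so $\deg_x f(x,{\bf i})=d$ for all ${\bf i}\in U$; and (iv) $U\cap Z=\emptyset$, which is possible because $Z$ is a proper closed subset. Then $\mathbb{A}^n\setminus U$ is a proper closed subset of $\mathbb{A}^n$, hence contained in the zero locus of some nonzero $g\in k[{\bf t}]$.

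Now apply Theorem~\ref{HIT2} to the pair $(f,g)$: for all but $O\bigl(X^{n-\frac12}\log X\bigr)$ points ${\bf i}\in[-X,X]^n\cap\mathbb{Z}^n$, and all but $O(\sqrt X)$ such points when $n=1$, the specialization $f(x,{\bf i})$ is irreducible over $k$ and $g({\bf i})\neq0$. Fix such an ${\bf i}$. From $g({\bf i})\neq0$ we get ${\bf i}\in U$, hence ${\bf i}\notin Z$; moreover $f(x,{\bf i})$ has degree $d$ in $x$ and is irreducible over $k$, so the fiber $\psi^{-1}({\bf i})=\mathrm{Spec}\,\bigl(k[x]/(f(x,{\bf i}))\bigr)$ is a single closed point with residue field of degree $d$ over $k$. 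Transporting this across the identification in (ii) shows $\phi^{-1}({\bf i})$ is also a single point; in particular it is nonempty (so $P_{\bf i}$ is defined), and $[k(P_{\bf i}):k]=d=\deg\phi$, which is exactly the conclusion of Theorem~\ref{HIT}.

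The step I expect to be the main obstacle is the second one: formulating and verifying the spreading-out statements (i)--(iii) --- controlling the locus in $\mathbb{A}^n$ over which $\phi$ is undefined or fails to be finite, and the locus where the leading $x$-coefficient of $f$ degenerates --- and confirming that each of these bad loci lies in a proper closed subset of $\mathbb{A}^n$, so that they can all be absorbed into the single polynomial $g$. Once the model $\psi\colon W\to\mathbb{A}^n$ faithfully represents $\phi$ over $U$, the remainder is the direct translation above together with one application of Theorem~\ref{HIT2}.
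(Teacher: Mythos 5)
Your proposal is correct, and it follows the route the paper itself intends: the paper states Theorem \ref{HIT} merely as an ``equivalent geometric formulation'' of Theorem \ref{HIT2} (deferring details to Serre and Cohen), and your reduction---primitive element, an irreducible hypersurface model $f\in k[x,\mathbf{t}]$ with $\deg_x f=\deg\phi$, spreading out over an open $U$ whose complement (together with $Z$ and the vanishing of the leading coefficient) is absorbed into the auxiliary polynomial $g$---is exactly that standard equivalence. In particular you correctly handle the arbitrariness of the choice $P_{\bf i}\in\phi^{-1}({\bf i})$ by noting that for good ${\bf i}$ the whole fiber is a single closed point of degree $\deg\phi$, so the argument is complete once the routine spreading-out facts (images of the exceptional loci have dimension $<n$) are written down.
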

For more general and precise statements of Hilbert's Irreducibility Theorem, see \cite[Ch. 9]{Ser} and \cite{Coh2}.
\begin{remark}
\label{Hrem}
Schinzel \cite{Schi} has shown that there exist arithmetic progressions $I_1,\ldots, I_n$ such that the conclusions of Theorems \ref{HIT2} and \ref{HIT} hold, without exception, for arbitrarily chosen elements $i_j\in I_j$, $j=1,\ldots, n$, where ${\bf i}=(i_1,\ldots, i_n)$.
\end{remark}
\begin{remark}
\label{effrem}
Cohen \cite{Coh2} has shown, furthermore, that the implied constants in Theorems \ref{HIT2} and \ref{HIT} can be effectively determined, as well as the arithmetic progressions in Remark \ref{Hrem}.  
\end{remark}
A very natural problem that arises in connection with Hilbert's Irreducibility Theorem is to determine the number of distinct number fields $\mathbb{Q}(P_{{\bf i}})$ with $H({\bf i})\leq X$.  For $n=1$ this has been studied by Dvornicich and Zannier.  Let $C$ be a curve and let $\phi:C\to \mathbb{P}^1$ be a morphism.  For each integer $i\in \mathbb{A}^1(\mathbb{Z})\subset \mathbb{P}^1(\mathbb{Q})$, let $P_i\in \phi^{-1}(i)$.  Dvornicich and Zannier \cite{Zan, Zan2} studied the degree of the field extension $\mathbb{Q}(P_1,\ldots,P_N)$.  Their results imply in particular a useful result on the number of isomorphism classes of number fields in the set $\{\mathbb{Q}(P_1),\ldots,\mathbb{Q}(P_N)\}$.
\begin{theorem}[Dvornicich, Zannier]
\label{Zan}
Let $C$ be a curve over a number field $k$.  Let $\phi:C\to\mathbb{P}^1$ be a morphism with $\deg \phi>1$.  For each integer $i$, let $P_i\in \phi^{-1}(i)$.  Let $g(N)$ denote the number of isomorphism classes of number fields in the set $\{\mathbb{Q}(P_1),\ldots,\mathbb{Q}(P_N)\}$.  Then $g(N)\gg \frac{N}{\log N}$.
\end{theorem}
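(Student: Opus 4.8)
I would prove this by combining the Dvornicich--Zannier lower bound for the degree of the compositum $\mathbb{Q}(P_1,\ldots,P_N)$ with the elementary remark that a compositum of few number fields of bounded degree has bounded degree. Put $d=\deg\phi\geq 2$ and note that $[\mathbb{Q}(P_n):\mathbb{Q}]\leq[k(P_n):\mathbb{Q}]\leq d[k:\mathbb{Q}]=:D$ for every $n$, so all of the fields $\mathbb{Q}(P_n)$ have degree at most $D$ over $\mathbb{Q}$. The input I would use is the Dvornicich--Zannier estimate \cite{Zan,Zan2}
\[
[\mathbb{Q}(P_1,\ldots,P_N):\mathbb{Q}]\ \geq\ \exp\!\left(c\,\frac{N}{\log N}\right),
\]
valid for some constant $c=c(C,\phi,k)>0$ and all large $N$; their argument in fact exhibits a contribution of roughly $\prod_{p\leq N}(\text{local degree of }\mathbb{Q}(P_p)/\mathbb{Q}\text{ at }p)$, which is of this size because $\pi(N)\asymp N/\log N$. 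Granting it, pick representatives $L_1,\ldots,L_{g(N)}$ of the isomorphism classes occurring among $\mathbb{Q}(P_1),\ldots,\mathbb{Q}(P_N)$ and let $\tilde L_j$ be the Galois closure of $L_j$, so $[\tilde L_j:\mathbb{Q}]\leq D!$. Since any subfield of $\bar{\mathbb{Q}}$ isomorphic to $L_j$ is a conjugate of $L_j$, hence lies in $\tilde L_j$, we get $\mathbb{Q}(P_1,\ldots,P_N)\subseteq\tilde L_1\cdots\tilde L_{g(N)}$ and therefore $[\mathbb{Q}(P_1,\ldots,P_N):\mathbb{Q}]\leq(D!)^{g(N)}$. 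Comparing with the displayed bound gives $g(N)\geq\frac{c}{\log(D!)}\cdot\frac{N}{\log N}\gg\frac{N}{\log N}$.

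The substance, of course, is the degree bound itself, which I would establish through ramification. Choose a model $f(x,t)\in k[x,t]$ for $\phi$ which is monic of degree $d$ in $x$; its $x$-discriminant is a nonzero polynomial $\beta(t)\in k[t]$ whose roots are, up to bad primes, the finite branch points of $\phi$. For an integer $n$ with $\beta(n)\neq 0$ and away from the finitely many primes of bad reduction, a rational prime $\ell$ at which $\ord_\ell\beta(n)$ is not a multiple of the corresponding ramification index forces $\mathbb{Q}(P_n)$ to ramify at $\ell$; on the other hand, by the Geometric Hilbert Irreducibility Theorem (Theorem \ref{HIT}), $[k(P_n):k]=d$ --- whence $[\mathbb{Q}(P_n):\mathbb{Q}]\geq d\geq 2$ --- for all but $O(\sqrt N)$ of the $n\leq N$. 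I would then build greedily a set $S\subseteq\{1,\ldots,N\}$ with $|S|\gg N/\log N$ such that the fields $\{\mathbb{Q}(P_n)\}_{n\in S}$ are all nontrivial and ramify over pairwise disjoint sets of rational primes; linear disjointness at these ramified primes gives $[\,\prod_{n\in S}\mathbb{Q}(P_n):\mathbb{Q}\,]\geq 2^{|S|}$, which is of the required size. The appearance of $N/\log N$ rather than $N$ here is forced by a sieve estimate: the ``new'' ramified prime attached to $n$ divides $\beta(n)$, and a harmonic-type sum $\sum_{m\leq N}1/m\asymp\log N$ controls how often such primes must repeat.

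The main obstacle is precisely this ramification-and-counting step: one must show, for a positive proportion of $n\leq N$, that $\mathbb{Q}(P_n)$ genuinely acquires a new ramified prime, while controlling bad reduction, branch points that are irrational or lie at infinity, and --- most delicately --- the case where $C$ has genus $0$. In that case $C$ carries infinitely many $\mathbb{Q}$-points, so $\mathbb{Q}(P_n)$ can degenerate to $\mathbb{Q}$ for many $n$ (already for $\phi(t)=t^2$, where $\mathbb{Q}(P_n)=\mathbb{Q}$ exactly for the $\asymp\sqrt N$ perfect squares $n\leq N$); the greedy selection must then be organized around the squarefree, or $d$-free, part of $\beta(n)$, and it is exactly the resulting divisor sum that produces the logarithm. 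An alternative, more self-contained route replaces the compositum by a second-moment argument: Cauchy--Schwarz gives $g(N)\geq N^2\big/\#\{(m,n)\in[1,N]^2:\mathbb{Q}(P_m)\cong\mathbb{Q}(P_n)\}$, and the relation $\mathbb{Q}(P_m)\cong\mathbb{Q}(P_n)$ forces a bounded-degree point of $C\times C$ lying over $(m,n)$ of height $\ll\log N$, so the count reduces to estimating low-height points of bounded degree on $C\times C$; this runs into the same genus-$0$ and genus-$1$ difficulties, with the bound $\#\{(m,n)\}\ll N\log N$ again emerging from a divisor-type sum.
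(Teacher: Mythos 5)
Your first paragraph is exactly the paper's (implicit) argument: the paper does not prove this theorem but cites Dvornicich--Zannier's lower bound $\exp(cN/\log N)$ for $[\mathbb{Q}(P_1,\ldots,P_N):\mathbb{Q}]$ and deduces the count of isomorphism classes from the fact that the compositum of $g(N)$ fields of bounded degree (via their Galois closures) has degree at most $(D!)^{g(N)}$, just as you do. Your remaining paragraphs merely sketch Dvornicich--Zannier's own ramification proof of the compositum bound, which the paper takes as a citation, so the proposal is correct and follows essentially the same route.
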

An analysis of the proof in \cite{Zan} shows that furthermore the implied constant in Theorem \ref{Zan} is effective (for this one makes use of effective versions of Hilbert's Irreducibility Theorem (Remark \ref{effrem}) and the prime number theorem).

\section{Theorems and Proofs}

Before stating our main theorem, we introduce a little more notation.  If $S$ is a set of places of a number field $k$, we let $S_{\rm{fin}}$ denote the set of finite places in $S$.  If $L$ is a finite extension of $k$ and $S_L$ is the set of places of $L$ lying above places of $S$, we will use $\co_{L,S}$ to mean $\co_{L,S_L}$, the ring of $S_L$-integers of $L$.  If $G$ is an abelian group, we let $\rk G$ denote the free rank of $G$ as a $\mathbb{Z}$-module.

\begin{theorem}
\label{thmain}
Let $V$ be a variety over $\mathbb{Q}$ of dimension $l$.  Let $\phi:V\to \mathbb{A}^l$ be a generically finite rational map with $\deg \phi = n>1$.  Let ${\bf I}\subset \phi(V)$ be a subset of $\mathbb{A}^l(\mathbb{Z})$ and let ${\bf I}(N)=\{{\bf i}\in {\bf I}\mid H({\bf i})<N\}$.  For ${\bf i}\in {\bf I}$, let $P_{\bf i}\in \phi^{-1}({\bf i})$.  Let $m>1$ be a positive integer.  Let $f_1,\ldots, f_r\in \mathbb{Q}(V)$ be rational functions on $V$ such that
\begin{equation}
\label{cond1}
\left[\Qbar(V)\left(\sqrt[m]{f_1},\ldots,\sqrt[m]{f_r}\right):\Qbar(V)\right]=m^r.
\end{equation}
Let $S$ be a finite set of places of $\mathbb{Q}$ such that for all ${\bf i}\in {\bf I}$ and all $j$,
\begin{equation}
\label{cond2}
f_j(P_{\bf i})\co_{\mathbb{Q}(P_{\bf i}),S}=\mathfrak{a}_{{\bf i},j}^m
\end{equation}
for some fractional $\co_{\mathbb{Q}(P),S}$-ideal $\mathfrak{a}_{{\bf i},j}$.  
Then for all but $O(N^{l-\frac{1}{2}}\log N)$ (or $O(\sqrt{N})$ if $l=1$) points ${\bf i}\in {\bf I}(N)$, we have $[\mathbb{Q}(P_{\bf i}):\mathbb{Q}]=n$ and
\begin{equation}
\label{maineq2}
\rk_{m} \Cl(\mathbb{Q}(P_{\bf i}))\geq r +\#S_{\rm{fin}}-\rk \co_{\mathbb{Q}(P_{\bf i}),S}^*.
\end{equation}
\end{theorem}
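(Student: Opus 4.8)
The plan is to build, from the data $(V,\phi,f_1,\dots,f_r)$, an explicit unramified cover of $V$ and apply the Geometric Hilbert Irreducibility Theorem (Theorem~\ref{HIT}) to control the field of definition of the fibers, then translate the resulting field-degree lower bound into a class-group rank bound via Kummer theory over $\mathbb{Q}(P_{\bf i})$. Concretely, let $W$ be the variety over $\mathbb{Q}$ with function field $\mathbb{Q}(V)(\sqrt[m]{f_1},\dots,\sqrt[m]{f_r})$, so that there is a natural generically finite rational map $\psi:W\to V$, and hence a generically finite rational map $\phi\circ\psi:W\to\mathbb{A}^l$. Condition \eqref{cond1} says that over $\Qbar(V)$ — hence over $\mathbb{Q}(V)$ — this cover has degree exactly $m^r$; in particular $\phi\circ\psi$ has degree $n\cdot m^r$ as a map of $\mathbb{Q}$-varieties. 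Applying Theorem~\ref{HIT} to $\phi:V\to\mathbb{A}^l$ (with $Z$ chosen to contain the loci where things degenerate) gives $[\mathbb{Q}(P_{\bf i}):\mathbb{Q}]=n$ for all but $O(N^{l-1/2}\log N)$ points ${\bf i}\in{\bf I}(N)$; applying it to $\phi\circ\psi$ gives $[\mathbb{Q}(Q_{\bf i}):\mathbb{Q}]=n\cdot m^r$ for a point $Q_{\bf i}$ above ${\bf i}$, again outside an exceptional set of the same size. Taking the union of the two exceptional sets (and the sets where $P_{\bf i}$ or $Q_{\bf i}$ is undefined or lands in bad loci), we may assume both hold simultaneously.

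Next I would compare $[\mathbb{Q}(Q_{\bf i}):\mathbb{Q}(P_{\bf i})]$ with a Kummer-theoretic quantity attached to $\mathbb{Q}(P_{\bf i})$. Since $Q_{\bf i}$ is a point above $P_{\bf i}$ in the cover $W\to V$ obtained by adjoining $m$-th roots of the $f_j$, we have $\mathbb{Q}(Q_{\bf i})\subseteq \mathbb{Q}(P_{\bf i})\bigl(\sqrt[m]{f_1(P_{\bf i})},\dots,\sqrt[m]{f_r(P_{\bf i})}\bigr)$, and the two field-degree computations above force
\begin{equation*}
\left[\mathbb{Q}(P_{\bf i})\bigl(\sqrt[m]{f_1(P_{\bf i})},\dots,\sqrt[m]{f_r(P_{\bf i})}\bigr):\mathbb{Q}(P_{\bf i})\right]=m^r
\end{equation*}
for the non-exceptional ${\bf i}$ (the degree is at most $m^r$ always, and at least $[\mathbb{Q}(Q_{\bf i}):\mathbb{Q}(P_{\bf i})]=nm^r/n=m^r$). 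So the classes of $f_1(P_{\bf i}),\dots,f_r(P_{\bf i})$ are independent in $\mathbb{Q}(P_{\bf i})^*/(\mathbb{Q}(P_{\bf i})^*)^m$ — really I want independence in $\mathbb{Q}(P_{\bf i})^*/(\mathbb{Q}(P_{\bf i})^*)^m \cdot \langle\text{roots of unity adjustments}\rangle$, but the cleanest formulation is simply that the subgroup they generate in $k^*/(k^*)^m$ has order $m^r$, where $k=\mathbb{Q}(P_{\bf i})$.

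Finally I would feed condition \eqref{cond2} into the standard exact sequence relating $S$-units, $S$-ideal classes, and $m$-th powers. For $k=\mathbb{Q}(P_{\bf i})$, each $f_j(P_{\bf i})$ generates an $m$-th power of an $S$-ideal $\mathfrak{a}_{{\bf i},j}$, so the class of $\mathfrak{a}_{{\bf i},j}$ in $\Cl_S(k)$ is an $m$-torsion element, and the assignment $f_j(P_{\bf i})\mapsto [\mathfrak{a}_{{\bf i},j}]$ gives a homomorphism from the order-$m^r$ subgroup $\Gamma\subseteq k^*/(k^*)^m$ generated by the $f_j(P_{\bf i})$ to $\Cl_S(k)[m]$, whose kernel is precisely the image of $\co_{k,S}^*/(\co_{k,S}^*)^m\cap\Gamma$. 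Counting: $\#\Gamma=m^r$, the kernel has size at most $\#\bigl(\co_{k,S}^*/(\co_{k,S}^*)^m\bigr)=m^{\,\rk\co_{k,S}^*+\,(\text{torsion contribution})}$, so $\rk_m\Cl_S(k)\geq r-\rk\co_{k,S}^*$ up to the root-of-unity term, which at worst costs a bounded amount one absorbs (or handles by noting $\mu_m\subseteq k$ only for finitely many ${\bf i}$, or by being slightly more careful with $\co_{k,S}^*$ versus its free part). Then I would pass from $\Cl_S(k)$ back to $\Cl(k)$ using the surjection $\Cl_S(k)\twoheadrightarrow \Cl(k)/\langle\text{classes of primes in }S\rangle$... no — the clean direction is: there is a surjection $\Cl(k)\twoheadrightarrow\Cl_S(k)$ with kernel generated by the $\leq \#S_{k,\mathrm{fin}}$ classes of primes above $S$, so $\rk_m\Cl(k)\geq \rk_m\Cl_S(k)-(\text{something})$; to get the $+\#S_{\rm fin}$ in \eqref{maineq2} I instead keep track of the primes of $k$ above $S$ directly, enlarging $\Gamma$ by the classes $[\mathfrak{p}]$ for $\mathfrak{p}\mid S$ and redoing the count, which is exactly where the $\#S_{\rm fin}$ enters. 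The main obstacle is this bookkeeping at the end — getting the exact constant $r+\#S_{\rm fin}-\rk\co^*_{k,S}$ rather than something off by the number of primes of $k$ above $S$ or by a root-of-unity factor — together with making sure the finitely-many bad ${\bf i}$ (where $f_j(P_{\bf i})=0$ or $\infty$, or where the cover $W\to V$ behaves badly) are subsumed into the $O(N^{l-1/2}\log N)$ error term.
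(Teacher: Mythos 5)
Your skeleton is the same as the paper's (form the cover with function field $\mathbb{Q}(V)(\sqrt[m]{f_1},\ldots,\sqrt[m]{f_r})$, apply Hilbert irreducibility to the composite map of degree $nm^r$, and convert the degree statement into class-group information via \eqref{cond2}, with the unit group as the loss term), but the two items you defer as ``bookkeeping'' are exactly where the inequality \eqref{maineq2} is earned, and the fixes you sketch do not close them. First, the Kummer translation over $k=\mathbb{Q}(P_{\bf i})$ is delicate because $\mu_m\not\subset k$ in general: passing from $[\mathbb{Q}(Q_{\bf i}):k]=m^r$ to independence of the $f_j(P_{\bf i})$ in $k^*/(k^*)^m$ is not justified as stated, and, more seriously, even granting a subgroup $\Gamma\cong(\mathbb{Z}/m\mathbb{Z})^r$ mapping to $\Cl(\co_{k,S})$ with kernel inside the image of $\co_{k,S}^*$, a bound on the \emph{order} of the kernel does not control $\rk_m$ of the image when $m$ is composite (a quotient of $(\mathbb{Z}/4\mathbb{Z})^2$ by a subgroup of order $4$ can be $(\mathbb{Z}/2\mathbb{Z})^2$, of $4$-rank zero); and ``absorbing'' the root-of-unity contribution loses the exact constant. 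The paper's Lemma \ref{ldeg} handles both points at once: it fixes a prime $p\mid m$ with $\rk_{p^{\ord_p m}}\Cl(\co_{k,S})=\rk_m\Cl(\co_{k,S})$, shows each $x_j^{m/p}$ lies in $M=k(\sqrt[p]{\beta_1},\ldots,\sqrt[p]{\beta_t},\sqrt[p]{u_1},\ldots,\sqrt[p]{u_{t'}},\sqrt[p]{\zeta})$, and thereby gets an \emph{upper} bound on the covering degree in terms of $\rk_m\Cl(\co_{k,S})$; the outer proof then runs Hilbert irreducibility over a fixed number field $M$ containing $\sqrt[m]{\zeta}$ for every root of unity that can occur in a degree $\leq n$ field, so the root-of-unity factor is trivialized rather than estimated, and comparing with $\deg\phi\circ\pi=nm^r$ forces the exponent to be nonnegative.

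Second, the term $+\#S_{\rm fin}$ does not come from enlarging $\Gamma$ by the classes of the primes of $k$ above $S$: there can be as many as $n\#S_{\rm fin}$ such primes, their classes need not be $m$-torsion, and redoing your count with them does not yield $\#S_{\rm fin}$. In the paper it comes from a different mechanism: each rational prime $q\in S_{\rm fin}$ is an $S$-unit of $k$, so its $p$-th roots already lie in the field $M$ above, while on the other hand these roots generate $L'=\mathbb{Q}(\{\sqrt[p]{q}\mid q\in S_{\rm fin}\})$ of degree $p^{\#S_{\rm fin}}$ over $\mathbb{Q}$; working over the fixed base $L=\mathbb{Q}(\{\sqrt[m]{q}\mid q\in S_{\rm fin}\})\supseteq L'$ therefore saves exactly a factor $p^{\#S_{\rm fin}}$ in the degree bound. (Your hesitation about $\Cl$ versus $\Cl_S$ is a non-issue: $\Cl(\co_{k,S})$ is a quotient of $\Cl(k)$, so $m$-ranks can only drop, and the paper in fact proves the stronger $S$-class-group inequality.) As written, your argument would at best give something like $\rk_m\Cl(\mathbb{Q}(P_{\bf i}))\geq r-\rk\co_{\mathbb{Q}(P_{\bf i}),S}^*-1$ for prime $m$, missing both the $\#S_{\rm fin}$ term and the composite-$m$ case; the deferred constant-chasing is precisely the mathematical content of Lemma \ref{ldeg}.
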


\begin{proof}
Associated to the field extension $\mathbb{Q}(V)\left(\sqrt[m]{f_1},\ldots,\sqrt[m]{f_r}\right)$ of $\mathbb{Q}(V)$, we have a projective variety $X$ over $\mathbb{Q}$ (unique up to birational equivalence) and a morphism $\pi:X\to V$ with $\deg \pi=m^r$. 

\begin{lemma}
\label{ldeg}
Let $Z\subset V$ be the set of poles of the functions $f_1,\ldots, f_r$.  Let $P_{\bf i}\in V(\Qbar)\setminus Z$, for some ${\bf i}\in {\bf I}$.  Let $\zeta$ be a generator for the group of roots of unity in $k=\mathbb{Q}(P_{\bf i})$.  Let $L=\mathbb{Q}(\{\sqrt[m]{q}\mid q\in S_{\rm{fin}}\})$.  Let $Q_{\bf i}\in \pi^{-1}(P_{\bf i})$.  Then for some prime $p$ dividing $m$,
\begin{equation}
\label{degineq}
[L(Q_{\bf i}):L]\leq \left[L\left(\sqrt[p]{\zeta}\right):L\right][k:\mathbb{Q}]m^r p^{\rk_m \Cl(k)+\rk \co_{k,S}^*-r-\#S_{\rm{fin}}}.
\end{equation}
\end{lemma}

\begin{proof}
We actually prove a stronger inequality, with $\rk_m \Cl(k)$ replaced by $\rk_m \Cl(\co_{k,S})$ (the class group of $\co_{k,S}$) in \eqref{degineq}.  We will work throughout with (fractional) $\co_{k,S}$-ideals.  Let $\ord_p m$ denote the largest power of $p$ dividing $m$.  Let $p$ be a prime dividing $m$ such that $\rk_{p^{\ord_p m}}\Cl(\co_{k,S})=\rk_m \Cl(\co_{k,S})=t$.  Let $G=\left\{[\mathfrak{a}]^\frac{m}{p}\mid [\mathfrak{a}]\in\Cl(\co_{k,S}), [\mathfrak{a}]^m=1 \right\}$, a subgroup of $\Cl(\co_{k,S})$.  Clearly, $G\cong (\mathbb{Z}/p\mathbb{Z})^{t}$.  Let $\mathfrak{b}_j$, $j=1,\ldots, t$, be ($\co_{k,S}$-)ideals whose ideal classes generate $G$.  Then for each $j$, $\mathfrak{b}_j^p=(\beta_j)$ for some $\beta_j\in k$.  Let $t'=\rk \co_{k,S}^*$.  Let $u_1,\ldots, u_{t'},\zeta$ be generators for $\co_{k,S}^*$.  Let $M=k(\sqrt[p]{\beta_1},\ldots,\sqrt[p]{\beta_{t}},\sqrt[p]{u_1},\ldots,\sqrt[p]{u_{t'}},\sqrt[p]{\zeta})$.  Let $L'=\mathbb{Q}(\{\sqrt[p]{q}\mid q\in S_{\rm{fin}}\})$.  Note that $L'\subset L\cap M$, $[L':\mathbb{Q}]=p^{\#S_{\rm{fin}}}$, and 
\begin{equation*}
[LM:L]\leq [L(\sqrt[p]{\zeta}):L][k:\mathbb{Q}]p^{t+t'}/[L':\mathbb{Q}]=[L(\sqrt[p]{\zeta}):L][k:\mathbb{Q}]p^{t+t'-\#S_{\rm{fin}}}.
\end{equation*}
From the definitions of $\pi$ and $X$, $k(Q_{\bf i})=k(x_1,\ldots, x_r)$ for some choice of $x_j$ satisfying $x_j^m=f_j(P_{\bf i})$, $j=1,\ldots, r$.  It suffices to show that $x_j^{\frac{m}{p}}\in M$ for all $j$.  Indeed, this gives immediately that 
\begin{equation*}
[L(Q_{\bf i}):L]\leq \left(\frac{m}{p}\right)^r [LM:L]\leq \left[L\left(\sqrt[p]{\zeta}\right):L\right][k:\mathbb{Q}]m^r p^{t+t'-r-\#S_{\rm{fin}}}.
\end{equation*}
By assumption, $(x_j^m)=(f_j(P_{\bf i}))=\mathfrak{a}_j^m$ for some $\co_{k,S}$-ideal $\mathfrak{a}_j$.  Since $[\mathfrak{a}_j]^{\frac{m}{p}}\in G$, 
\begin{equation*}
\mathfrak{a}_j^{\frac{m}{p}}=(\alpha)\prod_{s=1}^{t}\mathfrak{b}_s^{c_s}
\end{equation*}
for some integers $c_s$ and some element $\alpha\in k$.  Therefore,
\begin{equation*}
(x_j^m)=\left(\mathfrak{a}_j^\frac{m}{p}\right)^p=(\alpha^p)\prod_{s=1}^{t}\left(\beta_s^{c_s}\right).
\end{equation*}
So $x_j^m=u\alpha^p\prod_{s=1}^{t}\beta_s^{c_s}$ for some unit $u\in \co_{k,S}^*$. Therefore,  $x_j^{\frac{m}{p}}=\alpha\sqrt[p]{u}\prod_{s=1}^{t}\sqrt[p]{\beta_s^{c_s}}$ for some choice of the $p$-th roots.  So $x_j^{\frac{m}{p}}\in M$ for all $j$ as desired.
\end{proof}

For ${\bf i}\in {\bf I}$, let $Q_{\bf i}\in \pi^{-1}(P_{\bf i})$.  So $\phi(\pi(Q_{\bf i}))={\bf i}\in \mathbb{A}^l(\mathbb{Z})$.  Let $L$ be the number field from Lemma \ref{ldeg}.  Let $M$ be the number field generated over $L$ by elements $\sqrt[m]{\zeta}$, where $\zeta$ is any root of unity appearing in a number field $k$ with $[k:\mathbb{Q}]\leq n$.  Let $W$ be a proper closed subset of $\mathbb{A}^l$ containing $\phi(Z)$.  By \eqref{degineq}, for all ${\bf i}\in {\bf I}\setminus W$,
\begin{equation*}
[M(Q_{\bf i}):M]\leq n m^r p^{\rk_m \Cl(\mathbb{Q}(P_{\bf i}))+\rk \co_{\mathbb{Q}(P_{\bf i}),S}^*-r-\#S_{\rm{fin}}}.
\end{equation*}
On the other hand, $\deg \phi\circ\pi=nm^r$.  Therefore, by Hilbert's Irreducibility Theorem over $M$, we obtain that for all but $O\left(N^{l-\frac{1}{2}}\log N\right)$ (or $O(\sqrt{N})$ if $l=1$) points ${\bf i}\in {\bf I}(N)$, we have $[\mathbb{Q}(P_{\bf i}):\mathbb{Q}]=n$ and $\rk_m \Cl(\mathbb{Q}(P_{\bf i}))\geq r+\#S_{\rm{fin}}-\rk \co_{\mathbb{Q}(P_{\bf i}),S}^*$.
\end{proof}

We will frequently apply Theorem \ref{thmain} to sets ${\bf I}$ and $S$ such that for every ${\bf i}\in {\bf I}$, there exists exactly one place of $\mathbb{Q}(P_{\bf i})$ above each finite place of $S$ (in particular, this holds if $S=\{\infty\}$, the archimedean place of $\mathbb{Q}$).  Note that in this case, \eqref{maineq2} simplifies to
\begin{equation*}
\rk_{m} \Cl(\mathbb{Q}(P_{\bf i}))\geq r -\rk \co_{\mathbb{Q}(P_{\bf i})}^*.
\end{equation*}

The most natural examples of functions $f_1,\ldots, f_r$ satisfying \eqref{cond1} and \eqref{cond2} arise from torsion in the Picard group (group of isomorphism classes of invertible sheaves) of a variety.  If $V$ is a nonsingular  variety over $\mathbb{Q}$, we can identify $\Pic(V)$ with the group of ($\mathbb{Q}$-rational) divisors on $V$ modulo linear equivalence.  We denote the linear equivalence class of a divisor $D$ by $[D]$.  If $f$ is a rational function on $V$, we let $(f)$ be the principal divisor associated to $f$.

\begin{corollary}
\label{cormain}
Let $V$ be a nonsingular projective variety over $\mathbb{Q}$ of dimension $l$.  Let $\phi:V\to \mathbb{A}^l$ be a generically finite rational map with $\deg \phi = n>1$.  Let ${\bf I}=\{P\in \mathbb{A}^l(\mathbb{Z})\mid P\in\phi(V)\}$ and let ${\bf I}(N)=\{{\bf i}\in {\bf I}\mid H({\bf i})<N\}$.  For ${\bf i}\in {\bf I}$, let $P_{\bf i}\in \phi^{-1}({\bf i})$.  Let $m>1$ be a positive integer.  
Then there exists a finite set of places $S$ of $\mathbb{Q}$, depending only on $V$ and $m$ and not $\phi$, such that for all but $O(N^{l-\frac{1}{2}}\log N)$ (or $O(\sqrt{N})$ if $l=1$) points ${\bf i}\in {\bf I}(N)$, we have $[\mathbb{Q}(P_{\bf i}):\mathbb{Q}]=n$ and
\begin{equation}
\label{coreq}
\rk_{m} \Cl(\mathbb{Q}(P_{\bf i}))\geq \rk_m \Pic(V)_{\rm{tors}} +\#S_{\rm{fin}}-\rk \co_{\mathbb{Q}(P_{\bf i}),S}^*.
\end{equation}
\end{corollary}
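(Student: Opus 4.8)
The plan is to deduce Corollary~\ref{cormain} from Theorem~\ref{thmain} by extracting, from the $m$-torsion of $\Pic(V)$, a system of rational functions $f_1,\dots,f_r$ with $r=\rk_m\Pic(V)_{\rm{tors}}$ that verify \eqref{cond1} and \eqref{cond2}. Since $V$ is nonsingular and projective, $\Pic(V)$ is finitely generated, so $\Pic(V)_{\rm{tors}}$ is finite; choose $\mathbb{Q}$-rational divisors $D_1,\dots,D_r$ on $V$ whose classes generate a subgroup of $\Pic(V)_{\rm{tors}}$ isomorphic to $(\mathbb{Z}/m\mathbb{Z})^r$, and, using $m[D_j]=0$, write $mD_j=(f_j)$ for suitable $f_j\in\mathbb{Q}(V)^*$. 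Put $Z=\bigcup_j\Supp D_j$, the set of zeros and poles of the $f_j$; note that $Z$, the $f_j$, and everything constructed below depend only on $V$ and $m$, not on $\phi$.

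For \eqref{cond1}: since $\Qbar$ contains the $m$-th roots of unity, Kummer theory identifies $[\Qbar(V)(\sqrt[m]{f_1},\dots,\sqrt[m]{f_r}):\Qbar(V)]$ with the order of the subgroup of $\Qbar(V)^*/(\Qbar(V)^*)^m$ generated by the $f_j$. If this order were $<m^r$ there would be integers $a_1,\dots,a_r$, not all divisible by $m$, and $h\in\Qbar(V)^*$ with $\prod_j f_j^{a_j}=h^m$; taking divisors on $V_{\Qbar}=V\times_\mathbb{Q}\Qbar$, where $\Div(V_{\Qbar})$ is torsion free, gives $\sum_j a_jD_j=(h)$. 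As the $D_j$ are $\mathbb{Q}$-rational, $(h^\sigma)=(h)$ for all $\sigma\in\Gal(\Qbar/\mathbb{Q})$, so $h^\sigma/h$ is a constant in $\Qbar^*$ (the only units on the proper geometrically integral $V_{\Qbar}$), and Hilbert's Theorem~90 yields $c\in\Qbar^*$ with $h/c\in\mathbb{Q}(V)^*$ and $(h/c)=\sum_j a_jD_j$. Then $\sum_j a_j[D_j]=0$ in $\Pic(V)$, contradicting the $\mathbb{Z}/m\mathbb{Z}$-independence of the $[D_j]$. Hence \eqref{cond1} holds.

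For \eqref{cond2} I would spread $V$ out over a ring of $S$-integers. By standard spreading-out arguments, and because $V$ is smooth over $\mathbb{Q}$ with $(f_j)=mD_j$ on $V$, there is a finite set of places $S$ of $\mathbb{Q}$, which we take to contain $\infty$ and the primes dividing $m$, together with a model $\mathcal{V}$ of $V$ that is proper and smooth, hence regular, over $\co_S$, and on which $(f_j)=m\overline{D_j}$ for all $j$, where $\overline{D_j}$ is the closure of $D_j$ in $\mathcal{V}$: take any proper model, invert the finitely many primes over which it is not smooth (its non-smooth locus is closed and disjoint from the generic fibre $V$), and invert the finitely many primes over which the vertical divisors $(f_j)-m\overline{D_j}$ are supported. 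This $S$ is built only from $V$, $m$, and the chosen $D_j,f_j$, so it does not depend on $\phi$. Now let $P\in V(\Qbar)\setminus Z$, put $k=\mathbb{Q}(P)$, and view $P$ as a closed point of $V$ with residue field $k$; by the valuative criterion of properness, applied at each prime of $\co_{k,S}$, the corresponding $k$-point of $\mathcal{V}$ extends to an $\co_{k,S}$-point $\tilde P$ of $\mathcal{V}$. Given a prime $\mathfrak{p}$ of $\co_{k,S}$, let $x$ be its image in $\mathcal{V}$ under $\tilde P$; since $\mathcal{V}$ is regular, $\co_{\mathcal{V},x}$ is a unique factorization domain, and the relation $(f_j)=m\overline{D_j}$ near $x$ lets us write $f_j=u\prod_i\pi_i^{me_i}$ with $u$ a unit of $\co_{\mathcal{V},x}$, the $\pi_i$ primes, and $e_i\in\mathbb{Z}$. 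As $P\notin Z$, none of the $\pi_i$ maps to $0$ under $\tilde P$, so pulling back gives $\ord_{\mathfrak{p}}(f_j(P))=m\sum_i e_i\,\ord_{\mathfrak{p}}(\tilde P^*\pi_i)\equiv 0\pmod m$. Hence $f_j(P)\co_{k,S}$ is the $m$-th power of a fractional $\co_{k,S}$-ideal, which is \eqref{cond2}.

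Finally, apply Theorem~\ref{thmain} with these $f_j$ and $S$ to ${\bf I}'=\{{\bf i}\in{\bf I}\mid P_{\bf i}\notin Z\}$: for all but $O(N^{l-\frac{1}{2}}\log N)$ (or $O(\sqrt N)$ if $l=1$) points ${\bf i}\in{\bf I}'(N)$ one obtains $[\mathbb{Q}(P_{\bf i}):\mathbb{Q}]=n$ and \eqref{maineq2}, which, with $r=\rk_m\Pic(V)_{\rm{tors}}$, is \eqref{coreq}. The omitted points lie in $\overline{\phi(Z)}\cap\mathbb{A}^l(\mathbb{Z})$, and $\overline{\phi(Z)}$ is a proper closed subset of $\mathbb{A}^l$ of dimension $<l$ since $\phi$ is generically finite and $\dim Z<l$; hence there are only $O(N^{l-1})$ of them, and they are absorbed into the error term. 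I expect the main obstacle to be exactly the uniformity claimed in the third paragraph: producing one finite $S$, depending neither on $\phi$ nor on the specialization points $P_{\bf i}$, for which every $f_j(P_{\bf i})$ generates an ideal $m$-th power. This is what forces the passage to a good proper integral model of $V$ and the use of factoriality of its local rings, so that the relation $(f_j)=m\overline{D_j}$ is preserved under pullback along an arbitrary $\co_{k,S}$-point.
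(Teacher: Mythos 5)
Your proposal is correct, and its skeleton is the same as the paper's: pick $\mathbb{Q}$-rational divisors $D_1,\dots,D_r$ whose classes generate $(\mathbb{Z}/m\mathbb{Z})^r\subset\Pic(V)_{\rm tors}$, take $f_j$ with $(f_j)=mD_j$, verify \eqref{cond1} and \eqref{cond2} with a set $S$ depending only on $V$, $m$ (and the chosen $f_j$), and feed this into Theorem \ref{thmain}. The differences are in how the two inputs are justified. For \eqref{cond1}, the paper's Lemma \ref{LKum} runs the same Kummer computation but passes directly from ``$\sum_j i_jD_j$ is principal'' to a contradiction; your extra descent step --- the relation a priori only gives principality over $\Qbar$, and you use $h^\sigma/h\in\Qbar^*$ together with Hilbert 90 to replace $h$ by a $\mathbb{Q}$-rational function, hence get triviality of $\sum_j a_j[D_j]$ in $\Pic(V)$ itself --- makes explicit a point the paper leaves implicit. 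For \eqref{cond2}, the paper's Lemma \ref{fi2} is a one-line appeal to Weil's Decomposition Theorem, whereas you give a self-contained geometric proof: spread $V$ out to a proper smooth (hence regular) model over $\co_S$ on which $(f_j)=m\overline{D_j}$, extend each point $P\notin Z$ by the valuative criterion, and use factoriality of the regular local rings to get $\ord_{\mathfrak{p}}f_j(P)\equiv 0 \pmod m$ for $\mathfrak{p}\notin S$. This costs more machinery but buys an explicit description of a viable $S$ (primes of bad reduction of the chosen model plus those supporting the vertical parts of $(f_j)-m\overline{D_j}$), which is in the spirit of what the paper does separately in Corollary \ref{thmain2}; the citation route is shorter and is all the paper needs. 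Your final bookkeeping --- restricting to ${\bf i}$ with $P_{\bf i}\notin Z$ and absorbing the $O(N^{l-1})$ integral points of $\overline{\phi(Z)}$ into the error term --- is sound and matches how the proof of Theorem \ref{thmain} itself discards a proper closed subset containing $\phi(Z)$ via Hilbert irreducibility.
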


\begin{proof}
Let $r=\rk_m \Pic(V)_{\rm{tors}}$.  We first show that \eqref{cond1} holds for certain functions $f_1,\ldots, f_r$.
\begin{lemma}
\label{LKum}
Let $D_1,\ldots, D_r$ be divisors whose divisor classes generate a subgroup $(\mathbb{Z}/m\mathbb{Z})^r\subset\Pic(V)$.  Let $f_1,\ldots, f_r\in \mathbb{Q}(V)$ be rational functions such that $(f_j)=mD_j$ for all $j$.  Then 
\begin{equation*}
\left[\Qbar(V)\left(\sqrt[m]{f_1},\ldots,\sqrt[m]{f_r}\right):\Qbar(V)\right]=m^r.
\end{equation*}
\end{lemma}
\begin{proof}
By Kummer theory, this is equivalent to showing that $f_1,\ldots, f_r$ generate a subgroup of cardinality $m^r$ in $\Qbar(V)^*/\Qbar(V)^{m*}$.  Suppose that
\begin{equation}
\label{Kum} 
f_1^{i_1}f_2^{i_2}\cdots f_r^{i_r}=g^m
\end{equation}
for some $g\in \Qbar(V)^*$ and  integers $0\leq i_1,\ldots, i_r<m$.  Let $(g)=E$, the principal divisor associated to $g$.  Then by \eqref{Kum}, $mE=\sum_{j=1}^r mi_jD_j$.  So $E=\sum_{j=1}^r i_jD_j$ is a principal divisor.  Since $[D_1],\ldots, [D_r]$ are independent in $\Pic(V)[m]$ (over $\mathbb{Z}/m\mathbb{Z}$), it follows that $i_j=0$ for all $j$.
\end{proof}
Let $f_1,\ldots, f_r$ be as in Lemma \ref{LKum}.  Since $(f_j)=mD_j$ is an ``$m$-th power" in the divisor group of $V$, the values of $f_j$ must be an $m$-th power outside of some finite set of places $S$ depending only on $f_j$.
\begin{lemma}
\label{fi2}
There exists a finite set of places $S$ of \ $\mathbb{Q}$ such that for all $P\in C(\Qbar)$ and all $j$,
\begin{equation*}
f_j(P)\co_{\mathbb{Q}(P),S}=\mathfrak{a}_{P,j}^m
\end{equation*}
for some fractional $\co_{\mathbb{Q}(P),S}$-ideal $\mathfrak{a}_{P,j}$.
\end{lemma}
\begin{proof}
This is an immediate consequence of Weil's Decomposition Theorem \cite[2.7.15]{Bom2}.
\end{proof}
Thus, \eqref{cond2} holds for a finite set of places $S$ of $\mathbb{Q}$ independent of $\phi$.  We are now done by Theorem \ref{thmain}.
\end{proof}

The main content of Corollary \ref{cormain} is that the set of places $S$ can be chosen independent of $\phi$.  This fact is sufficient to prove the following result for superelliptic curves.

\begin{corollary}
\label{super}
Let $C$ be the normalization of a plane curve defined by 
\begin{equation*}
y^n=f(x), \quad f(x)\in \mathbb{Q}[x], \quad n>1, 
\end{equation*}
with $(\deg f, n)=1$.  Let $m>1$ be an integer.  Then there exist $\gg X^{\frac{1}{(n-1)\deg f}}/\log X$ number fields $k$ of degree $[k:\mathbb{Q}]=n$ with discriminant $d_k$, $|d_k|<X$, and
\begin{equation*}
\rk_m \Cl(k)\geq \rk_m \Jac(C)(\mathbb{Q})_{\rm{tors}}-\left[\frac{n-1}{2}\right].
\end{equation*}
\end{corollary}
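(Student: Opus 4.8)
The plan is to apply Corollary~\ref{cormain} to $C$, with $\phi\colon C\to\mathbb{A}^1$ the $x$-coordinate map; this is a generically finite rational map of degree $n$, since $(\deg f,n)=1$ forces $y^n-f(x)$ to be irreducible over $\mathbb{Q}(x)$. The same coprimality ensures that the smooth model $C$ has a single point at infinity, lying totally ramified over $x=\infty$ with residue field $\mathbb{Q}$, so $C(\mathbb{Q})\neq\emptyset$ and hence $\rk_m\Pic(C)_{\mathrm{tors}}=\rk_m\Jac(C)(\mathbb{Q})_{\mathrm{tors}}=:r$. For $P_i\in\phi^{-1}(i)$ we have $\mathbb{Q}(P_i)=\mathbb{Q}(\sqrt[n]{f(i)})$, and Corollary~\ref{cormain} produces a finite set $S$ of places of $\mathbb{Q}$, depending only on $C$ and $m$, such that
\begin{equation*}
[\mathbb{Q}(P_i):\mathbb{Q}]=n\quad\text{and}\quad \rk_m\Cl(\mathbb{Q}(P_i))\geq r+\#S_{\mathrm{fin}}-\rk\co_{\mathbb{Q}(P_i),S}^*
\end{equation*}
for all but $O(\sqrt{N})$ integers $i$ with $|i|<N$. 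By Dirichlet's $S$-unit theorem, $\rk\co_{\mathbb{Q}(P_i),S}^*-\#S_{\mathrm{fin}}=(r_1+r_2-1)+\sum_{q\in S_{\mathrm{fin}}}(\#\{v\mid q\}-1)$, where $r_1,r_2$ count the real and complex places of $\mathbb{Q}(P_i)$ and $v$ ranges over its places; both summands are $\geq 0$ and $r_1+r_2-1\geq\left[\frac{n-1}{2}\right]$. Hence it is enough to produce $\gg N$ integers $i$ with $|i|<N$ for which $\mathbb{Q}(P_i)$ has the fewest possible real places and every prime of $S_{\mathrm{fin}}$ has a unique place in $\mathbb{Q}(P_i)$: for such $i$ the bound becomes $\rk_m\Cl(\mathbb{Q}(P_i))\geq r-\left[\frac{n-1}{2}\right]$, and a counting argument does the rest.

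For the archimedean condition, outside the exceptional set $x^n-f(i)$ is the minimal polynomial of $\sqrt[n]{f(i)}$; it has exactly one real root when $n$ is odd, so $r_1=1$ automatically, and no real root when $n$ is even and $f(i)<0$. Since $(\deg f,n)=1$, if $n$ is even then $\deg f$ is odd, so $f(i)<0$ for all $i$ in some half-line, forcing $r_1=0$ there. For the splitting condition at a prime $q\in S_{\mathrm{fin}}$ one needs $x^n-f(i)$ to be irreducible over $\mathbb{Q}_q$, which I would secure by fixing $i$ in an appropriate residue class modulo a power of $q$: the clean case is to arrange $\ord_q f(i)=1$, so that $x^n-f(i)$ is Eisenstein at $q$ and $q$ is totally ramified in $\mathbb{Q}(P_i)$ -- this is available whenever $f$ has a suitable root modulo $q$, which is the typical situation because $S_{\mathrm{fin}}$ may be taken to consist of primes dividing the discriminant of $f$; the remaining cases are dispatched by a direct local computation, picking $i$ so that $f(i)$ is a $q$-adic unit lying outside $(\mathbb{Q}_q^*)^p$ for every prime $p\mid n$, using the structure of $\mathbb{Q}_q^*$ modulo $p$-th powers. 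One then combines these $q$-adic congruences, over the finitely many $q\in S_{\mathrm{fin}}$, with the half-line (for $n$ even) via the Chinese Remainder Theorem, landing in an arithmetic progression $\mathcal P$ on which both conditions hold for every $i$. Pinning down the decomposition of these bad primes is the main obstacle; everything else is formal.

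To conclude, re-parametrize $\mathbb{A}^1$ by the affine automorphism carrying the integers (respectively the positive integers) onto $\mathcal P$ (respectively onto $\mathcal P$ intersected with the half-line) and apply Corollary~\ref{cormain} again -- the associated set of places is still $S$, as $S$ does not depend on the map -- so that for all but $O(\sqrt{N})$ of the $i$ in range with $|i|<N$ one has $[\mathbb{Q}(P_i):\mathbb{Q}]=n$ and $\rk_m\Cl(\mathbb{Q}(P_i))\geq r-\left[\frac{n-1}{2}\right]=\rk_m\Jac(C)(\mathbb{Q})_{\mathrm{tors}}-\left[\frac{n-1}{2}\right]$. The Dvornicich--Zannier theorem (Theorem~\ref{Zan}), applied to $C$ with this degree-$n$ map, gives $\gg N/\log N$ isomorphism classes of number fields among the $\mathbb{Q}(P_i)$ with $|i|<N$; discarding the $O(\sqrt{N})$ exceptional $i$ kills at most $O(\sqrt{N})$ of them, leaving $\gg N/\log N$ distinct fields $k$ of degree $n$ with $\rk_m\Cl(k)\geq\rk_m\Jac(C)(\mathbb{Q})_{\mathrm{tors}}-\left[\frac{n-1}{2}\right]$. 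After clearing denominators we may assume $f\in\mathbb{Z}[x]$, so $\sqrt[n]{f(i)}$ is an algebraic integer with $\Disc(\mathbb{Z}[\sqrt[n]{f(i)}])=\pm n^n f(i)^{n-1}$, whence $|d_k|\leq n^n|f(i)|^{n-1}\ll N^{(n-1)\deg f}$ when $|i|<N$. Taking $X\asymp N^{(n-1)\deg f}$ converts the bound $\gg N/\log N$ into $\gg X^{1/((n-1)\deg f)}/\log X$ fields $k$ with $|d_k|<X$, as claimed.
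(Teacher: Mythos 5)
Your overall strategy matches the paper's (apply Corollary \ref{cormain}, kill the $\#S_{\rm{fin}}$ term by forcing a unique place above each finite prime of $S$, force minimal unit rank via the sign of $f$, then count with Theorem \ref{Zan} and the discriminant bound $|d_k|\ll N^{(n-1)\deg f}$), but there is a genuine gap at the step you yourself flag as "the main obstacle": ensuring that each $q\in S_{\rm{fin}}$ has a unique place in $\mathbb{Q}(\sqrt[n]{f(i)})$. Your Eisenstein route needs $\ord_q f(i)$ to be attainable and coprime to $n$, which requires $f$ to have a suitable (essentially simple) root mod $q$; your fallback for the unramified case --- choose $i$ with $f(i)$ a $q$-adic unit outside $(\mathbb{Q}_q^*)^p$ for all $p\mid n$ --- is not always available. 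Concretely, take $n=3$ and $q=2$: cubing is an automorphism of $\mathbb{Z}_2^*$, so \emph{every} $2$-adic unit is a cube, and if $f\equiv x^2+x+1 \pmod 2$ then $f(i)$ is odd for every integer $i$, so $x^3-f(i)$ has a root in $\mathbb{Q}_2$ and $2$ splits in $\mathbb{Q}(\sqrt[3]{f(i)})$ for \emph{all} $i$; no congruence condition on integer values of the $x$-coordinate can repair this. (Also, you cannot simply assert that $S_{\rm{fin}}$ consists of primes dividing $\Disc(f)$; the $S$ of Corollary \ref{cormain} comes from Weil's decomposition theorem and is not at your disposal.) So with the $x$-coordinate map the claimed cancellation of $\#S_{\rm{fin}}$ can genuinely fail.

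The paper sidesteps this by changing the specialization map rather than imposing congruences: after rescaling (using $(\deg f,n)=1$) so that $f\in\mathbb{Z}[x]$ has leading coefficient $-1$, it sets $M=\prod_{q\in S_{\rm{fin}}}q$ and uses $\psi=x-\tfrac{1}{M}$, i.e.\ specializes at $x=\tfrac{iM+1}{M}$. Then for every $q\in S_{\rm{fin}}$ and every $i$ one has $\ord_q f\bigl(\tfrac{iM+1}{M}\bigr)=-\deg f$, which is coprime to $n$, so $q$ is \emph{totally ramified} in $\mathbb{Q}(P_i)$ unconditionally; and since the leading coefficient is $-1$, $f\bigl(\tfrac{iM+1}{M}\bigr)<0$ for $i\gg 0$, so $\rk\co_{\mathbb{Q}(P_i)}^*=\left[\frac{n-1}{2}\right]$ regardless of the parity of $n$. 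This is exactly the device your argument is missing (and it is also why Corollary \ref{cormain} is stated with $S$ independent of $\phi$: one must be free to choose $\phi$ \emph{after} seeing $S$). The rest of your proof --- irreducibility of $y^n-f(x)$, the rational point at infinity giving $\Pic(C)_{\rm tors}\cong\Jac(C)(\mathbb{Q})_{\rm tors}$, the Dvornicich--Zannier count, and the discriminant estimate --- agrees with the paper and is fine.
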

\begin{proof}
Let $t=\deg f$ and let $f=\sum_{i=0}^t a_ix^i$.  By rescaling $x$ and $y$ we can assume that $a_i\in \mathbb{Z}$ for all $i$ and, using $(t,n)=1$, that $a_t=-1$.  Let $S$ be as in Corollary \ref{cormain} (for $V=C$ and $m$).  Let $M=\prod_{p\in S_{\rm{fin}}}p$.  Let $\psi$ be the rational function on $y^n=f(x)$ defined by $\psi=x-\frac{1}{M}$.  For  $i\in \mathbb{Z}$, let $P_i\in \psi^{-1}(i)$.  Then $\mathbb{Q}(P_i)\cong \mathbb{Q}\left(\sqrt[n]{f\left(\frac{iM+1}{M}\right)}\right)$.  Let $p\in S_{\rm{fin}}$.  Since $a_t=-1$, we have that $\ord_p f\left(\frac{iM+1}{M}\right)=-t$.  This implies that $p$ totally ramifies in $\mathbb{Q}(P_i)$.  Explicitly, if $i,j\in \mathbb{N}$ is such that $ni-tj=1$, then $\left(M^i\sqrt[n]{f\left(\frac{iM+1}{M}\right)^j},p\right)^n=(p)$ in $\co_{\mathbb{Q}(P_i)}$.  Note also that $[\mathbb{Q}(P_i):\mathbb{Q}]=n$.  So every prime of $S_{\rm{fin}}$ is totally ramified in $\mathbb{Q}(P_i)$.  Let $\phi:C\to \mathbb{P}^1$ be the morphism induced by $\psi$.  The condition $(t,n)=1$ implies that $y^n=f(x)$ has a rational point at infinity.  Since $C$ has a rational point, $\Pic(C)_{\rm{tors}}\cong \Jac(C)(\mathbb{Q})_{\rm{tors}}$ (this follows, for instance, from Lemma \ref{fi}).  Corollary \ref{cormain} applied to $\phi$ and $C$ then implies that for all but $O(\sqrt{N})$ values $i=1,\ldots, N$, we have  $[\mathbb{Q}(P_i):\mathbb{Q}]=n$ and
\begin{equation*}
\rk_m \Cl(\mathbb{Q}(P_i))\geq \rk_m \Jac(C)(\mathbb{Q})_{\rm{tors}}-\rk \co_{\mathbb{Q}(P_i)}^*.
\end{equation*}
Since $a_t=-1$, for $i\gg 0$, $f\left(\frac{iM+1}{M}\right)$ is negative.  It follows that for $i\gg 0$, $\mathbb{Q}(P_i)$ has exactly one real place if $n$ is odd and no real places if $n$ is even.  So by Dirichlet's theorem, $\rk \co_{\mathbb{Q}(P_i)}^*=\left[\frac{n-1}{2}\right]$ for $i\gg 0$.  It follows from Theorem \ref{Zan} that there are $\gg N/\log N$ distinct number fields in the set $\{\mathbb{Q}(P_1),\ldots,\mathbb{Q}(P_N)\}$.  An easy calculation shows that $|d_{\mathbb{Q}(P_i)}|=O(i^{(n-1)t})$.  Combining the above statements then gives the corollary.
\end{proof}

Of particular interest is the case where $C$ is a hyperelliptic curve.

\begin{corollary}
\label{chyp}
Let $C$ be a nonsingular hyperelliptic curve over $\mathbb{Q}$ with a rational Weierstrass point.  Let $g$ denote the genus of $C$.  Let $m>1$ be an integer.  Then there exist $\gg X^{\frac{1}{2g+1}}/\log X$ imaginary quadratic number fields $k$ with
\begin{equation*}
\rk_m \Cl(k)\geq \rk_m \Jac(C)(\mathbb{Q})_{\rm{tors}},\quad |d_k|<X,
\end{equation*}
and $\gg X^{\frac{1}{2g+1}}/\log X$ real quadratic number fields $k$ with
\begin{equation*}
\rk_m \Cl(k)\geq \rk_m \Jac(C)(\mathbb{Q})_{\rm{tors}}-1,\quad d_k<X.
\end{equation*}
\end{corollary}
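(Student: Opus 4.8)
The plan is to derive this from Corollary~\ref{super} (the superelliptic case) specialized to $n=2$. The first step is to use the rational Weierstrass point to present $C$ in the form $y^2=f(x)$ with $f\in\mathbb{Q}[x]$ squarefree of \emph{odd} degree $2g+1$: the hyperelliptic map realizes $C$ as a double cover of $\mathbb{P}^1$ branched over the $2g+2$ images of the Weierstrass points, so a rational Weierstrass point yields a rational branch point, and moving that point to $\infty$ by the evident $\mathbb{Q}$-rational automorphism of $\mathbb{P}^1$ (then clearing denominators in $y$) produces an equation $y^2=f(x)$ in which $\infty$ is a branch point, forcing $\deg f=2g+1$. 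Since $\gcd(2g+1,2)=1$, this puts us squarely in the setting of Corollary~\ref{super} with $n=2$ and $\deg f=2g+1$.

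For the imaginary quadratic statement I would then simply quote Corollary~\ref{super}: its exponent $\tfrac{1}{(n-1)\deg f}$ becomes $\tfrac{1}{2g+1}$, and $\left[\tfrac{n-1}{2}\right]=0$, so it produces $\gg X^{1/(2g+1)}/\log X$ fields $k$ of degree $2$ with $|d_k|<X$ and $\rk_m\Cl(k)\geq\rk_m\Jac(C)(\mathbb{Q})_{\rm tors}$. These are imaginary quadratic because, in the proof of Corollary~\ref{super}, the leading coefficient is normalized to $-1$ — legitimate here since $2g+1$ is odd, so a sign-reversing rescaling is available — whence the specialized values of $f$ are negative for $i\gg0$ and each $\mathbb{Q}(P_i)$ has no real place (equivalently $\rk\co_{\mathbb{Q}(P_i)}^*=0$).

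For the real quadratic statement I would rerun the proof of Corollary~\ref{super} verbatim, except normalizing the leading coefficient of $f$ to $+1$ instead of $-1$ (again possible because $2g+1$ is odd); the total-ramification argument for the primes in $S_{\rm fin}$ is unaffected since it only uses that the leading coefficient is a $p$-adic unit. Now the specialized values of $f$ are positive for $i\gg0$, so $\mathbb{Q}(P_i)$ is real quadratic, $\rk\co_{\mathbb{Q}(P_i)}^*=1$ by Dirichlet, and the bound coming from Corollary~\ref{cormain} reads $\rk_m\Cl(\mathbb{Q}(P_i))\geq\rk_m\Jac(C)(\mathbb{Q})_{\rm tors}-1$; the counting is identical to that in Corollary~\ref{super} (Theorem~\ref{Zan} gives $\gg N/\log N$ distinct fields among $\mathbb{Q}(P_1),\dots,\mathbb{Q}(P_N)$ and $|d_{\mathbb{Q}(P_i)}|=O(i^{2g+1})$), yielding $\gg X^{1/(2g+1)}/\log X$ real quadratic fields with $d_k<X$. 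The only place needing genuine care is the normalization in the first step — confirming that a rational Weierstrass point really forces $\deg f$ odd and that this is compatible with the integrality and leading-coefficient normalizations internal to the proof of Corollary~\ref{super}; beyond that, the corollary carries no analytic or geometric content not already present in Corollaries~\ref{cormain} and~\ref{super}.
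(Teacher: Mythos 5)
Your proposal is correct and follows essentially the same route as the paper: reduce to Corollary~\ref{super} via the odd-degree model $y^2=f(x)$, $\deg f=2g+1$, afforded by the rational Weierstrass point. The only difference is cosmetic: for the real quadratic case you renormalize the leading coefficient to $+1$ and take $i\gg 0$, whereas the paper keeps $a_t=-1$ and uses points $P_i$ with $i<0$; these are equivalent under $x\mapsto -x$, and your observation that the total-ramification argument only needs the leading coefficient to be a $p$-adic unit is exactly the point that makes the variant work.
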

\begin{proof}
Since $C$ has a rational Weierstrass point, $C$ is birational to a plane curve defined by $y^2=f(x)$, for some $f(x)\in \mathbb{Q}[x]$ with $\deg f=2g+1$.  The statement on imaginary quadratic fields now follows from Corollary \ref{super}.  The statement for real quadratic fields follows similarly from the proof of Corollary \ref{super} using points $P_i$, $i<0$, with the extra $-1$ term coming from the rank-one unit group of a real quadratic field.
\end{proof}

As another variation on Corollary \ref{cormain}, we prove a theorem for curves $C$ using $\Jac(C)(\mathbb{Q})_{\rm{tors}}$ rather than $\Pic(C)_{\rm{tors}}$, and we give a simple explicit description of a viable set of places $S$ in \eqref{coreq}.  We let $\bar{C}$ be the curve $C$ base extended to $\Qbar$, and we naturally identify objects on $C$ (rational functions, divisors, etc.) with the corresponding objects on $\bar{C}$. 
\begin{corollary}
\label{thmain2}
Let $C$ be a nonsingular projective curve over $\mathbb{Q}$.  Let $\phi:C\to \mathbb{P}^1$ be a morphism with $\deg \phi = n>1$.  Let $m>1$ be a positive integer.  Let 
\begin{equation*}
d=\gcd\{[k:\mathbb{Q}]\mid \bar{C} \text{ has a $k$-rational degree one divisor}\}.
\end{equation*}
Let $m'=\frac{m}{(m,d)}$ with $m'>1$.  Let $T$ be the union of the set of primes dividing $m$, the set of primes of bad reduction of $\Jac(C)$, and the archimedean prime of $\mathbb{Q}$.  For $i\in \mathbb{Z}$, let $P_i\in \phi^{-1}(i)$.  Then for all but $O(\sqrt{N})$ values $i=1,\ldots, N$, we have $[\mathbb{Q}(P_i):\mathbb{Q}]=n$ and
\begin{equation*}
\rk_{m'} \Cl(\mathbb{Q}(P_i))\geq \rk_m \Jac(C)(\mathbb{Q})_{\rm{tors}}+\#T_{\rm{fin}}-\rk \co_{\mathbb{Q}(P_i),T}^*.
\end{equation*}
\end{corollary}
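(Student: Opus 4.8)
The plan is to derive this from Theorem~\ref{thmain}, applied with $l=1$, with $V=\phi^{-1}(\mathbb{A}^1)\subseteq C$ and the restricted map $\phi\colon V\to\mathbb{A}^1$ (still of degree $n$), with ${\bf I}$ the cofinite set of integers $i$ for which $P_i$ is a zero or pole of none of the functions produced below, with exponent $m'$ in place of $m$, and with $S=T$. Two things must be supplied: (i) rational functions $f_1,\dots,f_r\in\mathbb{Q}(C)$, where $r=\rk_m\Jac(C)(\mathbb{Q})_{\rm tors}$, satisfying condition~\eqref{cond1} for the exponent $m'$; and (ii) the verification that $S=T$ works in~\eqref{cond2} for these functions. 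Granting (i) and (ii), Theorem~\ref{thmain} (with $l=1$) gives at once that, for all but $O(\sqrt N)$ of $i=1,\dots,N$, one has $[\mathbb{Q}(P_i):\mathbb{Q}]=n$ and $\rk_{m'}\Cl(\mathbb{Q}(P_i))\ge r+\#T_{\rm fin}-\rk\co_{\mathbb{Q}(P_i),T}^* = \rk_m\Jac(C)(\mathbb{Q})_{\rm tors}+\#T_{\rm fin}-\rk\co_{\mathbb{Q}(P_i),T}^*$, which is the claim.

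For (i), fix $\tau_1,\dots,\tau_r\in\Jac(C)(\mathbb{Q})_{\rm tors}$ generating a subgroup $\cong(\mathbb{Z}/m\mathbb{Z})^r$. Identifying $\Jac(C)(\mathbb{Q})=\Pic^0(\bar C)^{G_\mathbb{Q}}$ and using the exact sequence $0\to\Pic(C)\to\Pic(\bar C)^{G_\mathbb{Q}}\to\mathrm{Br}(\mathbb{Q})$ coming from the Hochschild--Serre spectral sequence for $\mathbb{G}_m$ (together with Hilbert~90 and $\bar{\mathbb{Q}}[\bar C]^*=\bar{\mathbb{Q}}^*$, valid since $C$ is projective), let $\delta_j\in\mathrm{Br}(\mathbb{Q})$ be the obstruction to $\tau_j$ being the class of a $\mathbb{Q}$-rational divisor. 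Then $m\delta_j=0$, because $m\tau_j=0$. On the other hand, if $k$ is any number field over which $\bar C$ carries a degree-one $k$-rational divisor, then the index of $C_k$ is $1$, so the cokernel of $\Pic(C_k)\to\Pic(\bar C)^{G_k}$ vanishes (it is killed by that index); hence $\mathrm{res}_{k/\mathbb{Q}}\delta_j=0$, and applying corestriction gives $[k:\mathbb{Q}]\,\delta_j=0$. Since this holds for every such $k$, $\delta_j$ is killed by the gcd $d$ of all the $[k:\mathbb{Q}]$, so $(m,d)\,\delta_j=0$. Therefore $(m,d)\tau_j=[D_j]$ for some $\mathbb{Q}$-rational divisor $D_j$, which is of degree $0$ because $\Pic(C)\hookrightarrow\Pic(\bar C)$ respects degrees; moreover $m'D_j=m'(m,d)\tau_j=m\tau_j=0$ in $\Pic(\bar C)$, hence in $\Pic(C)$, so $m'D_j=(f_j)$ for some $f_j\in\mathbb{Q}(C)^*$. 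Finally, the classes $(m,d)\tau_1,\dots,(m,d)\tau_r$ generate a copy of $(\mathbb{Z}/m'\mathbb{Z})^r$ inside $\Pic^0(\bar C)$ and stay independent over $\mathbb{Z}/m'\mathbb{Z}$, so the Kummer-theory argument of Lemma~\ref{LKum}, applied verbatim with $\Pic^0(\bar C)$ and $m'$ in place of $\Pic(V)$ and $m$, yields $[\Qbar(C)(\sqrt[m']{f_1},\dots,\sqrt[m']{f_r}):\Qbar(C)]=m'^{r}$, which is~\eqref{cond1} for the exponent $m'$.

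For (ii), we may assume $r>0$, since otherwise there are no $f_j$ and, as $\rk\co_{\mathbb{Q}(P_i),T}^*\ge\rk\co_{\mathbb{Q}(P_i)}^*+\#T_{\rm fin}$, the asserted inequality is vacuous. Then~\eqref{cond2} with $S=T$ is exactly Lemma~\ref{fi} applied to each $D_j$ — whose class is torsion of order dividing $m'$ in $\Jac(C)(\mathbb{Q})$ — and $f_j$: it asserts that for a $\mathbb{Q}$-rational divisor $D$ with $[D]$ of order dividing $m'$ in $\Jac(C)(\mathbb{Q})$ and a rational function $f$ with $(f)=m'D$, the ideal $f(P)\,\co_{\mathbb{Q}(P),T}$ is an $m'$-th power of a fractional $\co_{\mathbb{Q}(P),T}$-ideal for every $P\in C(\Qbar)$ that is not a zero or pole of $f$. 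This is a sharpening of Weil's Decomposition Theorem (Lemma~\ref{fi2}) whose extra input is the theory of Néron models: at a prime $q\notin T_{\rm fin}$ the Néron model of $\Jac(C)$ is an abelian scheme, hence has trivial component group, and since also $q\nmid m'$, Raynaud's description of the Picard functor of a regular model of $C$ over $\mathbb{Z}_q$ forces $m'\mid\ord_{\mathfrak q}(f(P))$ for every place $\mathfrak q\mid q$ of $\mathbb{Q}(P)$.

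With (i) and (ii) in hand, Theorem~\ref{thmain} finishes the proof. The step I expect to be the main obstacle is (i): pinning down that the correct loss is $m\mapsto m'=m/(m,d)$, and in particular showing — through the Brauer-group obstruction and corestriction from the residue fields of the closed points of $C$ — that $(m,d)\tau_j$, rather than only some larger multiple of $\tau_j$, is already represented by a genuine $\mathbb{Q}$-rational divisor.
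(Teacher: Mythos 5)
Your step (i) is essentially correct and is a genuinely different route from the paper's: you produce $\mathbb{Q}$-rational divisors $D_j$ with $[D_j]=(m,d)\tau_j$ via the Hochschild--Serre obstruction in $\mathrm{Br}(\mathbb{Q})$, killed by $m$ (torsion) and by $d$ (restriction--corestriction through fields $k$ over which $\bar{C}$ has index one), whereas the paper's Lemma \ref{fi} gets its rational divisors and functions by embedding $\bar{C}$ into $J$ by a degree-one $k$-rational divisor, pulling back $\Theta-t_{c_j}^*\Theta$, and norming down from fields $k_l$ with $\sum e_l\equiv 1\pmod m$. Your version cleanly isolates why the loss is exactly $m\mapsto m'=m/(m,d)$, which the paper only treats in the case $(m,d)=1$ ``up to minor modifications,'' so this part is a genuine (and nice) variant.

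The gap is in (ii), which is where all the arithmetic content of the choice of $T$ lives. You assert that for \emph{any} $f\in\mathbb{Q}(C)^*$ with $(f)=m'D$ and $[D]$ torsion, and for \emph{every} $P$ avoiding the zeros and poles, one has $m'\mid\ord_{\mathfrak q}f(P)$ at all places $\mathfrak q$ above $q\notin T_{\rm fin}$, and you justify this in one sentence by invoking N\'eron models and Raynaud; you also misattribute this statement to Lemma \ref{fi}, which says nothing of the sort -- it only asserts the existence of \emph{particular} $D_j,f_j$ for which \eqref{power} holds for all but $O(\sqrt N)$ of the $P_i$, and its proof is precisely where the paper does the work: it constructs $f_j$ as norms of functions $g_j$ on $J$ satisfying $g_j(mx)=h_j(x)^m$ and uses that the division field $k(y)/k(x)$, $my=x$, is unramified outside $T$ (this is where good reduction of $\Jac(C)$ and $q\nmid m$ enter), the $O(\sqrt N)$ exceptions being needed to identify $f_j(P_i)$ with a norm of $g_j(P_i)$. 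Your stronger local claim is exactly the delicate point and is not ``forced'' by triviality of the component group: $T$ excludes only bad reduction of $\Jac(C)$, so at $q\notin T_{\rm fin}$ the curve $C$ itself may have bad reduction, the special fibre of a regular model may be reducible, the closure of $(f)$ then contains vertical components, and $P$ (an arbitrary algebraic point) may reduce to a singular point or a non-identity component; controlling $\ord_{\mathfrak q}f(P)$ modulo $m'$ in that situation requires a real argument (intersection theory with the vertical part, or transporting the problem to $J$ as the paper does), none of which is supplied. As written, the heart of the corollary is asserted rather than proved; if you supply a proof of your local divisibility statement (or fall back on the paper's construction of specific $f_j$ on the Jacobian), the rest of your outline does go through.
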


For simplicity, we prove Corollary \ref{thmain2} in the case $m=m'$, or equivalently, $(d,m)=1$.  The proof of the general case of the corollary requires only minor modifications.  Let $r=\rk_m \Jac(C)(\mathbb{Q})_{\rm{tors}}$.  Corollary \ref{thmain2} follows from Theorem \ref{thmain}, Lemma \ref{LKum}, and the following lemma.
\begin{lemma}
\label{fi}
There exist $\mathbb{Q}$-rational divisors $D_1,\ldots, D_r$ whose divisor classes generate a subgroup $(\mathbb{Z}/m\mathbb{Z})^r\subset\Jac(C)(\mathbb{Q})$ and rational functions $f_j\in \mathbb{Q}(C)$, $j=1,\ldots, r$, such that $(f_j)=mD_j$ and for all but $O(\sqrt{N})$ values $i=1,\ldots, N$,
\begin{equation}
\label{power}
f_j(P_i)\co_{\mathbb{Q}(P_i),T}=\mathfrak{a}_{i,j}^m
\end{equation}
for some fractional $\co_{\mathbb{Q}(P_i),T}$-ideal $\mathfrak{a}_{i,j}$.
\end{lemma}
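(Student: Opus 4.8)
The plan is to split the argument into (i) producing the divisors $D_j$ and the functions $f_j$, and (ii) verifying the $m$-th power property \eqref{power}; as the text does, I treat only the case $m'=m$, i.e. $(d,m)=1$. For (i): since $(\mathbb{Z}/m\mathbb{Z})^r\subset\Jac(C)(\mathbb{Q})_{\rm tors}$, fix elements $Q_1,\dots,Q_r\in\Jac(C)(\mathbb{Q})[m]=\Pic^0(\bar C)^{\Gal(\Qbar/\mathbb{Q})}[m]$ generating a copy of $(\mathbb{Z}/m\mathbb{Z})^r$. A Hilbert 90 argument gives an injection $\Pic(C)\hookrightarrow\Pic(\bar C)^{\Gal(\Qbar/\mathbb{Q})}$ whose cokernel embeds in $\mathrm{Br}(\mathbb{Q})$ and is annihilated by the index of $C$. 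One checks that this index equals $d$ (a $k$-rational degree one divisor on $\bar C$ amounts to a point of $C$ rational over $k$, so $d=\gcd\{[k:\mathbb{Q}]\mid C(k)\neq\emptyset\}$ is the index), so that $(d,m)=1$ forces each $Q_j$ to be the class of a $\mathbb{Q}$-rational divisor $D_j$ of degree $0$. Since $mQ_j=0$ and the displayed map is injective, $mD_j$ is principal over $\mathbb{Q}$: $mD_j=(f_j)$ for some $f_j\in\mathbb{Q}(C)^*$, and the $[D_j]$ generate $(\mathbb{Z}/m\mathbb{Z})^r$ by construction. (For general $m'$ one instead lifts suitable multiples of the $Q_j$ by $\mathbb{Q}$-rational divisors of degree divisible by $d$ and divides out, as the text indicates.)

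For (ii) I would fix a regular proper model $\mathcal{C}$ of $C$ over $\mathbb{Z}$, let $\overline{D_j}$ be the Zariski closure of $D_j$ in $\mathcal{C}$, and write $\operatorname{div}_{\mathcal{C}}(f_j)=m\overline{D_j}+V_j$, where $V_j$ is a divisor supported on finitely many closed fibers $\mathcal{C}_\ell$. For all but finitely many $i$ one has $P_i\notin\Supp(D_j)$ for every $j$; discard the rest. Given such an $i$, a prime $\ell\notin T$, and a place $\mathfrak{p}$ of $\mathbb{Q}(P_i)$ over $\ell$, properness lets $P_i$ extend to a section $\widetilde P_i$ of $\mathcal{C}$ over the ring of integers of $\mathbb{Q}(P_i)_{\mathfrak{p}}$, and compatibility of pullback with divisors gives $\ord_{\mathfrak{p}}(f_j(P_i))=m\,(\widetilde P_i\cdot\overline{D_j})_{\mathfrak{p}}+(\widetilde P_i\cdot V_j)_{\mathfrak{p}}$ with both terms integers. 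The first term is divisible by $m$, so \eqref{power} reduces to arranging that every coefficient of $V_j$ along a component of a fiber $\mathcal{C}_\ell$ with $\ell\notin T$ is divisible by $m$.

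This divisibility is the heart of the matter, and it is where good reduction of $\Jac(C)$ away from $T$ enters; one must be careful, since $C$ itself may have (necessarily compact-type, or worse on a non-semistable regular model) bad reduction at primes not in $T$. Writing $\mathcal{C}_\ell=\sum_k e_k\Gamma_k$ and $M=(\Gamma_k\cdot\Gamma_{k'})$, numerical triviality of principal divisors gives $M\underline n=-m\underline\delta$ for the coefficient vector $\underline n$ of $V_j$, where $\delta_k=(\overline{D_j}\cdot\Gamma_k)$ and $\sum_k e_k\delta_k=\deg D_j=0$. Since $\Jac(C)$ has good reduction at $\ell$, the Néron model of $\Jac(C)$ over $\mathbb{Z}_\ell$ is an abelian scheme and its component group is trivial, so by Raynaud's description of the component group via $M$ the image of $M$ is exactly the lattice $\{\underline x:\sum_k e_kx_k=0\}$; a short computation then yields $\ker(M\bmod m)=\langle\,\underline e\bmod m\,\rangle$ (here one also uses $\gcd_k e_k\mid d$ — restrict a $\mathbb{Q}$-rational degree-$d$ divisor to $\mathcal{C}_\ell$ — hence $\gcd_k e_k$ is prime to $m$). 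Thus $V_j$ coincides, modulo $m$ times vertical divisors over $\ell$, with a multiple $c_\ell\mathcal{C}_\ell$ of the fiber; as only finitely many $\ell$ occur, replacing $f_j$ by $\lambda_jf_j$ with $\lambda_j=\prod_\ell\ell^{-c_\ell}\in\mathbb{Q}^*$ — which leaves $(f_j)=mD_j$ unchanged — makes all these vertical coefficients divisible by $m$. Then $(\widetilde P_i\cdot V_j)_{\mathfrak{p}}\equiv 0\pmod m$ for every $\mathfrak{p}$ over a prime outside $T$, so $f_j(P_i)\co_{\mathbb{Q}(P_i),T}$ is an $m$-th power of a fractional ideal for all but the finitely many discarded $i$, which is a fortiori within the claimed bound $O(\sqrt N)$.

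I expect the vertical-coefficient step to be the main obstacle: the statement "$m$ divides the vertical coefficients of $\operatorname{div}_{\mathcal{C}}(f_j)$ outside $T$" is false for a general $D_j$ and genuinely uses torsion of $[D_j]$ together with good reduction of the Jacobian (triviality of the component group); one must also track ramification of $\mathfrak{p}/\ell$, non-reduced or non-split fiber components, and the multiplicities $e_k$, none of which affects divisibility by $m$ but all of which require care. As an alternative to the explicit model computation, one could instead invoke Weil's Decomposition Theorem (as in the proof of Lemma \ref{fi2}) and then argue that the finite set of bad primes it produces is contained in $T$; this comes down to the same reduction-theoretic input.
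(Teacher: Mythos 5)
Your argument is correct in outline but follows a genuinely different route from the paper, in both halves. For the rationality of the $D_j$ and $f_j$ you lift the Galois-invariant $m$-torsion classes through $\Pic(C)\hookrightarrow\Pic(\bar C)^{\Gal(\Qbar/\mathbb{Q})}$, using that the Brauer obstruction is killed both by $m$ and by the index, which equals $d$ (your parenthetical claim that a degree-one $k$-rational divisor ``amounts to'' a $k$-point is false in general, but harmless: pushing such a divisor forward to $C$ gives a $\mathbb{Q}$-rational divisor of degree $[k:\mathbb{Q}]$, which is all the corestriction argument needs). The paper never lifts classes: it works over auxiliary fields $k_l$ carrying degree-one rational divisors with $\sum_l e_l\equiv 1\pmod m$, embeds $\bar C$ in its Jacobian, produces $g_j$ with $(g_j)=mE_j$ from the theta divisor via the theorem of the square, and descends to $\mathbb{Q}$ by norms. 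For the $m$-th power condition \eqref{power}, the paper's key input is the identity $g_j(mx)=h_j(x)^m$ together with the classical fact (from weak Mordell--Weil) that $k(y)/k(x)$, $my=x$ on $J$, is unramified outside $T$; your substitute is intersection theory on a regular model plus a component-group computation showing that the vertical part of the divisor of $f_j$ is congruent to a multiple of the fiber modulo $m$ at primes outside $T$. Your route yields a stronger conclusion (only finitely many exceptional $i$, versus the paper's $O(\sqrt N)$, which arises from Hilbert irreducibility being needed for the norm compatibility $f_j(P_i)=\Nki\left(g_j(P_i)\right)$), at the price of heavier machinery; the one step you should not present as a quotable theorem is ``good reduction $\Rightarrow \mathrm{im}(M)=\{\underline{x}:\sum_k e_kx_k=0\}$''. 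Raynaud's identification of this quotient with the geometric component group holds as stated only under extra hypotheses (e.g.\ $\gcd_k e_k=1$, an \'etale quasi-section), and multiple fibers genuinely occur in the situation of the lemma, since $C$ may have no points over $\mathbb{Q}_\ell^{nr}$; in general the comparison map has kernel and cokernel killed by a power of $\gcd_k e_k$, which divides $d$ and is therefore prime to $m$, so the mod-$m$ statement you need does survive --- but only by the same $(g,m)=1$ observation you invoke for a different purpose, and this refinement (Raynaud, Bosch--Liu/Lorenzini) must be cited and tracked over the strict henselization, where the geometric components and their multiplicities are the relevant data.
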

\begin{proof}
Let $c_1,\ldots, c_r$ be generators for a subgroup $(\mathbb{Z}/m\mathbb{Z})^r\subset\Jac(C)(\mathbb{Q})$.  We first claim that it suffices to show that if $\bar{C}$ has a degree one $k$-rational divisor $D$ and $[k:\mathbb{Q}]=e$, then there exist $\mathbb{Q}$-rational divisors $D_1,\ldots, D_r$ and rational functions $f_j\in \mathbb{Q}(C)$, $j=1,\ldots, r$, such that $[D_j]=ec_j$, $(f_j)=mD_j$, and \eqref{power} holds.  This can be proven as follows.  Since $(m,d)=1$, there exist number fields $k_l$, $[k_l:\mathbb{Q}]=e_l$, $l=1,\ldots, L$, such that $\sum_{l=1}^L e_l\equiv 1 \mod m$ and for each $l$ there exists a degree one $k_l$-rational divisor on $\bar{C}$.  Let $D_{j,l}$ and $f_{j,l}$ be divisors and rational functions, as in our claim above, for $k_l$.  Let $D_j=\sum_{l=1}^LD_{j,l}$ and $f_j=\prod_{l=1}^L f_{j,l}$.  Then using that $\sum_{l=1}^L e_l\equiv 1 \mod m$, we get that $[D_j]=c_j$ and it is easily seen that $f_j$ and $D_j$ satisfy the conditions of the theorem.

Let $D$ be a degree one $k$-rational divisor on $\bar{C}$.  Let $[k:\mathbb{Q}]=e$.  Let $\psi:\bar{C}\hookrightarrow J=\Jac(\bar{C})$ be the $k$-rational embedding given by $P\mapsto [P-D]$.  Let $\Theta=\psi(\bar{C})+\ldots+\psi(\bar{C})$ be the theta divisor on $J$.  Let $E_j=\Theta-t_{c_j}^*\Theta$, where $t_{c_j}$ denotes the translation-by-$c_j$ map on $J$.  By the theorem of the square, $mE_j$ is a principal divisor.  Let $g_j \in k(J)$ be such that $(g_j)=mE_j$.  Since $[m]^*E_j\sim mE_j$ is principal, where $[m]$ is multiplication by $m$ on $J$, let $h_j\in k(J)$ be such that $(h_j)=[m]^*E_j$.  It follows immediately that $g_j(mx)=\alpha_jh_j(x)^m$ for some constant $\alpha_j\in k^*$.  Replacing $g_j$ by $g_j/\alpha_j$, we can assume that $g_j(mx)=h_j(x)^m$.  Let $x,y\in J(\Qbar)$ with $my=x$.  It is a standard fact that the extension $k(y)/k(x)$ is unramified outside of (places lying above) $T$.  Since $g_j(x)=g_j(my)=h_j(y)^m$ and $k(y)/k(x)$ is unramified outside of $T$, it follows that  $g_j(x)\co_{k(x),T}=\mathfrak{a}_j^m$ for some fractional $\co_{k(x),T}$-ideal $\mathfrak{a}_j$.  Let $\Nk$ denote any map naturally induced by the norm map from $k$ to $\mathbb{Q}$.  Let $f_j=\Nk g_j|_{\bar{C}}$, via the embedding $\psi:\bar{C}\hookrightarrow J$.  Let $D_j=\Nk \psi^*(\Theta-t_{c_j}^*\Theta)$.  Then $f_j$ and $D_j$ are both defined over $\mathbb{Q}$.  Furthermore, $(f_j)=\Nk \psi^*(mE_j)=m\Nk \psi^*(\Theta-t_{c_j}^*\Theta)=mD_j$.  Since $[\psi^*(\Theta-t_{c_j}^*\Theta)]=c_j$ \cite[Th.\ A.8.2.1]{SH}, $[D_j]=\Nk c_j=ec_j$.  By Hilbert's Irreducibility Theorem, for all but $O(\sqrt{N})$ values $i=1,\ldots, N$, $[k(P_i):k]=\deg \phi=n$.  For such values of $i$, $f_j(P_i)=\left(\Nk g_j|_{\bar{C}}\right)(P_i)=\Nki \left(g_j|_{\bar{C}}(P_i)\right)$.  Thus, from the corresponding property for $g_j$, we obtain that \eqref{power} holds for $f_j$ and all but $O(\sqrt{N})$ values $i=1,\ldots, N$.
\end{proof}

\section{Applications and Examples}
\label{apps}
Using the theorems of the last section, we can rederive, in a simple manner, many of the results in Table \ref{table1}.  Additionally, in some cases we are able to obtain new quantitative results.
\subsection{Number fields $k$ of degree $n$ with $\rk_n \Cl(k)$ arbitrarily large}
\label{Rosen}
We give a simple proof of the fact that there exist number fields $k$ of degree $n$ with $\rk_n \Cl(k)$ arbitrarily large.  For $n=2$, this follows, as mentioned earlier, from a much more precise result of Gauss.  For arbitrary $n$, this is implicit in \cite{Bru} (or possibly \cite{Ros}).  The following theorem and proof are essentially taken from \cite{Lev2}.
\begin{theorem}
\label{rn}
Let $n>1$ be a positive integer.  Let $f(x)=\prod_{i=1}^r(a_ix-b_i)$, $a_i, b_i\in \mathbb{Z}$ for all $i$, be a polynomial with $r$ distinct roots.  Let 
\begin{equation*}
T=\{x\in \mathbb{Z}\mid (a_ix-b_i,a_jx-b_j)=1, \forall i,j, i\neq j\}.
\end{equation*}
Then for all but $O(\sqrt{N})$ values $x\in T\cap [-N,\ldots,N]$,
\begin{equation*}
\rk_n \Cl\left(\mathbb{Q}\left(\sqrt[n]{f(x)}\right)\right)\geq r-\rk \co_{\mathbb{Q}\left(\sqrt[n]{f(x)}\right)}^*-1.
\end{equation*}
\end{theorem}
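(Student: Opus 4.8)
The plan is to apply Theorem~\ref{thmain} directly. Take for $V$ the normalization of the plane curve $y^{n}=f(x)$ over $\mathbb{Q}$, and let $\phi\colon V\to\mathbb{A}^{1}$ be the $x$-coordinate map, so that $\deg\phi=n$ and, for $t\in\mathbb{Z}$ with $P_{t}\in\phi^{-1}(t)$, one has $\mathbb{Q}(P_{t})\cong\mathbb{Q}(\sqrt[n]{f(t)})$. Since $f$ has only simple roots, $f(x)$ is not a $p$-th power in $\Qbar(x)$ for any prime $p\mid n$ and does not lie in $-4\,\Qbar(x)^{4}$, so $y^{n}-f(x)$ is irreducible over $\Qbar(x)$ and $V$ is a variety over $\mathbb{Q}$ in the sense of the paper. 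I would apply Theorem~\ref{thmain} with $m=n$, with $\mathbf{I}=T\setminus f^{-1}(0)$ (a subset of $\mathbb{A}^{1}(\mathbb{Z})$ contained in $\phi(V)$ since $\phi$ is surjective; discarding the finitely many $t\in T$ with $f(t)=0$, where $\mathbb{Q}(P_{t})=\mathbb{Q}$, changes the count only by $O(1)$), and with the $r-1$ functions described next.

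For the functions, let $f_{j}\in\mathbb{Q}(V)$ be the pullback of the linear polynomial $a_{j}x-b_{j}$, so that $f=f_{1}\cdots f_{r}$ and $y^{n}=f_{1}\cdots f_{r}$ on $V$; feed Theorem~\ref{thmain} only the $r-1$ functions $f_{1},\dots,f_{r-1}$ (any $r-1$ of the factors). In $\Qbar(V)^{*}/\Qbar(V)^{*n}$ the full set $f_{1},\dots,f_{r}$ satisfies the single relation $f_{1}\cdots f_{r}\equiv 1$ coming from $y^{n}=f_{1}\cdots f_{r}$, and this relation is the source of the ``$-1$'' in the statement. To verify \eqref{cond1} for $f_{1},\dots,f_{r-1}$, I would note that $\Qbar(V)(\sqrt[n]{f_{1}},\dots,\sqrt[n]{f_{r-1}})=\Qbar(x)(\sqrt[n]{f_{1}},\dots,\sqrt[n]{f_{r}})$, since $y$ is an $n$-th root of unity (lying in $\Qbar$) times $\sqrt[n]{f_{1}}\cdots\sqrt[n]{f_{r}}$. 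The $r$ linear polynomials $a_{j}x-b_{j}$ are pairwise non-associate (as $f$ has distinct roots) and hence multiplicatively independent modulo $n$-th powers in $\Qbar(x)^{*}$, because $\Qbar[x]$ is a unique factorization domain and each of them has a simple zero; so by Kummer theory $[\Qbar(x)(\sqrt[n]{f_{1}},\dots,\sqrt[n]{f_{r}}):\Qbar(x)]=n^{r}$, and dividing by $[\Qbar(V):\Qbar(x)]=n$ gives degree $n^{r-1}$ over $\Qbar(V)$, which is \eqref{cond1} with the parameter ``$r$'' of Theorem~\ref{thmain} equal to $r-1$.

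For \eqref{cond2} I would take $S=\{\infty\}$, so that $S_{\rm{fin}}=\emptyset$ and $\co_{\mathbb{Q}(P_{t}),S}=\co_{\mathbb{Q}(P_{t})}$. Fix $t\in\mathbf{I}$, put $k=\mathbb{Q}(P_{t})=\mathbb{Q}(\theta)$ with $\theta^{n}=f(t)=\prod_{l=1}^{r}(a_{l}t-b_{l})$, and let $\mathfrak{p}$ be any prime of $\co_{k}$ lying over a rational prime $p$. Since $\theta\in\co_{k}$, the integer $\ord_{\mathfrak{p}}(f(t))=n\,\ord_{\mathfrak{p}}(\theta)$ is divisible by $n$; and because $t\in T$ the integers $a_{l}t-b_{l}$ are pairwise coprime, so $p$ divides at most one of them, whence $\ord_{\mathfrak{p}}(f(t))=\ord_{\mathfrak{p}}(a_{j}t-b_{j})$ for whichever index $j$ that is, all other $\ord_{\mathfrak{p}}(a_{l}t-b_{l})$ vanishing. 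It follows that $n\mid\ord_{\mathfrak{p}}(a_{j}t-b_{j})$ for every $j$ and every $\mathfrak{p}$, so $(a_{j}t-b_{j})\co_{k}$ is the $n$-th power of a fractional ideal, which is \eqref{cond2}.

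With \eqref{cond1} and \eqref{cond2} established, Theorem~\ref{thmain} (with $l=1$, so the error term is $O(\sqrt{N})$) gives that for all but $O(\sqrt{N})$ values $t\in T\cap[-N,N]$ one has $[\mathbb{Q}(\sqrt[n]{f(t)}):\mathbb{Q}]=n$ and $\rk_{n}\Cl(\mathbb{Q}(\sqrt[n]{f(t)}))\ge(r-1)+\#S_{\rm{fin}}-\rk\co_{\mathbb{Q}(\sqrt[n]{f(t)})}^{*}=r-\rk\co_{\mathbb{Q}(\sqrt[n]{f(t)})}^{*}-1$, which is the assertion. I do not expect a genuine obstacle here: the theorem is designed as a direct illustration of the machinery, and unlike Corollary~\ref{super} it needs no coprimality between $\deg f$ and $n$ precisely because one evaluates at $x=t$ on the set $T$ rather than shifting $x$ to force ramification. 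The one step requiring care is the verification of \eqref{cond2}: it is the defining coprimality condition of $T$ that makes each rational prime meet only one of the $a_{l}t-b_{l}$, and hence makes each $(a_{j}t-b_{j})$ an $n$-th power of an ideal in $\mathbb{Q}(\sqrt[n]{f(t)})$.
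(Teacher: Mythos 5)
Your proposal is correct and follows essentially the same route as the paper: apply Theorem~\ref{thmain} to the curve $y^{n}=f(x)$ with $\phi$ the $x$-coordinate map, ${\bf I}=T$, $S=\{\infty\}$, and the $r-1$ functions $f_{j}=a_{j}x-b_{j}$, with \eqref{cond2} coming from the pairwise coprimality defining $T$ and \eqref{cond1} from the multiplicative independence of the linear factors modulo $n$-th powers (your Kummer-theory computation over $\Qbar(x)$ is just a slightly more explicit version of the paper's equivalence). No gaps; the extra care about $f(t)=0$ and the irreducibility of $y^{n}-f(x)$ are harmless refinements the paper leaves implicit.
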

\begin{proof}
Let $C$ be the curve defined by $y^n=f(x)$.  Let $\phi:C\to \mathbb{A}^1$ be given by $(x,y)\mapsto x$.  Let $x_0\in T$ and let $y_0=\sqrt[n]{f(x_0)}$.  Let $k=\mathbb{Q}(y_0)$.  From the definition of $T$ and the fact that $y_0^n=f(x_0)$, it follows that for every $j$, $(a_jx_0-b_j)=\mathfrak{a}_j^n$ for some ideal $\mathfrak{a}_j$ of $\co_k$.  Furthermore, 
\begin{equation*}
\left[\Qbar(C)\left(\sqrt[n]{a_1x-b_1},\ldots,\sqrt[n]{a_{r-1}x-b_{r-1}}\right):\Qbar(C)\right]=n^{r-1}
\end{equation*}
is equivalent to the assertion that
\begin{equation*}
(a_1x-b_1)^{i_1}\cdots(a_{r-1}x-b_{r-1})^{i_{r-1}}=f(x)^{i_r}g(x)^n, \quad g\in \Qbar(x), 0\leq i_1,\ldots, i_{r}<n
\end{equation*}
has only the trivial solution with $i_1=\cdots=i_{r}=0$. This is obviously true under our assumptions.
We now apply Theorem \ref{thmain} (with $V=C$, $\phi$, ${\bf I}=T$, $S=\{\infty\}$, and $f_j=a_jx-b_j$, $j=1,\ldots, r-1$).  When $(n,r)=1$, we note that if $\tilde{C}$ is a nonsingular projective model of $C$, then the divisors $D_1,\ldots, D_r$ on $\tilde{C}$ with $(f_j)=nD_j$ give rise to a subgroup $(\mathbb{Z}/n\mathbb{Z})^{r-1}$ in $\Jac(\tilde{C})(\mathbb{Q})$ (see Corollary~\ref{super}).
\end{proof}
The set $T$ in Theorem \ref{rn} may be infinite or empty depending upon $f$.  Since $r$ can be made arbitrary large with respect to $n$, by choosing appropriate polynomials $f$ we obtain the following corollary.
\begin{corollary}[Brumer, Rosen \cite{Bru,Ros}]
Let $n>1$ be a positive integer.  Then for any integer $r_0$ there exist infinitely many number fields $k$ of degree $n$ with $\rk_n \Cl(k)>r_0$.
\end{corollary}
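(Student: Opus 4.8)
The plan is to deduce the corollary from Theorem~\ref{rn} by choosing the polynomial $f$ so that $T$ is infinite while the number $r$ of linear factors is large compared to $n$. Fix the target $r_0$ and set $r=r_0+n+1$. The naive choice $f(x)=\prod_{i=1}^r(x-i)$ is useless here: modulo a small prime $p$ the roots $1,\dots,r$ repeat in every residue class, so $T$ misses every class mod $p$ (already mod $2$ once $r\geq 4$), giving $T=\emptyset$. The fix is to cluster all roots into a single residue class modulo each small prime. Put $M=\prod_{p\leq r}p$ and take
\begin{equation*}
f(x)=\prod_{i=1}^{r}\bigl(x-(1+iM)\bigr),
\end{equation*}
so in the notation of Theorem~\ref{rn} we have $a_i=1$, $b_i=1+iM$, which are $r$ distinct integers, and $f$ has $r$ distinct roots.

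First I would check that $M\mathbb{Z}\subseteq T$. For $x=Mk$ we have $x-b_i=M(k-i)-1\equiv -1\pmod M$, so $\gcd(x-b_i,M)=1$; on the other hand $(x-b_i,x-b_j)$ divides $b_j-b_i=(j-i)M$, and every prime factor of $j-i$ is at most $r-1$, hence divides $M$, hence is coprime to $x-b_i$. It follows that $(x-b_i,x-b_j)=1$ for all $i\neq j$, i.e.\ $Mk\in T$. In particular $T$ contains the infinite progression $M\mathbb{Z}$, so $\#(T\cap[-N,N])\gg N$. Applying Theorem~\ref{rn} (and noting that its proof, via Theorem~\ref{thmain}, also gives $[\mathbb{Q}(\sqrt[n]{f(x)}):\mathbb{Q}]=n$ for all but $O(\sqrt N)$ of the relevant $x$), we get that for all but $O(\sqrt N)$ integers $i$ with $1\leq i\leq N$ the field $k_i:=\mathbb{Q}(\sqrt[n]{f(Mi)})$ has degree $n$ over $\mathbb{Q}$ and
\begin{equation*}
\rk_n\Cl(k_i)\geq r-\rk\co_{k_i}^{*}-1\geq r-(n-1)-1=r_0+1>r_0,
\end{equation*}
where the middle inequality is Dirichlet's unit theorem, since $[k_i:\mathbb{Q}]=n$ forces $\rk\co_{k_i}^{*}\leq n-1$.

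It remains to produce infinitely many \emph{distinct} such fields, and here I would invoke Theorem~\ref{Zan} exactly as in the proof of Corollary~\ref{super}: applied to the curve $C\colon y^n=f(x)$ with the degree-$n$ morphism $(x,y)\mapsto x/M$, whose fibre over $i\in\mathbb{Z}$ is defined over $k_i$, it yields $\gg N/\log N$ isomorphism classes among $k_1,\dots,k_N$; since the $O(\sqrt N)$ exceptional indices carry at most $O(\sqrt N)$ of these classes, at least $\gg N/\log N$ of the $k_i$ are pairwise non-isomorphic degree-$n$ fields with $\rk_n\Cl(k_i)>r_0$. Letting $N\to\infty$ gives infinitely many (in fact a quantitative count, though that is not needed). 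I expect the only non-routine ingredient to be the construction of $f$ in the first paragraph; once one sees that the roots must be made congruent modulo every small prime, the verification that $M\mathbb{Z}\subseteq T$ is the crux, and everything after it is a direct combination of Theorem~\ref{rn}, Dirichlet's unit theorem, and Theorem~\ref{Zan}.
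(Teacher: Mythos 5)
Your proposal is correct and follows essentially the same route as the paper: the paper derives the corollary from Theorem~\ref{rn} by remarking that one can choose $f$ with $r$ large and $T$ infinite, leaving the choice implicit, and your polynomial $f(x)=\prod_{i=1}^{r}\bigl(x-(1+iM)\bigr)$ with $M=\prod_{p\leq r}p$ is a valid explicit instantiation (the verification that $M\mathbb{Z}\subseteq T$ is right). The subsequent combination of Theorem~\ref{rn}, Dirichlet's unit theorem, and Theorem~\ref{Zan} to extract infinitely many distinct degree-$n$ fields matches the paper's framework (as in Corollary~\ref{super}) and is sound.
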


Theorem \ref{rn} fits into a series of general results \cite{Bru,Ros,Sus,Zas,Sch} giving information on $\Cl(k)$ in terms of the ramification behavior of rational primes in $\co_k$.  Indeed, Theorem \ref{rn} is easily derived, at least when $n$ is prime, from the following general result of Roquette and Zassenhaus (see \cite{Sus} and \cite{Sch} for generalizations).
\begin{theorem}[Roquette, Zassenhaus \cite{Zas}]
Let $k$ be a number field of degree $n$.  Let $p$ be a prime dividing $n$.  Let $L=\mathbb{Q}(\{a\in k\mid a^p\in \mathbb{Q}\})$.  Let $r$ be the number of prime ideals of $\mathbb{Q}$ that are $p$-th powers of ideals of $k$.  Then
\begin{equation*}
\rk_p \Cl(k)\geq r-\rk \co_k^*-\ord_p [L:\mathbb{Q}],
\end{equation*}
where $\ord_p m$ denotes the largest power of $p$ dividing $m$.
\end{theorem}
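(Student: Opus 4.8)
The plan is to produce the elements of $\Cl(k)[p]$ directly from the $p$-th power primes and then bound the number of linear relations among them. Write $q_1,\dots,q_r$ for the rational primes with $q_s\co_k=\mathfrak{a}_s^p$ for some ideal $\mathfrak{a}_s$ of $\co_k$. Since $q_s\co_k$ is principal, $[\mathfrak{a}_s]^p=1$, so the classes $[\mathfrak{a}_1],\dots,[\mathfrak{a}_r]$ lie in the $\mathbb{F}_p$-vector space $\Cl(k)[p]$, whose dimension is $\rk_p\Cl(k)$. It therefore suffices to bound the dimension of the relation space
\[
\mathcal{R}=\{(c_1,\dots,c_r)\in\mathbb{F}_p^r: \textstyle\sum_s c_s[\mathfrak{a}_s]=0 \text{ in }\Cl(k)\}
\]
by $\rk\co_k^*+\ord_p[L:\mathbb{Q}]$, for then $\dim_{\mathbb{F}_p}\langle[\mathfrak{a}_1],\dots,[\mathfrak{a}_r]\rangle=r-\dim\mathcal{R}\geq r-\rk\co_k^*-\ord_p[L:\mathbb{Q}]$.

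The key construction is a homomorphism $\Psi\colon\mathcal{R}\to\co_k^*/(\co_k^*)^p$. If $(c_s)\in\mathcal{R}$, then $\prod_s\mathfrak{a}_s^{c_s}=(\gamma)$ is principal; raising to the $p$-th power gives $\prod_s q_s^{c_s}\co_k=(\gamma^p)$, so $\prod_s q_s^{c_s}=u\gamma^p$ for some $u\in\co_k^*$. Because a unit whose $p$-th root generates the trivial ideal is itself a $p$-th power of a unit, $\co_k^*\cap(k^*)^p=(\co_k^*)^p$, so the class of $u$ modulo $(\co_k^*)^p$ is well defined and $(c_s)\mapsto[u]$ is a homomorphism; then $\dim\mathcal{R}\leq\dim\ker\Psi+\dim\operatorname{Im}\Psi$. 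First I would bound $\ker\Psi$: if $\Psi((c_s))=0$ then $\prod_s q_s^{c_s}=\gamma^p$ with $\gamma\in k$, and since $\gamma^p=\prod_s q_s^{c_s}\in\mathbb{Q}$, the very definition of $L$ forces $\gamma\in L$, so each rational class represented by $\ker\Psi$ becomes a $p$-th power in $L$. As $q_1,\dots,q_r$ are distinct primes, $(c_s)\mapsto[\prod_s q_s^{c_s}]$ is injective on $\mathbb{F}_p^r$, so $\ker\Psi$ embeds into $\ker(\mathbb{Q}^*/(\mathbb{Q}^*)^p\to L^*/(L^*)^p)$. Adjoining the $p$-th roots of a set of representatives of this last group to $\mathbb{Q}(\zeta_p)$ gives, by Kummer theory, an extension of degree $p^{\dim\ker\Psi}$ over $\mathbb{Q}(\zeta_p)$ contained in $L(\zeta_p)$; hence $p^{\dim\ker\Psi}\mid[L(\zeta_p):\mathbb{Q}(\zeta_p)]$, and since $[L(\zeta_p):L]$ divides $p-1$ one gets $\ord_p[L(\zeta_p):\mathbb{Q}(\zeta_p)]=\ord_p[L:\mathbb{Q}]$, so $\dim\ker\Psi\leq\ord_p[L:\mathbb{Q}]$.

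For the image, the torsion-free part of $\co_k^*/(\co_k^*)^p$ has dimension $\rk\co_k^*$ by Dirichlet's unit theorem, so the only possible excess is one dimension coming from the class of a $p$-power root of unity, and this can occur only when $\zeta_p\in k$ but $\zeta_{p^2}\notin k$. I expect this root-of-unity bookkeeping to be the main obstacle: it is precisely the reason the bound is phrased through $[L:\mathbb{Q}]$, and it is the delicate point handled in the proof of Lemma \ref{ldeg}, where one enlarges the relevant field by adjoining $\sqrt[p]{\zeta}$. Concretely, one must either absorb $\sqrt[p]{\zeta}$ into the Kummer extension used for the kernel bound (tracking $[L(\sqrt[p]{\zeta}):L]$, which again contributes nothing $p$-adically once $\zeta_p\in k$ is taken into account), or argue that if $\zeta_p\equiv\prod_s q_s^{c_s}\pmod{(k^*)^p}$ nontrivially, then $k(\zeta_{p^2})=k(\sqrt[p]{\prod_s q_s^{c_s}})$ is a degree-$p$ extension unramified at every finite prime, hence corresponds via class field theory to a class of order $p$ in $\Cl(k)$ not already counted by the $[\mathfrak{a}_s]$ — so that the apparent loss of one dimension is recovered. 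The remaining ingredients — that the $[\mathfrak{a}_s]$ land in $\Cl(k)[p]$, the unit-versus-$p$-th-power dichotomy, and the Kummer degree count over $\mathbb{Q}(\zeta_p)$ — are routine.
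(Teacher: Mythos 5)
Most of your reduction is sound: the classes $[\mathfrak{a}_s]$ do lie in $\Cl(k)[p]$, the map $\Psi$ is well defined, and your Kummer-theoretic bound $\dim\ker\Psi\leq\ord_p[L:\mathbb{Q}]$ (via $\mathbb{Q}(\zeta_p)(\sqrt[p]{a})\subseteq L(\zeta_p)$ and $\ord_p[L(\zeta_p):\mathbb{Q}(\zeta_p)]=\ord_p[L:\mathbb{Q}]$) is correct. Note that the paper itself does not prove this theorem --- it cites Roquette--Zassenhaus \cite{Zas}; the nearest internal analogue is Lemma \ref{ldeg}, which deliberately carries the root-of-unity contribution as an explicit factor $[L(\sqrt[p]{\zeta}):L]$ rather than removing it. That factor is exactly what your argument still owes, and its removal is the substance of the theorem, not routine bookkeeping.

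Concretely: $\dim_{\mathbb{F}_p}\co_k^*/(\co_k^*)^p=\rk\co_k^*+\epsilon$, with $\epsilon=1$ whenever $\zeta_p\in k$ --- in particular \emph{always} when $p=2$ --- so what you have actually proved is $\rk_p\Cl(k)\geq r-\rk\co_k^*-1-\ord_p[L:\mathbb{Q}]$ in those cases. The stated bound is strictly stronger and is sharp: for $k$ imaginary quadratic and $p=2$ one has $L=k$, $\rk\co_k^*=0$, and the theorem gives $\rk_2\Cl(k)\geq r-1$, which is genus theory, whereas your unrepaired bound gives only $r-2$. So the lost dimension must genuinely be recovered, and neither of your two sketched repairs does so. The ``absorb $\sqrt[p]{\zeta}$'' route breaks for odd $p$: once $\prod_s q_s^{c_s}=\omega\gamma^p$ with $\omega$ a nontrivial $p$-power root of unity, $\gamma^p$ is no longer rational, so the defining property of $L$ no longer forces $\gamma\in L$, and the enlarged kernel is no longer controlled by $\ker(\mathbb{Q}^*/(\mathbb{Q}^*)^p\to L^*/(L^*)^p)$; the Kummer count then bounds it only inside a field like $k(\zeta_{p^2},\zeta_p)$, whose degree has $p$-part governed by $n$, not by $[L:\mathbb{Q}]$. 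The class-field-theory route asserts that $k\bigl(\sqrt[p]{\prod_s q_s^{c_s}}\bigr)/k$ is unramified at every finite prime: since $\bigl(\prod_s q_s^{c_s}\bigr)$ is an ideal $p$-th power this is true away from $p$, but unramifiedness at primes above $p$ requires a local congruence condition (Hecke's criterion) and fails in general (e.g.\ $k=\mathbb{Q}(i)$, $2=-i(1+i)^2$, where $k(\sqrt{2})/k$ is ramified at $1+i$); moreover the claim that the resulting ideal class is ``not already counted'' by the $[\mathfrak{a}_s]$ is not argued at all. Until the interaction between the torsion unit and the kernel bound is handled (the two losses cannot be estimated independently, as the quadratic example shows), the proof establishes only the weaker inequality.
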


\subsection{Imaginary quadratic fields with an element of order $m$ in the class group}

Let $k$ be an imaginary quadratic field with an integral ideal $\mathfrak{a}$ of order $m$ in $\Cl(k)$.  Then $\mathfrak{a}^m=(\alpha)$ for some $\alpha\in \co_k$.  Taking norms, we obtain an equation 
\begin{equation*}
4x^m=y^2+t^2d
\end{equation*}
in integers $x$, $y$, $t$, and $d$, where $d$ is squarefree, $k=\mathbb{Q}\left(\sqrt{-d}\right)$, $\alpha=\frac{y+t\sqrt{-d}}{2}$, and $\NN(\mathfrak{a})=x$.  The next theorem gives a sort of converse to this fact.
\begin{theorem}
\label{quadcl}
Let $T(N)=\{(x,y)\in \mathbb{Z}^2\mid |x|,|y|<N, y^2-4x^m<0, (x,y)=1\}$.  For all but $O(N^{\frac{3}{2}}\log N)$ elements $(x,y)\in T(N)$, the field $\mathbb{Q}\left(\sqrt{y^2-4x^m}\right)$ has an element of order $m$ in its class group.
\end{theorem}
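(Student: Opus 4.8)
The plan is to obtain Theorem~\ref{quadcl} as a direct application of Theorem~\ref{thmain} to the surface attached to the equation $z^2=y^2-4x^m$. Let $V\subset\mathbb{A}^3$ be the affine surface over $\mathbb{Q}$ defined by $z^2=y^2-4x^m$; it is geometrically integral because $y^2-4x^m$ is not a square in $\Qbar[x,y]$. Let $\phi:V\to\mathbb{A}^2$ be the projection $(x,y,z)\mapsto(x,y)$, a morphism which is generically finite of degree $n=2$, and set
\[
{\bf I}=\{(x,y)\in\mathbb{A}^2(\mathbb{Z})\mid y^2-4x^m<0,\ (x,y)=1\}.
\]
Every $(x,y)\in{\bf I}$ lies in $\phi(V)$, and choosing $P_{(x,y)}=(x,y,\sqrt{y^2-4x^m})\in\phi^{-1}(x,y)$ gives $\mathbb{Q}(P_{(x,y)})=\mathbb{Q}(\sqrt{y^2-4x^m})$, an imaginary quadratic field; moreover ${\bf I}(N)$ is exactly the set $T(N)$ in the statement. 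I would feed Theorem~\ref{thmain} the data $r=1$, $f_1=\tfrac12(y+z)\in\mathbb{Q}(V)$, and $S=\{\infty\}$.

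The first point to verify is the Kummer condition \eqref{cond1}, that $[\Qbar(V)(\sqrt[m]{f_1}):\Qbar(V)]=m$. Writing $\alpha=\tfrac12(y+z)$ and $\bar\alpha=\tfrac12(y-z)$ one has $\alpha+\bar\alpha=y$, $\alpha-\bar\alpha=z$, and $\alpha\bar\alpha=x^m$; thus $\Qbar(V)=\Qbar(\alpha,\bar\alpha)(x)$ with $x^m=\alpha\bar\alpha$, and since $\alpha,\bar\alpha$ are algebraically independent and $\alpha\bar\alpha$ is a product of two distinct primes of the UFD $\Qbar[\alpha,\bar\alpha]=\Qbar[y,z]$, it is not a $p$-th power for any prime $p\mid m$, so $[\Qbar(V):\Qbar(\alpha,\bar\alpha)]=m$. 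The same independence shows $\alpha$ and $\bar\alpha$ generate $(\mathbb{Z}/m\mathbb{Z})^2$ in $\Qbar(\alpha,\bar\alpha)^*/(\Qbar(\alpha,\bar\alpha)^*)^m$, so by Kummer theory $\Qbar(V)(\sqrt[m]{\alpha})=\Qbar(\alpha,\bar\alpha)(\sqrt[m]{\alpha},\sqrt[m]{\bar\alpha})$ (note $x$ equals $\sqrt[m]{\alpha}\cdot\sqrt[m]{\bar\alpha}$ up to a root of unity) has degree $m^2$ over $\Qbar(\alpha,\bar\alpha)$. Dividing gives $[\Qbar(V)(\sqrt[m]{f_1}):\Qbar(V)]=m^2/m=m$.

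The key step is the verification of \eqref{cond2} with $S=\{\infty\}$: that $f_1(P_{(x,y)})\co_{\mathbb{Q}(P_{(x,y)})}$ is an $m$-th power ideal for every $(x,y)\in{\bf I}$. Fix such a point, put $k=\mathbb{Q}(P_{(x,y)})$, $z=\sqrt{y^2-4x^m}$, $\alpha=\tfrac12(y+z)$, and let $\bar\alpha=\tfrac12(y-z)$ be its $\mathbb{Q}$-conjugate. Then $\alpha$ is a root of the monic integral polynomial $w^2-yw+x^m$, so $\alpha\in\co_k$, and $(\alpha)(\bar\alpha)=(\alpha\bar\alpha)=(x)^m$. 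The ideals $(\alpha)$ and $(\bar\alpha)$ are coprime: a common prime divisor $\mathfrak q$ would divide both $(\alpha+\bar\alpha)=(y)$ and $(\alpha\bar\alpha)=(x)^m$, forcing the rational prime under $\mathfrak q$ to divide $\gcd(x,y)=1$. Hence at every prime $\mathfrak q$ of $\co_k$ the valuation of $\alpha$ is $0$ or $m\,v_{\mathfrak q}(x)$, so $(\alpha)=\mathfrak a^m$ for an integral ideal $\mathfrak a$. This holds uniformly --- in particular there is no obstruction at primes above $2$ --- which is precisely what lets us keep $S=\{\infty\}$, so that $\#S_{\mathrm{fin}}=0$.

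With \eqref{cond1} and \eqref{cond2} verified, Theorem~\ref{thmain} (with $l=2$, $n=2$, $r=1$, $S=\{\infty\}$) yields: for all but $O(N^{3/2}\log N)$ points $(x,y)\in{\bf I}(N)=T(N)$, one has $[\mathbb{Q}(P_{(x,y)}):\mathbb{Q}]=2$ and
\[
\rk_m\Cl\!\left(\mathbb{Q}\!\left(\sqrt{y^2-4x^m}\right)\right)\ \geq\ r+\#S_{\mathrm{fin}}-\rk\co_{\mathbb{Q}(P_{(x,y)}),S}^*\ =\ 1,
\]
because $\mathbb{Q}(P_{(x,y)})$ is imaginary quadratic, so $\co_{k,\{\infty\}}^*=\co_k^*$ has rank $0$. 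A subgroup $\mathbb{Z}/m\mathbb{Z}\subseteq\Cl(k)$ contains an element of order $m$, which is the assertion. The steps that require genuine care are the degree computation in \eqref{cond1} (handled above through the factorization $\alpha\bar\alpha=x^m$ on $V$) and the coprimality argument in \eqref{cond2} that makes $S=\{\infty\}$ suffice; the remainder is bookkeeping, and the exponent $3/2$ in the error term is forced by $\dim V=2$ in Theorem~\ref{thmain}.
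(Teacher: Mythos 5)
Your proposal is correct and follows essentially the same route as the paper: the same surface $z^2=y^2-4x^m$, the same function $f_1=\tfrac{1}{2}(y+z)$ with $S=\{\infty\}$, and the same application of Theorem~\ref{thmain}, with the coprimality and Kummer-degree verifications that the paper leaves as ``easy to see'' carried out explicitly (and correctly) by you.
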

\begin{proof}
Consider the surface $V$ defined by $z^2=y^2-4x^m$.  Let $x_0,y_0\in \mathbb{Z}$ be such that $z_0^2=y_0^2-4x_0^m<0$ and $(x_0,y_0)=1$.  Let $k=\mathbb{Q}(z_0)$.  Then $\frac{y_0+z_0}{2}, \frac{y_0-z_0}{2}\in \co_k$ and $(\frac{y_0+z_0}{2}, \frac{y_0-z_0}{2})=(x_0,y_0)=\co_k$.  Thus, from the factorization $(\frac{y_0+z_0}{2})(\frac{y_0-z_0}{2})=x_0^m$, we have that $(\frac{y_0+z_0}{2})=\mathfrak{a}^m$ for some ideal $\mathfrak{a}$ of $\co_k$.  Furthermore, it is easy to see that 
\begin{equation*}
\left[\Qbar(V)\left(\sqrt[m]{\frac{y+z}{2}}\right):\Qbar(V)\right]=m.
\end{equation*}
Let $\phi:V\to \mathbb{A}^2$ be the projection $(x,y,z)\mapsto (x,y)$.  Then we apply Theorem \ref{thmain} to $V$, $\phi$, ${\bf I}=T(\infty)$, $S=\{\infty\}$, and $f_1=\frac{y+z}{2}$.
\end{proof}
By Remark \ref{Hrem}, there exist integers $x_0, y_0, N\in \mathbb{Z}$ (which, from the proof in \cite{Coh2}, can also be chosen with $(x_0,y_0,N)=1$) such that if $(x,y)\in T$, $x\equiv x_0\pmod N$, $y\equiv y_0 \pmod N$, then $\mathbb{Q}\left(\sqrt{y^2-4x^m}\right)$ has an element of order $m$ in its class group.  Indeed, Yamamoto explicitly showed this in \cite{Yam}.
\begin{theorem}[Yamamoto]
Let $m>1$ be a positive integer with prime divisors $p_1,\ldots, p_s$.  Let $q_1,\ldots, q_s$ be primes such that
\begin{equation*}
q_i \equiv 
\begin{cases}
1 \pmod{p_i}, \quad \text{ if } p_i\neq 2,\\
 1 \pmod 4, \quad \text{ if } p_i=2.
\end{cases}
\end{equation*}
Let $x$ and $y$ be integers such that $(x,y)=1$, $y^2-4x^m<0$, $q_i$ divides $x$ for all $i$, and $y$ is a $p_i$-th power nonresidue mod $q_i$ for all $i$.  Assume also that $\mathbb{Q}\left(\sqrt{y^2-4x^m}\right)\neq \mathbb{Q}(\sqrt{-1}),\mathbb{Q}(\sqrt{-3})$.  Then $\Cl\left(\mathbb{Q}\left(\sqrt{y^2-4x^m}\right)\right)$ contains an element of order $m$.
\end{theorem}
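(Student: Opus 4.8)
The plan is to make the existence statement following Remark~\ref{Hrem} (and Theorem~\ref{quadcl}) completely explicit by exhibiting an ideal of $\co_k$ whose class has order exactly $m$ in $\Cl(k)$, where $k=\mathbb{Q}\left(\sqrt{y^2-4x^m}\right)$. First I would set $D=y^2-4x^m<0$, fix a square root $z=\sqrt{D}$, and put $\alpha=\frac{y+z}{2}$. Since $\alpha$ satisfies the monic integral equation $T^2-yT+x^m=0$, it lies in $\co_k$, and $\alpha\bar\alpha=x^m$ (which is $>0$ because $D<0$), $\alpha+\bar\alpha=y$. From $(x,y)=1$ one sees that the ideals $(\alpha)$ and $(\bar\alpha)$ are coprime, while $(\alpha)(\bar\alpha)=(x)^m$; comparing exponents at each prime then shows that every prime exponent occurring in $(\alpha)$ is a multiple of $m$, so $(\alpha)=\mathfrak{a}^m$ for some ideal $\mathfrak{a}$ of $\co_k$. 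In particular $[\mathfrak{a}]$ has order dividing $m$ in $\Cl(k)$, and the whole task is to rule out that this order is a proper divisor of $m$.

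Next I would analyse the primes $q_i$ in $\co_k$. Since $q_i\mid x$ and $q_i\nmid y$, we have $D\equiv y^2\not\equiv 0\pmod{q_i}$ with $q_i$ odd, so $q_i$ splits: $q_i\co_k=\mathfrak{q}_i\bar{\mathfrak{q}}_i$. Reducing $z$ modulo the two primes gives $z\equiv\pm y$, with one sign occurring at $\mathfrak{q}_i$ and the other at $\bar{\mathfrak{q}}_i$; a short valuation argument, using coprimality of $(\alpha)$ and $(\bar\alpha)$, shows that exactly one of them, say $\mathfrak{q}_i$, divides $(\alpha)$. Hence $\bar{\mathfrak{q}}_i\nmid(\alpha)$, and in $\co_k/\bar{\mathfrak{q}}_i\cong\mathbb{F}_{q_i}$ the image of $\alpha=\frac{y+z}{2}$ is the nonzero element $y\bmod q_i$ (at $\bar{\mathfrak{q}}_i$ we must have $z\equiv y$, since $z\equiv -y$ would force $\alpha\equiv 0$ there).

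Now I would argue by contradiction: suppose $[\mathfrak{a}]$ has order $m'\mid m$ with $m'<m$, so $\mathfrak{a}^{m'}=(\beta)$ for some $\beta\in\co_k$. Comparing with $\mathfrak{a}^m=(\alpha)$ gives $(\alpha)=\left(\beta^{m/m'}\right)$, hence $\alpha=u\beta^{m/m'}$ for a unit $u\in\co_k^*$; since $k$ is imaginary quadratic and $k\neq\mathbb{Q}(\sqrt{-1}),\mathbb{Q}(\sqrt{-3})$, we have $u=\pm1$, so $\alpha=\pm\beta^{m/m'}$. As $m/m'>1$, it is divisible by some $p_i$, whence $\alpha=\pm\gamma^{p_i}$ with $\gamma=\beta^{m/(m'p_i)}\in\co_k$. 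Reducing modulo $\bar{\mathfrak{q}}_i$ yields $y\equiv\pm(\gamma\bmod\bar{\mathfrak{q}}_i)^{p_i}\pmod{q_i}$, so $\pm y$ is a $p_i$-th power residue mod $q_i$. But the congruence condition on $q_i$ forces $-1$ to be a $p_i$-th power residue mod $q_i$: for $p_i=2$ this is exactly $q_i\equiv1\pmod4$, and for $p_i$ odd it follows since $q_i\equiv1\pmod{p_i}$ together with $q_i$ odd gives $q_i\equiv1\pmod{2p_i}$, so $-1=g^{(q_i-1)/2}$ (for a primitive root $g$) with $p_i\mid\frac{q_i-1}{2}$. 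Therefore $y$ itself would be a $p_i$-th power residue mod $q_i$, contradicting the hypothesis. Hence $[\mathfrak{a}]$ has order exactly $m$, and $\Cl(k)$ contains an element of order $m$.

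I expect the only genuine content to lie in the two middle steps: identifying which prime above $q_i$ divides $(\alpha)$, and passing cleanly from the ideal relation $\mathfrak{a}^{m'}=(\beta)$ to the equation $\alpha=\pm\gamma^{p_i}$. It is precisely here that the excluded fields $\mathbb{Q}(\sqrt{-1})$ and $\mathbb{Q}(\sqrt{-3})$ (so that $\co_k^*=\{\pm1\}$) and the nonresidue condition on $y$ are used, so Yamamoto's hypotheses are exactly what the argument requires. Everything else --- integrality of $\alpha$, coprimality of $(\alpha)$ and $(\bar\alpha)$, the splitting of $q_i$, and the fact that $-1$ is a $p_i$-th power residue mod $q_i$ --- is routine.
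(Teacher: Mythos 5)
Your argument is correct, and it is worth noting that the paper itself does not prove this statement at all: it is quoted as Yamamoto's theorem, cited from \cite{Yam}, as an explicit instance of what the paper's machinery only yields inexplicitly. The paper's nearby result (Theorem \ref{quadcl}) is obtained by applying Theorem \ref{thmain} to the surface $z^2=y^2-4x^m$ with $f_1=\frac{y+z}{2}$ and invoking Hilbert irreducibility, which shows that \emph{most} coprime pairs $(x,y)$ with $y^2-4x^m<0$ give a class of order $m$, and Remark \ref{Hrem} upgrades this to some unspecified arithmetic progressions; it cannot produce the explicit congruence conditions in the statement. Your proof is the classical direct one (essentially Yamamoto's own route): show $\left(\frac{y+z}{2}\right)=\mathfrak{a}^m$ from $(x,y)=1$ and the norm form factorization, then rule out order $m'<m$ by reducing $\alpha=\pm\gamma^{p_i}$ at the prime above $q_i$ not dividing $(\alpha)$, where $\alpha\equiv y$, using that the excluded fields force $\co_k^*=\{\pm1\}$ and that $-1$ is a $p_i$-th power residue mod $q_i$. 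All the steps check out: the coprimality of $(\alpha)$ and $(\bar\alpha)$, the splitting of $q_i$ (note $q_i$ is automatically odd and unramified since $q_i\nmid D$), the identification of the correct prime above $q_i$, and the passage from the ideal relation to the element relation. Two small remarks: for odd $p_i$ the fact that $-1$ is a $p_i$-th power is immediate from $-1=(-1)^{p_i}$, so the primitive-root detour is unnecessary, and the hypothesis $q_i\equiv 1\pmod{p_i}$ is really there to make ``$p_i$-th power nonresidue'' a nonvacuous condition; only $q_i\equiv 1\pmod 4$ (the case $p_i=2$) is genuinely used to neutralize the sign. The trade-off between the two approaches is the expected one: the paper's specialization technique is general and quantitative but non-explicit, while your (Yamamoto's) argument is elementary, effective, and pins down explicit families, at the cost of being tailored to this particular surface.
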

\subsection{Real quadratic fields with an element of order $m$ in the class group}
It is fairly easy to construct, for any integer $m>1$, hyperelliptic curves $C$ with a rational Weierstrass point and $\rk_m \Jac(C)(\mathbb{Q})_{\rm tors}\geq 2$.  Thus, using Corollary~\ref{chyp}, we obtain quantitative versions of results of Yamamoto and Yamamoto-Weinberger.
\begin{theorem}
\label{ri}
Let $m>1$ be an integer.  There exist $\gg X^{\frac{1}{2m-1}}/\log X$ imaginary quadratic number fields $k$ with $|d_k|<X$ and $\rk_m \Cl(k)\geq 2$ and $\gg X^{\frac{1}{2m-1}}/\log X$ real quadratic number fields $k$ with $d_k<X$ and $\rk_m \Cl(k)\geq 1$.
\end{theorem}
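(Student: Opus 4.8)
The plan is to deduce the theorem from Corollary~\ref{chyp}. By that corollary it is enough to produce, for every integer $m>1$, a nonsingular hyperelliptic curve $C$ over $\mathbb{Q}$ of genus $g=m-1$ with a rational Weierstrass point and with $\rk_m \Jac(C)(\mathbb{Q})_{\rm{tors}}\geq 2$: feeding such a $C$ into Corollary~\ref{chyp} yields $\gg X^{1/(2g+1)}/\log X = X^{1/(2m-1)}/\log X$ imaginary quadratic fields $k$ with $|d_k|<X$ and $\rk_m\Cl(k)\geq \rk_m\Jac(C)(\mathbb{Q})_{\rm{tors}}\geq 2$, and equally many real quadratic fields $k$ with $d_k<X$ and $\rk_m\Cl(k)\geq \rk_m\Jac(C)(\mathbb{Q})_{\rm{tors}}-1\geq 1$, which is precisely the assertion.

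So the content is entirely in the curve construction. I would take $C\colon y^2=f(x)$ with $f\in\mathbb{Q}[x]$ of odd degree $2m-1$, so that $\infty$ is a rational Weierstrass point and $g=m-1$. The mechanism for producing $m$-torsion is the one already present in the proofs of Corollaries~\ref{super} and~\ref{chyp}: a rational function $\phi$ on $C$ with $(\phi)=mD$ forces $[D]\in\Jac(C)(\mathbb{Q})$ to have order dividing $m$. Concretely, suppose $f=v(x)^2-q(x)^m$ with $v$ of degree $m$ and $q$ a separable quadratic, the leading terms of $v^2$ and $q^m$ cancelling so that $\deg f=2m-1$. Then $\phi=v(x)-y$ has its only pole at $\infty$, of order $2m$ (since $2\deg v=2m>2m-1=\deg f$), while from $(v-y)(v+y)=v^2-f=q^m$ one reads off that $\phi$ vanishes to order exactly $m$ at each of the two points of $C$ lying over a root of $q$ (these are non-Weierstrass for generic $f$); hence $(\phi)=mD$, where $D$ is the $\mathbb{Q}$-rational degree-zero divisor consisting of those two points minus $2(\infty)$, and $[D]$ is $m$-torsion in $\Jac(C)(\mathbb{Q})$. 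To reach rank $2$ I would arrange $f$ to admit two such representations $f=v_1^2-q_1^m=v_2^2-q_2^m$, producing torsion classes $[D_1],[D_2]$.

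The main obstacle lies entirely in this last step and is twofold. First, one must exhibit a single $f$ of degree $2m-1$ with two representations $v^2-q^m$: a parameter count (the degree-$(2m-1)$ polynomials admitting one such representation sweep out a family of dimension $m+2$ inside the $2m$-dimensional space of such polynomials, so having two representations is the expected-nonempty, in fact positive-dimensional, intersection of two copies of this family) makes this plausible, but I would pin it down by writing $f$ explicitly, chosen generically enough that $f$ is squarefree (so $C$ is nonsingular) and $q_1,q_2$ are separable with disjoint zero sets. Second — and this is the real crux — one must show that $[D_1]$ and $[D_2]$ have exact order $m$ and are independent, i.e.\ generate $(\mathbb{Z}/m\mathbb{Z})^2\subset\Jac(C)(\mathbb{Q})$. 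I would do this by reducing $C$ modulo a prime $p$ of good reduction with $p\nmid m$ and invoking the injectivity of prime-to-$p$ torsion under reduction, so that the claim becomes a finite computation in $\Jac(C_{\mathbb{F}_p})(\mathbb{F}_p)$ that the explicit choice of $f$ and $p$ is set up to pass; alternatively one can argue directly that a nontrivial relation $j_1D_1+j_2D_2\sim 0$ with $0\le j_i<m$ would produce a function on $C$ with pole only at $\infty$ of too-small order whose zero divisor, by genericity of the supports of $D_1,D_2$, fails to be stable under the hyperelliptic involution, contradicting the Weierstrass gap sequence at $\infty$ together with $\iota^*=-1$ on $\Jac(C)$. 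Once $(\mathbb{Z}/m\mathbb{Z})^2\hookrightarrow\Jac(C)(\mathbb{Q})_{\rm{tors}}$ is established, the theorem follows from the reduction in the first paragraph.
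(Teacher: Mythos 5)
Your reduction of the theorem to Corollary~\ref{chyp} is exactly the paper's: it suffices to exhibit, for each $m>1$, a genus-$(m-1)$ hyperelliptic curve over $\mathbb{Q}$ with a rational Weierstrass point and $\rk_m\Jac(C)(\mathbb{Q})_{\rm tors}\geq 2$. But that exhibition is the entire content of the theorem, and your proposal stops precisely there. You acknowledge the two missing pieces yourself: (i) you never produce a single polynomial $f$ of degree $2m-1$ admitting two representations $f=v_1^2-q_1^m=v_2^2-q_2^m$ over $\mathbb{Q}$ --- the parameter count is a dimension heuristic over $\Qbar$, and even if the intersection of the two families is nonempty and positive-dimensional it could a priori be supported on degenerate loci ($q_1=q_2$, $f$ not squarefree) or fail to have $\mathbb{Q}$-points; (ii) the independence of $[D_1],[D_2]$ is only gestured at, via an unexecuted computation mod $p$ for an unspecified curve, or a vague appeal to the gap sequence at $\infty$ and the involution, neither of which is carried out. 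Since no curve is actually written down, nothing can be verified, and the proof is incomplete at its crux.

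For comparison, the paper sidesteps both difficulties by writing the family explicitly: the normalization of $(y-az)(y-bz)z^{m-1}=x^m(y-cz)$ with $a,b,c\in\mathbb{Q}$ distinct, equivalently (completing the square) $y^2=x^{2m}+dx^m+e^2$. There the functions $y-a$ and $y-c$ visibly have divisors $mD_1$ and $mD_2$, and independence is a one-line function-field argument: a relation $iD_1+jD_2\sim 0$ with $0\leq i,j<m$ would force $(Y-a)^i(Y-c)^j$ to be an $m$-th power in $\Qbar(X,Y)$, which is impossible unless $i=j=0$ (this is the same mechanism as Lemma~\ref{LKum}). The rational Weierstrass point is then obtained inside the even-degree family by specializing, e.g.\ $d=-1-e^2$, giving $y^2=(x^m-1)(x^m-e^2)$ with the Weierstrass point $(1,0)$; no odd-degree model with two explicit representations is needed. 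If you want to salvage your odd-degree approach, the honest route is to move a rational Weierstrass point of such a curve to infinity and check what your $v_i,q_i$ become, but as written your argument has a genuine gap rather than an alternative proof.
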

\begin{proof}
We construct hyperelliptic curves $C$ of genus $m-1$ with a rational Weierstrass point and $\rk_m \Jac(C)(\mathbb{Q})_{\rm tors}\geq 2$.  Let $C$ be a projective curve in $\mathbb{P}^2$ defined by $(y-az)(y-bz)z^{m-1}=x^m(y-cz)$ with $a,b,c\in \mathbb{Q}$ all distinct.  This curve has a singularity only at $(0,1,0)$.  Let $\pi:\tilde{C}\to C$ be a normalization.  It is easy to see that $\pi$ is bijective on points.  Let $P_0=\pi^{-1}((0,1,0))$, $P_1=\pi^{-1}((0,a,1))$, and $P_2=\pi^{-1}((1,0,0))$.  Let $D_1=P_1-P_0$ and $D_2=P_2-P_0$.  Then it is easily checked that $(\frac{y-az}{z}\circ \pi)=mD_1$ and $(\frac{y-cz}{z}\circ \pi)=mD_2$.  Suppose that $iD_1+jD_2$ is principal for some $0\leq i,j<m$.  This implies that $(Y-a)^i(Y-c)^j=f^m$ for some $f$ in the function field $\Qbar(X,Y)$, where $X=\frac{x}{z}\circ \pi$ and $Y=\frac{y}{z}\circ \pi$.  This is clearly impossible unless $i=j=0$.  Thus $\rk_m \Jac(\tilde{C})(\mathbb{Q})_{\rm tors}\geq 2$.  Completing the square in $y$, we see that the set of curves $C$ for distinct $a,b,c\in \mathbb{Q}$ is the same as the set of hyperelliptic curves given by $y^2=x^{2m}+dx^m+e^2$, $d,e\in \mathbb{Q}$, $e\neq 0$, $d^2\neq 4e^2$.  Such curves may have rational Weierstrass points (e.g., curves with $d=-1-e^2$), and so the result follows from Corollary \ref{chyp}.
\end{proof}
The exponent $\frac{1}{2m-1}$ in Theorem \ref{ri} is not as good as the exponent $\frac{1}{m}$ (or $\frac{1}{m}-\epsilon$) achieved in the papers of Byeon \cite{Bye3}, Luca \cite{Luca2}, and Yu \cite{Yu}.  However, the case of Theorem \ref{ri} where $k$ is imaginary quadratic and $n$ is even does not appear to have been previously covered, and so here Theorem \ref{ri} does give a new result.  

To obtain their results, Byeon and Yu looked at the binary form $\frac{3}{4}(3x^m+y^m)(x^m+3y^m)$.  In the hope of improving their results (possibly to an exponent of $\frac{3}{2m}-\epsilon$), we prove a theorem involving a ternary form.
\begin{theorem}
Let $m>1$ be a positive integer.  Let 
\begin{equation*}
f(x,y,z)=x^{2m}+y^{2m}+z^{2m}-2x^my^m-2x^mz^m-2y^mz^m.
\end{equation*}
For all but $O(N^{\frac{5}{2}}\log N)$ elements $(x,y,z)\in \mathbb{Z}^3$ with $|x|, |y|, |z|<N$ and $(x^m-y^m,z)=(x^m-z^m,y)=(y^m-z^m,x)=1$,
\begin{equation*}
\rk_m \Cl\left(\mathbb{Q}\left(\sqrt{f(x,y,z)}\right)\right)\geq
\begin{cases}
2 & \text{ if }f(x,y,z)<0,\\
1 & \text{ if }f(x,y,z)>0.
\end{cases}
\end{equation*}
\end{theorem}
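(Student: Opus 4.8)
\noindent The plan is to invoke Theorem~\ref{thmain} for the affine threefold $V\subset\mathbb{A}^4$ defined by $w^2=f(x,y,z)$, together with the projection $\phi\colon V\to\mathbb{A}^3$, $(x,y,z,w)\mapsto(x,y,z)$, which has degree $n=2$; since $\mathbb{Q}(P_{(x,y,z)})=\mathbb{Q}(\sqrt{f(x,y,z)})$ this is exactly the setting of Theorem~\ref{thmain}, with $l=3$ producing the error term $O(N^{5/2}\log N)$. Here $w^2-f$ is irreducible over $\Qbar$ because $f$ is not a perfect square in $\Qbar[x,y,z]$ (visible from the factorization below), so $V$ is a variety. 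I would take $S=\{\infty\}$, so $\#S_{\mathrm{fin}}=0$, and
\[
{\bf I}=\{(x,y,z)\in\mathbb{Z}^3 : xyz\neq0,\ (x^m-y^m,z)=(x^m-z^m,y)=(y^m-z^m,x)=1\};
\]
the $O(N^2)$ triples with $xyz=0$ are harmlessly absorbed into the exceptional set. With $r=2$, inequality~\eqref{maineq2} reads $\rk_m\Cl(\mathbb{Q}(\sqrt f))\geq 2-\rk\co^*_{\mathbb{Q}(\sqrt f)}$, which is $\geq2$ when $f<0$ (imaginary quadratic, unit rank $0$) and $\geq1$ when $f>0$ (real quadratic, unit rank $1$); the conclusion $[\mathbb{Q}(\sqrt f):\mathbb{Q}]=2$ discards the square values of $f$. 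This is exactly the assertion, so everything reduces to producing $f_1,f_2$ satisfying \eqref{cond1} and \eqref{cond2}.

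These come from the Heron-type identity $f=(x^m-y^m-z^m)^2-4y^mz^m$ and its two companions under permuting $x^m,y^m,z^m$. Put $h_1=\tfrac12(w+x^m-y^m-z^m)$, $h_2=\tfrac12(w-x^m+y^m-z^m)$, and $\bar h_j=w-h_j$, so that $h_1\bar h_1=-(yz)^m$, $h_2\bar h_2=-(xz)^m$, $h_1-\bar h_1=x^m-y^m-z^m$, and $h_2-\bar h_2=y^m-x^m-z^m$; I take $f_1=h_1$, $f_2=h_2$. For \eqref{cond2}, fix $(x_0,y_0,z_0)\in{\bf I}$, set $k=\mathbb{Q}(\sqrt{f(x_0,y_0,z_0)})$ and $P=(x_0,y_0,z_0,w_0)$. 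Since $h_j(P)+\bar h_j(P)=w_0$ is integral and $h_j(P)\bar h_j(P)\in\mathbb{Z}$, both $h_j(P),\bar h_j(P)$ are roots of a monic quadratic over $\mathbb{Z}[w_0]$, hence lie in $\co_k$, and $xyz\neq0$ keeps them nonzero. If a prime $\mathfrak p$ of $\co_k$ divided both $h_1(P)$ and $\bar h_1(P)$, it would divide $(y_0z_0)$ and $(x_0^m-y_0^m-z_0^m)$, hence would divide either $(y_0)$ and $(x_0^m-z_0^m)$, or $(z_0)$ and $(x_0^m-y_0^m)$, contradicting $(x^m-z^m,y)=1$ or $(x^m-y^m,z)=1$; thus $(h_1(P))$ and $(\bar h_1(P))$ are coprime with $m$-th power product, forcing $(h_1(P))\co_k=\mathfrak a_1^m$. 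The identical argument for $h_2$ uses $(y^m-z^m,x)=1$ and $(x^m-y^m,z)=1$. So \eqref{cond2} holds with $S=\{\infty\}$.

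The crux is \eqref{cond1}, namely that $h_1,h_2$ span a subgroup of order $m^2$ in $\Qbar(V)^*/(\Qbar(V)^*)^m$ (equivalent to \eqref{cond1} by Kummer theory, as in Lemma~\ref{LKum}). I would prove this by restriction to two well-chosen prime divisors of $V$: the sets $X^{+}=\{x=0,\,w=y^m-z^m\}$ and $Y^{+}=\{y=0,\,w=x^m-z^m\}$ lie on $V$, are irreducible of codimension one, isomorphic to $\mathbb{A}^2$, and have generic points that are smooth on $V$ (the singular locus sits inside $\{w=0\}$); moreover $h_1,h_2$ restrict to $-z^m,\ y^m-z^m$ on $X^{+}$ and to $x^m-z^m,\ -z^m$ on $Y^{+}$. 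Now if $h_1^ah_2^b=g^m$ in $\Qbar(V)^*$ with $0\leq a,b<m$, then since $h_1,h_2$ are units in the local ring of $V$ at the generic point of $X^{+}$, so is $g^m$ and hence $g$; restricting gives $(g|_{X^{+}})^m=(-z^m)^a(y^m-z^m)^b$ in $\Qbar(y,z)$, and as $-z^m$ is an $m$-th power there while $y^m-z^m=\prod_{j}(y-\zeta_m^jz)$ is a product of $m$ pairwise non-associate primes of $\Qbar[y,z]$, we get $m\mid b$, i.e.\ $b=0$; restricting to $Y^{+}$ forces $a=0$ likewise. Hence $h_1,h_2$ are independent, \eqref{cond1} holds, and Theorem~\ref{thmain} finishes the proof.

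The steps that are not pure bookkeeping with the Heron identities and elementary ideal arithmetic are the two choices: extracting $h_1,h_2$ from the factorization of $f$, and — the place I expect the real work — locating the prime divisors $X^{+},Y^{+}$ on which these two functions degenerate to a single coordinate power and a difference of coordinate powers, which is what makes the independence argument for \eqref{cond1} go through cleanly.
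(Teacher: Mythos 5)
Your proposal is correct and is essentially the paper's proof: the same threefold $w^2=f(x,y,z)$ with the degree-two projection to $\mathbb{A}^3$, the same two Heron-type factorizations of $f$ (you use a permuted pair of the three symmetric ones, which is immaterial), the same coprimality-plus-integrality argument giving \eqref{cond2} with $S=\{\infty\}$, and the same application of Theorem \ref{thmain} with the unit-rank dichotomy for imaginary versus real quadratic fields. The only point where you diverge is the verification of \eqref{cond1}: the paper passes to the birational model $(v'+y^m)(v'+z^m)=v'x^m$ (where $v'$ is precisely your $h_1$) and reuses the unique-factorization argument from the proof of Theorem \ref{ri}, whereas you restrict a putative relation $h_1^ah_2^b=g^m$ to the two divisors $\{x=0,\,w=y^m-z^m\}$ and $\{y=0,\,w=x^m-z^m\}$, using smoothness of $V$ at their generic points and unique factorization in $\Qbar[y,z]$. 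Both verifications are valid; yours is self-contained and avoids the coordinate change, while the paper's makes the link to the earlier curve computation explicit, and your smoothness/DVR caveat is exactly the point one must check for the restriction step to be legitimate.
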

\begin{proof}
Consider the threefold $V$ defined by $v^2=f(x,y,z)$ and the two factorizations
\begin{align}
\left(\frac{v+(x^m+y^m-z^m)}{2}\right)\left(\frac{v-(x^m+y^m-z^m)}{2}\right)=-x^my^m,\label{fac1}\\
\left(\frac{v+(x^m-y^m+z^m)}{2}\right)\left(\frac{v-(x^m-y^m+z^m)}{2}\right)=-x^mz^m.\label{fac2}
\end{align}
Let $x_0,y_0,z_0\in \mathbb{Z}$ with $(x_0^m-y_0^m,z_0)=(x_0^m-z_0^m,y_0)=(y_0^m-z_0^m,x_0)=1$.  Let $v_0=\sqrt{f(x_0,y_0,z_0)}$ and $k=\mathbb{Q}(v_0)$.  The coprimality conditions immediately imply that $(v_0,x_0)=(v_0,y_0)=(v_0,z_0)=\co_k$ and
\begin{align*}
\left(\frac{v_0+(x_0^m+y_0^m-z_0^m)}{2},\frac{v_0-(x_0^m+y_0^m-z_0^m)}{2}\right)=\co_k\\
\left(\frac{v_0+(x_0^m-y_0^m+z_0^m)}{2},\frac{v_0-(x_0^m-y_0^m+z_0^m)}{2}\right)=\co_k.
\end{align*}
It follows that $\left(\frac{v_0+(x_0^m+y_0^m-z_0^m)}{2}\right)$ and $\left(\frac{v_0+(x_0^m-y_0^m+z_0^m)}{2}\right)$ are both $m$-th powers of ideals of $\co_k$.  Moreover, by looking at the threefold $(v'+y^m)(v'+z^m)=v'x^m$ birational to $V$, the proof of Theorem \ref{ri} easily translates to a proof that
\begin{equation*}
\left[\Qbar(V)\left(\sqrt[m]{\frac{v+x^m+y^m-z^m}{2}},\sqrt[m]{\frac{v+x^m-y^m+z^m}{2}}\right):\Qbar(V)\right]=m^2.
\end{equation*}
So an appropriate application of Theorem \ref{thmain} gives the desired result.
\end{proof}

\subsection{$3$-ranks of quadratic fields and constructions of Craig}
In \cite{Cra} and \cite{Cra2}, Craig constructed infinitely many imaginary quadratic fields $k$ with $\rk_3\Cl(k)\geq 3$ and $\rk_3\Cl(k)\geq 4$, respectively, and infinitely many real quadratic fields $k$ with $\rk_3\Cl(k)\geq 2$ and $\rk_3\Cl(k)\geq 3$, respectively.  We prove quantitative versions of Craig's theorems and show how his constructions yield hyperelliptic curves $C$ with Jacobians having large rational $3$-torsion subgroups.

We begin with the constructions in \cite{Cra}.  Let
\begin{equation*}
f(x,y,z)=x^{6}+y^{6}+z^{6}-2x^3y^3-2x^3z^3-2y^3z^3,
\end{equation*}
the function of the last section for $m=3$.
Then if $v^2=f(x,y,z)$, we have the two factorizations \eqref{fac1} and \eqref{fac2}.  As noticed by Craig, if 
\begin{equation}
\label{Cr1}
2(x^3+y^3)=z^3+w^3,
\end{equation}
then we get the third factorization
\begin{equation}
\label{fac3}
\left(\frac{v+(x^3-y^3+w^3)}{2}\right)\left(\frac{v-(x^3-y^3+w^3)}{2}\right)=-x^3w^3.
\end{equation}
There are two more such factorizations, but they will not lead to any new ideal classes.  A parametric solution to \eqref{Cr1} is given by
\begin{align}
\label{Cr2}
&x=18s^4, &y=3s(t^3-6s^3),\\
&z=t^4, &w=t(18s^3-t^3).
\label{Cr3}
\end{align}
Let $F(s,t)=f(x(s,t),y(s,t),z(s,t))$.  Explicitly,
\begin{multline*}
F(s,t)= t^{24} - 54 s^3 t^{21}+ 1701 s^6t^{18} - 32076  s^9t^{15} +  393660  s^{12}t^{12}\\
    {}- 3464208  s^{15}t^9 + 19840464  s^{18}t^6 - 68024448 s^{21} t^3 + 136048896 s^{24}.
\end{multline*}
For many values of $s$ and $t$, $\rk_3 \Cl\left(\mathbb{Q}\left(\sqrt{F(s,t)}\right)\right)\geq 3- \rk \co_{\mathbb{Q}\left(\sqrt{F(s,t)}\right)}^*$.
\begin{theorem}
\label{LCra1}
Let 
\begin{equation*}
T=\{(s,t)\in \mathbb{Z}^2\mid (3s,t)=1, (s,2)=2, (s+2^it,7)=1, i=0,1,2\}
\end{equation*}
and let $T(N)=\{(s,t)\in T\mid |s|,|t|<N\}$.  Then for all but $O\left(N^{\frac{3}{2}}\log N\right)$ pairs $(s,t)\in T(N)$,
\begin{equation*}
\rk_3 \Cl\left(\mathbb{Q}\left(\sqrt{F(s,t)}\right)\right)\geq
\begin{cases}
3 & \text{ if }F(s,t)<0,\\
2 & \text{ if }F(s,t)>0.
\end{cases}
\end{equation*}
Furthermore, there are $\gg X^{\frac{1}{24}}/\log X$ imaginary quadratic fields $k$ with $|d_k|<X$ and $\rk_3 \Cl(k)\geq 3$ and $\gg X^{\frac{1}{24}}/\log X$ real quadratic fields  $k$ with $d_k<X$ and $\rk_3 \Cl(k)\geq 2$.
\end{theorem}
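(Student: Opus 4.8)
The plan is to apply Theorem~\ref{thmain} to the surface $V'\subset\mathbb{A}^3$ defined by $v^2=F(s,t)$, with projection $\phi\colon V'\to\mathbb{A}^2$, $(s,t,v)\mapsto(s,t)$, of degree $n=2$ --- here one first checks that $F$ is not a square in $\Qbar(s,t)$, so that $V'$ is a variety and $\deg\phi=2$. Substituting the parametrization \eqref{Cr2}--\eqref{Cr3} turns $x,y,z,w$ into polynomials in $s,t$ and converts the three factorizations \eqref{fac1}, \eqref{fac2}, \eqref{fac3} into rational functions
\begin{equation*}
f_1=\tfrac{v+(x^3+y^3-z^3)}{2},\qquad f_2=\tfrac{v+(x^3-y^3+z^3)}{2},\qquad f_3=\tfrac{v+(x^3-y^3+w^3)}{2}
\end{equation*}
on $V'$, for which the products $f_1f_1'=-x^3y^3$, $f_2f_2'=-x^3z^3$, $f_3f_3'=-x^3w^3$ (where $f_j'$ is the image of $f_j$ under $v\mapsto -v$) are all cubes. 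I would then invoke Theorem~\ref{thmain} with $V=V'$, this $\phi$, $m=3$, $r=3$, ${\bf I}=T$, and $S=\{\infty\}$. Since $S$ has no finite places and $\rk\co_{\mathbb{Q}(\sqrt{F(s,t)})}^*$ equals $0$ if $F(s,t)<0$ and $1$ if $F(s,t)>0$, \eqref{maineq2} reduces to exactly the claimed bound ($\rk_3\Cl\ge 3$, resp.\ $\ge 2$), with exceptional set $O(N^{3/2}\log N)$ because $\dim V'=2$. It remains to verify the two hypotheses \eqref{cond2} and \eqref{cond1}.

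For \eqref{cond2}, fix $(s_0,t_0)\in T$, put $k=\mathbb{Q}(\sqrt{F(s_0,t_0)})$, and abbreviate $x_0=x(s_0,t_0)$, etc. The congruences defining $T$ translate --- just as the coprimality conditions translate in the ternary-form theorem preceding this subsection --- into the statement that in each of \eqref{fac1}--\eqref{fac3} the two factors on the left generate coprime ideals of $\co_k$; since their product is $(-x_0y_0)^3$, $(-x_0z_0)^3$, or $(-x_0w_0)^3$, each left factor generates the cube of an integral ideal, so \eqref{cond2} holds with $S=\{\infty\}$. (Craig's choice of $2,3,7$ is what makes this go through: $7\equiv 1\pmod 3$, the conditions $7\nmid s+2^it$ force the right cubic nonresidue behaviour, and $2\mid s$ with $(3s,t)=1$ control the primes dividing $18=2\cdot 3^2$.)

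The crux is \eqref{cond1}: that $f_1,f_2,f_3$ are independent in $\Qbar(V')^*/(\Qbar(V')^*)^3$. First note that Craig's relation \eqref{Cr1} is exactly what makes \eqref{fac3} valid on $V'$ --- it forces $f(x,y,z)=f(x,y,w)$, so $v^2=F$ may also be written $v^2=f(x,y,w)$ --- and therefore $f_3$ genuinely involves the new variable $w$ and is not a formal consequence of $f_1,f_2$. I would prove independence by a divisor computation: a relation $f_1^{i_1}f_2^{i_2}f_3^{i_3}=g^3$ with $0\le i_j<3$, combined with the fact that each $f_jf_j'$ is a cube, forces congruences modulo $3$ among the orders of $f_1,f_2,f_3$ along the prime divisors of (a normalization of) $V'$ lying over $s=0$, $t=0$, $t^3=6s^3$, $t^3=18s^3$ (the zero loci of $x,z,y,w$) and over infinity; one then checks that the resulting homogeneous $\mathbb{Z}/3\mathbb{Z}$-linear system has only the zero solution. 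The main obstacle here --- indeed the only delicate point --- is ruling out the possibility that Craig's very special parametrization introduces an accidental cube relation not forced by the formal shape of \eqref{fac1}--\eqref{fac3}; making the bookkeeping clean is the real work. A robust alternative is to use that the Kummer degree can only drop under specialization: it suffices to specialize $t$ to a constant (getting a curve $v^2=F(s,t_0)$) and to check directly at one such value that the three restricted functions remain cube-independent, which then forces the generic degree to be $27$.

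Granting \eqref{cond1} and \eqref{cond2}, Theorem~\ref{thmain} gives the first assertion, and the count follows by restricting to a line. Because $F(s,t)$ is a binary form of degree $24$ that takes negative values on a nonempty open cone --- visible from $f(x,y,z)=(z^3-(x^{3/2}+y^{3/2})^2)(z^3-(x^{3/2}-y^{3/2})^2)$, equivalently from $F=(x^3-y^3)^2-z^3w^3$ --- one can choose integers $a,b$ with $b\ne 0$ so that the line $t=as+b$ meets the congruences of $T$ along a full arithmetic progression in $s$ and $F(s,as+b)<0$ for all large such $s$; taking instead $|s|$ large on $t=1$ makes $F>0$. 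On the curve $C\colon v^2=F(s,as+b)$ --- with $F(s,as+b)$ of degree $24$ in $s$, hence $|d_{\mathbb{Q}(\sqrt{F(s,as+b)})}|=O(s^{24})$ --- condition \eqref{cond1} persists over $\Qbar(C)$ for a generic such line (again by semicontinuity of the Kummer degree), so Theorem~\ref{thmain} with $l=1$ shows that all but $O(\sqrt N)$ of the admissible $s\le N$ give a quadratic field of the required type. By Theorem~\ref{Zan} (applied along that arithmetic progression) there are $\gg N/\log N$ distinct such fields, and since their discriminants are $O(N^{24})$, setting $X\asymp N^{24}$ yields $\gg X^{1/24}/\log X$ imaginary quadratic fields with $\rk_3\Cl(k)\ge 3$ and, from the positive choice, $\gg X^{1/24}/\log X$ real quadratic fields with $\rk_3\Cl(k)\ge 2$, as asserted.
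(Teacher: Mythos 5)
Your overall architecture is the paper's: apply Theorem \ref{thmain} to the surface $v^2=F(s,t)$ with the three half-integral factors coming from \eqref{fac1}--\eqref{fac3}, get \eqref{cond2} with $S=\{\infty\}$ from the coprimality encoded in $T$ (the paper cites Craig's Lemma 0 for exactly that translation), and obtain the $X^{1/24}$ count by restricting to an arithmetic progression on which $F<0$ and invoking Theorem \ref{Zan}. But at what you yourself call the crux, the verification of \eqref{cond1}, there is a genuine gap. Your primary plan --- extract a homogeneous $\mathbb{Z}/3\mathbb{Z}$-linear system from the orders of $g_1,g_2,g_3$ along prime divisors over $s=0$, $t=0$, $t^3=6s^3$, $t^3=18s^3$ and at infinity --- cannot work: precisely because each $g_jg_j'$ is a cube of a polynomial and $g_j,g_j'$ share no common component, \emph{every} valuation $\ord_W g_j$ is divisible by $3$ (this is the same fact that gives \eqref{cond2}), so the system you would write down is identically zero and is satisfied by all exponent vectors. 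Writing $(g_j)=3D_j$, independence modulo cubes is equivalent to the classes $[D_1],[D_2],[D_3]$ generating $(\mathbb{Z}/3\mathbb{Z})^3$ in the Picard group, and no amount of support/valuation bookkeeping can detect whether a nontrivial combination $i_1D_1+i_2D_2+i_3D_3$ happens to be principal --- which is exactly the ``accidental relation'' you worry about.

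Your fallback --- specialize one variable and check independence on the resulting curve, then use that the Kummer degree can only drop under specialization --- is precisely the paper's route (it sets $s=1$ and works on the genus-$11$ hyperelliptic curve $v^2=F(1,t)$), but the check itself is the entire content of the step, and you never say how to perform it. The paper does it by an explicit computation in $\Jac(C)(\mathbb{Q})$ using Cantor's algorithm (in Magma), verifying that $D_1,D_2,D_3$ generate a rank-three $3$-torsion subgroup; without that computation, or some substitute argument, the proof is incomplete. Two smaller points: your coprimality step should be tied to the specific conditions $(3s,t)=1$, $(s,2)=2$, $(s+2^it,7)=1$ (as in Craig) rather than asserted by analogy with the ternary-form theorem; and for the count, choosing a ``generic'' line $t=as+b$ requires an extra argument that some line meets the congruence and sign conditions while avoiding the degeneracy locus --- the paper sidesteps this by staying on the curve $v^2=F(1,t)$, where independence has already been verified, and encoding the progression $s=84u-18$, $t=168u-35$ through the rational function $\frac{18t-35}{84t-168}$ via homogeneity of $F$.
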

\begin{proof}
Let $x=x(s,t)$, $y=y(s,t)$, $z=z(s,t)$, and $w=w(s,t)$ as in \eqref{Cr2} and \eqref{Cr3}.  Let $V$ be the variety defined by $v^2=F(s,t)$.  Then we have the three identities \eqref{fac1}, \eqref{fac2}, and \eqref{fac3}.  Furthermore, the condition that each pair on the left-hand sides of \eqref{fac1}, \eqref{fac2}, and \eqref{fac3} be coprime is easily seen to be equivalent to the conditions $(3s,t)=1$, $(s,2)=2$, and $(s+2^it,7)=1$, $i=0,1,2$ (see \cite[Lemma 0]{Cra}).  Let $g_1=\frac{v+x^3+y^3-z^3}{2}$, $g_2=\frac{v+x^3-y^3+z^3}{2}$, and $g_3=\frac{v+x^3-y^3+w^3}{2}$.  Thus, if $(s,t)\in T$ and $v^2=F(s,t)$, then each of the ideals $(g_1(s,t,v))$, $(g_2(s,t,v))$, and $(g_3(s,t,v))$ is the cube of an ideal in $\co_{\mathbb{Q}(v)}$.  To show the first part of the theorem, it remains to show that
\begin{equation}
\label{Crdeg}
\left[\Qbar(V)\left(\sqrt[3]{g_1},\sqrt[3]{g_2},\sqrt[3]{g_3}\right):\Qbar(V)\right]=27.
\end{equation}
Actually, something stronger is true.  Let $C$ be the (nonsingular projective model of the) hyperelliptic curve defined by $v^2=F(1,t)$.  Letting $s=1$, we have the three rational functions $g_1,g_2,g_3\in \mathbb{Q}(C)$.  It follows easily from \eqref{fac1}, \eqref{fac2}, and \eqref{fac3} that $(g_1)=3D_1$, $(g_2)=3D_2$, and $(g_3)=3D_3$ for some degree zero divisors $D_1$, $D_2$, and $D_3$ on $C$.  Moreover, we claim that $D_1$, $D_2$, and $D_3$ generate a subgroup $(\mathbb{Z}/3\mathbb{Z})^3\subset \Jac(C)(\mathbb{Q})$.  For hyperelliptic curves, there is an efficient algorithm (due to Cantor \cite{Can}) for adding points in $\Jac(C)$.  This has been implemented, for instance, in the computer algebra system Magma \cite{Magma}.  Using Cantor's algorithm, it is easily verified in Magma (taking a few seconds) that $D_1$, $D_2$, and $D_3$ generate a $3$-torsion subgroup of rank three in $\Jac(C)(\mathbb{Q})$, as claimed.  It follows from Lemma  \ref{LKum} that \eqref{Crdeg} holds.  So appropriately applying Theorem \ref{thmain} gives the first part of the theorem.

For the last statement of the theorem, we begin by noting that $F(1,2)=-1228544<0$.  It follows that for all sufficiently large $u$, $F(84u-18,168u-35)<0$ (note that $F$ is homogeneous in $s$ and $t$).  Furthermore, if $s=84u-18$, $t=168u-35$, and $u\in \mathbb{Z}$, then $(3s,t)=1$, $(s,2)=2$, and $(s+2^it,7)=1$, $i=0,1,2$.  Consider the rational function $\phi(t,v)=\frac{18t-35}{84t-168}$ on $C$.  Solving $\phi(t,v)=u$ gives $t=\frac{168u-35}{84u-18}$.  Then it follows from all of the above that by applying Theorem \ref{thmain} to the curve $C$ and the rational functions $\phi$, $g_1(1,t,v)$, $g_2(1,t,v)$, and $g_3(1,t,v)$, we obtain infinitely many imaginary quadratic fields $k$ with $\rk_3 \Cl(k)\geq 3$.  More precisely, using Theorem \ref{Zan}, since $\deg F(1,t)=24$, we obtain $\gg X^{\frac{1}{24}}/\log X$ imaginary quadratic fields $k$ with $|d_k|<X$ and $\rk_3 \Cl(k)\geq 3$.  The proof of the statement for real quadratic fields is even easier. 
\end{proof}
We now examine the constructions of Craig from \cite{Cra2}.  The idea in \cite{Cra2} is to find a nontrivial parametric family of solutions to the equations
\begin{equation*}
f(x_0,y_0,z_0)=f(x_1,y_1,z_1)=f(x_2,y_2,z_2).
\end{equation*}
Since 
\begin{equation*}
f(x,y,z)=(x^3+y^3-z^3)^2-4x^3y^3=(x^3-y^3+z^3)^2-4x^3z^3=(-x^3+y^3+z^3)^2-4y^3z^3,
\end{equation*}
it suffices to find solutions to
\begin{align}
x_1z_1=x_0z_0, \quad x_2y_2=x_0y_0,\label{Cr4}\\
x_1^3-y_1^3+z_1^3=-(x_0^3-y_0^3+z_0^3),\label{Cr5}\\
x_2^3+y_2^3-z_2^3=-(x_0^3+y_0^3-z_0^3).\label{Cr6}
\end{align}
Craig gives a two-parameter family of solutions to \eqref{Cr4}, \eqref{Cr5}, and \eqref{Cr6} in terms of $\alpha$, $\beta$, and $\gamma$ satisfying $\alpha+\beta+\gamma=0$.  We refer the reader to \cite{Cra2} for the rather involved formulas.  We specialize Craig's solution by setting $\alpha=0$, $\beta=t$, and $\gamma=-t$.  This gives a polynomial $h(t)=f(x_0(t),y_0(t),z_0(t))$ of degree $141$.  Let $C$ be the (nonsingular projective model of the) hyperelliptic curve defined by $Y^2=h(t)$.  We have the four identities (where $x_0=x_0(t)$, $y_0=y_0(t)$, etc.),
\begin{align*}
\left(Y+(x_0^3+y_0^3-z_0^3)\right)\left(Y-(x_0^3+y_0^3-z_0^3)\right)=-4x_0^3y_0^3,\\
\left(Y+(x_0^3-y_0^3+z_0^3)\right)\left(Y-(x_0^3-y_0^3+z_0^3)\right)=-4x_0^3z_0^3,\\
\left(Y+(x_1^3+y_1^3-z_1^3)\right)\left(Y-(x_1^3+y_1^3-z_1^3)\right)=-4x_1^3y_1^3,\\
\left(Y+(-x_2^3+y_2^3+z_2^3)\right)\left(Y-(-x_2^3+y_2^3+z_2^3)\right)=-4y_2^3z_2^3.
\end{align*}
It follows that there are divisors $D_1$, $D_2$, $D_3$, and $D_4$ on $C$ such that 
\begin{align*}
(Y+x_0^3+y_0^3-z_0^3)=3D_1,\\
(Y+x_0^3-y_0^3+z_0^3)=3D_2,\\
(Y+x_1^3+y_1^3-z_1^3)=3D_3,\\
(Y-x_2^3+y_2^3+z_2^3)=3D_4.
\end{align*}
Using Magma, it is easy to verify that $D_1$, $D_2$, $D_3$, and $D_4$ give independent $3$-torsion elements of $\Jac(C)(\mathbb{Q})$ (to simplify calculations, this can be done modulo $p=7$, a prime of good reduction of $C$).  Thus, we arrive at the following result.
\begin{theorem}
Let $C$ be the  hyperelliptic curve defined by $Y^2=h(t)$.  Then $\rk_3 \Jac(C)(\mathbb{Q})_{\rm{tors}}\geq 4$.
\end{theorem}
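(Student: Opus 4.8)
The four factorization identities displayed just above the statement already do the geometric work, so the plan is to extract their formal consequences and then reduce the one substantive point — linear independence — to a computation over a finite field. Write $g_1,g_2,g_3,g_4$ for the rational functions on $C$ appearing on the left of those four identities, so that $(g_i)=3D_i$ as displayed. Since each $g_i\in\mathbb{Q}(C)$ and, as the displayed relations record, all multiplicities of $(g_i)$ are divisible by $3$, the divisor $D_i=\tfrac13(g_i)$ is a well-defined $\mathbb{Q}$-rational divisor of degree $0$, and $3D_i$ being principal gives $[D_i]\in\Jac(C)(\mathbb{Q})[3]$ (here $C$ has a rational Weierstrass point at infinity, $\deg h=141$ being odd, so $\Pic^0(C)(\mathbb{Q})=\Jac(C)(\mathbb{Q})$ and there is no subtlety about classes being represented by rational divisors). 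Thus the theorem is equivalent to the assertion that $[D_1],[D_2],[D_3],[D_4]$ span a subgroup isomorphic to $(\mathbb{Z}/3\mathbb{Z})^4$ in $\Jac(C)(\mathbb{Q})$.

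To prove independence I would reduce modulo a prime $p$ of good reduction with $p\notin\{2,3\}$; following the hint in the text I would take $p=7$, which one justifies by checking that $7\nmid\Disc(h)$ and that the leading coefficient of $h$ is a $7$-adic unit, so that $Y^2=h(t)$ reduces to a smooth hyperelliptic curve $\overline{C}/\mathbb{F}_7$ of the same genus $g=70$. The reduction map $\Jac(C)(\mathbb{Q})_{\rm{tors}}\to\Jac(\overline{C})(\mathbb{F}_7)$ is injective on the prime-to-$7$ torsion, since the kernel of reduction is a pro-$7$ group (the points of a formal group over $\mathbb{Z}_7$). Hence it suffices to check that the reductions $\overline{D}_1,\dots,\overline{D}_4$ generate a subgroup of order $81$ in $\Jac(\overline{C})(\mathbb{F}_7)$.

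The remaining step is explicit, and I would carry it out as follows. Substitute $\alpha=0$, $\beta=t$, $\gamma=-t$ into Craig's two-parameter solution of \eqref{Cr4}, \eqref{Cr5}, \eqref{Cr6} from \cite{Cra2} to produce $x_0(t),y_0(t),z_0(t)$ and the analogous ``$1$'' and ``$2$'' tuples, form $h(t)=f(x_0,y_0,z_0)$, and reduce its coefficients mod $7$. Each $D_i$ is supported on the points of $\overline{C}$ at which $Y$ equals the signed sum of cubes appearing on the right of the $i$-th identity, so one can write $\overline{D}_i$ in Mumford reduced-divisor form; then, using Cantor's algorithm \cite{Can} for addition in the Jacobian of a hyperelliptic curve as implemented in Magma \cite{Magma}, compute $c_1\overline{D}_1+c_2\overline{D}_2+c_3\overline{D}_3+c_4\overline{D}_4$ for all $(c_1,c_2,c_3,c_4)\in\{0,1,2\}^4$ and verify that the resulting class is trivial only when $(c_1,c_2,c_3,c_4)=(0,0,0,0)$. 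This yields $\rk_3\Jac(\overline{C})(\mathbb{F}_7)\geq 4$, hence $\rk_3\Jac(C)(\mathbb{Q})_{\rm{tors}}\geq 4$.

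I expect the main obstacle to be entirely computational: faithfully specializing Craig's rather involved formulas, keeping the degree-$141$ polynomial $h$ and the genus-$70$ Jacobian manageable, and presenting the $\overline{D}_i$ in a form on which Cantor's algorithm runs quickly. The only genuinely mathematical care needed is certifying that $p=7$ is a prime of good reduction and invoking injectivity of reduction on the $3$-part of the torsion; granting that, the rest is routine. (An alternative would be to argue independence directly over $\mathbb{Q}$, Kummer-theoretically in the style of the proof of Theorem \ref{ri}: translate $\sum_i c_i[D_i]=0$ into ``$\prod_i g_i^{c_i}$ is a constant times a cube in $\mathbb{Q}(C)$'' and then take norms down to $\mathbb{Q}(t)$ to obtain a multiplicative relation among the cubes $x_0^3,y_0^3,z_0^3,\dots$ modulo cubes; with Craig's parametrization this looks considerably messier than the mod-$7$ check.)
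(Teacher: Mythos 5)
Your proposal is correct and follows essentially the same route as the paper: the four factorization identities give divisors $D_1,\dots,D_4$ with $3D_i$ principal, and independence of the classes is then verified computationally with Cantor's algorithm in Magma after reducing modulo the good prime $p=7$, exactly as the paper indicates. You merely spell out the standard justifications (injectivity of reduction on prime-to-$7$ torsion, and the rational Weierstrass point identifying $\Pic^0$ classes with $\Jac(C)(\mathbb{Q})$) that the paper leaves implicit.
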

Since $h$ has odd degree, $C$ has a rational Weierstrass point, and so Corollary~\ref{chyp} applies.
\begin{corollary}
\label{LCra2}
There are $\gg X^{\frac{1}{141}}/\log X$ imaginary quadratic fields $k$ with $|d_k|<X$ and $\rk_3 \Cl(k)\geq 4$ and $\gg X^{\frac{1}{141}}/\log X$ real quadratic fields $k$ with $d_k<X$ and $\rk_3 \Cl(k)\geq 3$. 
\end{corollary}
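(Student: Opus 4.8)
The plan is to obtain this as an immediate application of Corollary~\ref{chyp} to the hyperelliptic curve $C\colon Y^2 = h(t)$ of the preceding theorem, taking $m = 3$. First I would record the numerical invariants of $C$ needed to feed into Corollary~\ref{chyp}. Since $\deg h = 141$ is odd (and $h$ is squarefree, so that $C$ is nonsingular as asserted), the smooth projective model $C$ is hyperelliptic of genus $g$ with $2g+1 = 141$, i.e.\ $g = 70$, and the unique point of $C$ above $t = \infty$ is a $\mathbb{Q}$-rational Weierstrass point. Thus the hypotheses of Corollary~\ref{chyp} hold, and the exponent it produces is $\tfrac{1}{2g+1} = \tfrac{1}{141}$.

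Next I would invoke the preceding theorem, which gives $\rk_3 \Jac(C)(\mathbb{Q})_{\rm{tors}} \geq 4$. Plugging this, $m = 3$, and $g = 70$ into Corollary~\ref{chyp} yields $\gg X^{\frac{1}{141}}/\log X$ imaginary quadratic fields $k$ with $|d_k| < X$ and
\[
\rk_3 \Cl(k) \geq \rk_3 \Jac(C)(\mathbb{Q})_{\rm{tors}} \geq 4,
\]
together with $\gg X^{\frac{1}{141}}/\log X$ real quadratic fields $k$ with $d_k < X$ and
\[
\rk_3 \Cl(k) \geq \rk_3 \Jac(C)(\mathbb{Q})_{\rm{tors}} - 1 \geq 3,
\]
where the drop by $1$ in the real case reflects the rank-one unit group of a real quadratic field. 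This finishes the corollary.

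In this form the corollary is essentially a bookkeeping exercise: all the substance sits in the preceding theorem, namely the verification (via Cantor's algorithm in Magma, reducing mod $p = 7$) that the four divisor classes $D_1, D_2, D_3, D_4$ arising from the four factorizations of $h$ are $\mathbb{Z}/3\mathbb{Z}$-independent in $\Jac(C)(\mathbb{Q})$. The one point I would want to double-check here is purely numerical — that the specialization $\alpha = 0$, $\beta = t$, $\gamma = -t$ of Craig's two-parameter solution to \eqref{Cr4}--\eqref{Cr6} really produces an $h(t)$ of degree exactly $141$ with no repeated roots — since both the value of the exponent $\tfrac{1}{141}$ in the count and the identification $g = 70$ depend on it.
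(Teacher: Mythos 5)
Your proposal is correct and matches the paper's own proof: the corollary is obtained by applying Corollary \ref{chyp} with $m=3$ to the curve $Y^2=h(t)$, using that $\deg h=141$ is odd (so $C$ has a rational Weierstrass point and genus $70$, giving the exponent $\tfrac{1}{141}$) together with the preceding theorem's bound $\rk_3\Jac(C)(\mathbb{Q})_{\rm tors}\geq 4$. Your added remark about verifying $\deg h=141$ and squarefreeness is a reasonable sanity check but introduces no new argument beyond what the paper does.
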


\subsection{$5$-ranks of quadratic fields and a construction of Mestre}
In \cite{Mes4}, Mestre showed that there exist infinitely many imaginary and real quadratic fields $k$ with $\rk_5 \Cl(k)\geq 3$.  Fitting Mestre's work into our framework, we show that a slight variation of Mestre's constructions yields a one-parameter family of hyperelliptic curves $C$ which have a rational Weierstrass point and $\rk_5 \Jac(C)(\mathbb{Q})_{\rm tors}\geq 3$.  Thus, by Corollary \ref{chyp}, we obtain infinitely many imaginary quadratic fields $k$ with $\rk_5 \Cl(k)\geq 3$ and infinitely many real quadratic fields $k$ with $\rk_5 \Cl(k)\geq 2$.  Unfortunately, we are not able to fully recover Mestre's result in the real quadratic case.  

Elliptic curves with a rational point of order $10$ form a one-parameter family, given explicitly by Kubert (after the change of variable $f=\frac{u+1}{2}$ in \cite{Kub}) as $y^2=g_u(x)$, where
\begin{align*}
g_u(x)&=(x^2-u(u^2+u-1))h_u(x),\\
h_u(x)&=8xu^2+(u^2+1)(u^4-2u^3-6u^2+2u+1).
\end{align*}
Alternatively, we can consider this as an elliptic curve over $\mathbb{Q}(u)$ defined by $y^2=g_u(x)$.  This has a point $P$ over $\mathbb{Q}(u)$ of order $10$ , given by
\begin{equation*}
P=\left(\frac{1+3u-u^2-u^3}{2u},\frac{(1-u)(1+u)^3(1+4u-u^2)}{2u}\right).
\end{equation*}
Let
\begin{align*}
u_1&=(t^2+t-1)/(t^2+t+1),\\
u_2&=-(t^2+3t+1)/(t^2+t+1),\\
u_3&=-(t^2-t-1)/(t^2+t+1).
\end{align*}
Then
\begin{equation*}
u_1(u_1^2+u_1-1)=u_2(u_2^2+u_2-1)=u_3(u_3^2+u_3-1).
\end{equation*}
Let $C$ be the nonsingular projective model of the curve (over $\mathbb{Q}(t)$) defined by
\begin{align}
y_1^2&=g_{u_1}(x),\label{C1}\\
y_2^2&=g_{u_2}(x),\label{C2}\\
y_3^2&=g_{u_3}(x).\label{C3}
\end{align}
\begin{theorem}
\label{Mh}
The curve $C$ is hyperelliptic of genus $5$ with a $\mathbb{Q}(t)$-rational Weierstrass point and $\rk_{10} \Jac(C)(\mathbb{Q}(t))_{\rm tors}\geq 3$.
\end{theorem}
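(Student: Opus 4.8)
The plan is to realize $C$ explicitly as a $(\mathbb{Z}/2\mathbb{Z})^3$-cover of $\mathbb{P}^1$, read off its genus and hyperelliptic structure from that cover, and then get the torsion by pulling back the $10$-torsion points of three elliptic quotients. The starting observation is that, because $u_1,u_2,u_3$ were chosen so that $c:=u_1(u_1^2+u_1-1)=u_2(u_2^2+u_2-1)=u_3(u_3^2+u_3-1)$, we have $g_{u_i}(x)=(x^2-c)h_{u_i}(x)$ for each $i$. Thus \eqref{C1}--\eqref{C3} exhibit $C$ as the smooth projective model of the fiber product over $\mathbb{P}^1_x$ of the three curves $y_i^2=(x^2-c)h_{u_i}(x)$, i.e.\ as the cover of $\mathbb{P}^1_x$ obtained by adjoining $\sqrt{(x^2-c)h_{u_1}},\sqrt{(x^2-c)h_{u_2}},\sqrt{(x^2-c)h_{u_3}}$. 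For generic $t$ these three functions, together with all their pairwise and triple products, are nonsquares in $\mathbb{Q}(t)(x)$ (they are polynomials with distinct roots, using that the $u_i$ are distinct, $c\neq0$, and the roots $r_i$ of $h_{u_i}$ are distinct from each other and from $\pm\sqrt c$), so the cover has full degree $8$, with deck group $(\mathbb{Z}/2\mathbb{Z})^3=\langle\sigma_1,\sigma_2,\sigma_3\rangle$ where $\sigma_i$ negates $y_i$. The cover $C\to\mathbb{P}^1_x$ is branched exactly over $x=\pm\sqrt c$, $x=r_1,r_2,r_3$, and $x=\infty$; at $x=r_i$ only the square root of $(x^2-c)h_{u_i}$ ramifies, while at each of $x=\pm\sqrt c$ and $x=\infty$ all three square roots ramify but their ratios do not, so the inertia group has order $2$ at all six branch points and there are four ramified points over each. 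Riemann--Hurwitz then gives $2g(C)-2=8\cdot(-2)+6\cdot4=8$, so $g(C)=5$.

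To see that $C$ is hyperelliptic, I would consider the involution $\sigma=\sigma_1\sigma_2\sigma_3$. Its fixed field is $\mathbb{Q}(t)\big(x,\,y_1/y_2,\,y_1/y_3\big)$, which is the $(\mathbb{Z}/2\mathbb{Z})^2$-cover $C'$ of $\mathbb{P}^1_x$ obtained by adjoining $\sqrt{h_{u_1}h_{u_2}}$ and $\sqrt{h_{u_1}h_{u_3}}$; this cover is branched only over $r_1,r_2,r_3$, with two ramified points above each, so Riemann--Hurwitz gives $2g(C')-2=4\cdot(-2)+3\cdot2=-2$, i.e.\ $g(C')=0$. Moreover $h_{u_i}h_{u_j}$ has leading coefficient $64u_i^2u_j^2=(8u_iu_j)^2$, a square in $\mathbb{Q}(t)$, so $C'\to\mathbb{P}^1_x$ splits completely over $x=\infty$; hence $C'$ has $\mathbb{Q}(t)$-rational points and $C'\cong\mathbb{P}^1_{\mathbb{Q}(t)}$. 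Therefore $C\to C'$ is a degree-$2$ morphism to $\mathbb{P}^1$ and $C$ is hyperelliptic, with $\sigma$ as its hyperelliptic involution. The Weierstrass points of $C$ are the fixed points of $\sigma$, i.e.\ the ramification of $C\to C'$; since $C\to C'$ is obtained by adjoining $\sqrt{(x^2-c)h_{u_1}}$, and this function has a pole of odd order $3$ at each of the four $\mathbb{Q}(t)$-rational points of $C'$ lying over $x=\infty$, the cover $C\to C'$ is ramified there, producing (at least) four $\mathbb{Q}(t)$-rational Weierstrass points on $C$.

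For the torsion, projection $(x,y_1,y_2,y_3)\mapsto(x,y_i)$ gives degree-$4$ morphisms $\phi_i\colon C\to E_i$, where $E_i\colon y^2=g_{u_i}(x)$ is the member $u=u_i\in\mathbb{Q}(t)$ of Kubert's family of elliptic curves with a rational point of order $10$; hence $E_i$ carries the explicit point $P_i\in E_i(\mathbb{Q}(t))$ of exact order $10$ (for generic $t$, where $E_i$ is nonsingular and $u_i\neq0$). Pulling back divisor classes gives $\phi_i^*\colon E_i=\Jac(E_i)\to\Jac(C)$ with $\phi_{i*}\circ\phi_i^*=[\deg\phi_i]=[4]$, so $\phi_i^*$ is injective on prime-to-$2$ torsion; in particular $\phi_i^*(2P_i)\in\Jac(C)(\mathbb{Q}(t))$ has exact order $5$. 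Because $E_1,E_2,E_3$ are pairwise non-isogenous (their $j$-invariants are distinct nonconstant functions of $t$, which one can confirm by specializing $t$ and comparing a few Frobenius traces), we have $\phi_{i*}\circ\phi_j^*=0$ for $i\neq j$; applying $\phi_{i*}$ to a relation $\sum_j a_j\phi_j^*(2P_j)=0$ then yields $8a_iP_i=0$, hence $5\mid a_i$, so $\phi_1^*(2P_1),\phi_2^*(2P_2),\phi_3^*(2P_3)$ are $\mathbb{F}_5$-independent and $\rk_5\Jac(C)(\mathbb{Q}(t))\geq3$. On the other hand, for four rational Weierstrass points $W_1,\dots,W_4$ as above, the classes $[W_1-W_2],[W_1-W_3],[W_1-W_4]$ lie in $\Jac(C)[2](\mathbb{Q}(t))$ and are $\mathbb{F}_2$-independent (a nonzero relation among them would force an accidental linear equivalence among the $W_j$, which is a closed condition on $t$ and fails for a suitable value), so $\rk_2\Jac(C)(\mathbb{Q}(t))\geq3$. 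Since $\gcd(2,5)=1$, combining these gives $\rk_{10}\Jac(C)(\mathbb{Q}(t))_{\mathrm{tors}}\geq3$.

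The Riemann--Hurwitz bookkeeping is routine once the $(\mathbb{Z}/2\mathbb{Z})^3$-picture is in place; the substantive points — and the ones most naturally checked by a short machine computation, in the style of the Magma verifications used elsewhere in this section — are the non-degeneracy statements: that the relevant square roots are genuinely independent, that $E_1,E_2,E_3$ are pairwise non-isogenous, and that the four chosen Weierstrass points satisfy no accidental linear equivalence. I expect the non-isogeny (equivalently, the independence of the three elliptic factors of $\Jac(C)$ on $5$-torsion) to be the trickiest ingredient. An alternative that sidesteps all of this is to compute an explicit affine model $Y^2=F(t,X)$ for $C$ over $\mathbb{Q}(t)$ together with explicit torsion divisors (images of the $10$-torsion of the $E_i$ and differences of rational Weierstrass points), specialize $t$ to a convenient rational value $t_0$, and verify in Magma that these divisors generate $(\mathbb{Z}/10\mathbb{Z})^3$ in $\Jac(C_{t_0})(\mathbb{Q})$; since reduction at a place of good reduction is injective on torsion, this forces the same independence already over $\mathbb{Q}(t)$.
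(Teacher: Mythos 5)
Your proposal follows, in all essentials, the same route as the paper: the genus and the hyperelliptic structure come from the $(\mathbb{Z}/2\mathbb{Z})^3$-cover of $\mathbb{P}^1_x$ (the paper instead exhibits an explicit parametrization of $C'$ and an explicit hyperelliptic model, but your Riemann--Hurwitz count and the observation that $h_{u_i}h_{u_j}$ has square leading coefficient, so that $C'\to\mathbb{P}^1_x$ splits over $x=\infty$, reach the same conclusions); the four rational points of $C$ over $x=\infty$ give at least four $\mathbb{Q}(t)$-rational Weierstrass points and hence $2$-torsion of rank $3$; and the $5$-torsion of rank $3$ comes from pulling back torsion from the three elliptic quotients $E_i$ and using $\phi_{i*}\phi_i^*=[4]$ together with $(4,5)=1$, exactly as in the paper.

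The one place where you genuinely diverge is also the one soft spot: the vanishing of the cross terms $\phi_{i*}\phi_j^*$ for $i\neq j$. You deduce it from pairwise non-isogeny of $E_1,E_2,E_3$; but distinct nonconstant $j$-invariants do not by themselves preclude an isogeny, and the Frobenius-trace check you mention is left undone, so as written this step is not established. It is also not needed. Let $D_j$ be a degree-zero divisor on $E_j$ whose class has order $5$, let $H_i$ be the deck group of $\phi_i$, let $\iota$ be the elliptic involution on $E_j$ and $\psi_j:E_j\to\mathbb{P}^1_x$ the quotient by $\iota$. For $j\neq i$ one has
\begin{equation*}
\phi_i^*\phi_{i*}\phi_j^*D_j=\sum_{h\in H_i}h^*\phi_j^*D_j=2\,\phi_j^*\left(D_j+\iota^*D_j\right)=2\,\phi_j^*\psi_j^*\psi_{j*}D_j\sim 0,
\end{equation*}
since $\psi_{j*}D_j$ is a degree-zero divisor on $\mathbb{P}^1$. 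Applying $\phi_{i*}$ gives $4\,\phi_{i*}\phi_j^*D_j\sim 0$, and since this class is also $5$-torsion it is trivial. This is the implicit content of the paper's one-line computation $\phi_{i*}(\sum_j a_j\tilde D_j)\sim 4a_iD_i$, and it makes your argument unconditional, so neither the non-isogeny claim nor your Magma fallback is required. Similarly, the $\mathbb{F}_2$-independence of $[W_1-W_2],[W_1-W_3],[W_1-W_4]$ needs no specialization check: a relation would force $W_j+W_k\sim 2W_1$ or $W_2+W_3+W_4\sim 3W_1$, which is impossible for distinct Weierstrass points on a hyperelliptic curve of genus $\geq 3$, since every effective divisor in $|2W_1|$ is a fiber of the hyperelliptic map and every member of $|3W_1|$ contains $W_1$. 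With these two repairs your proof is complete and matches the paper's.
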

\begin{proof}
Consider the three elliptic curves over $\mathbb{Q}(t)$ given (in affine equations) by
\begin{align*}
E_1: y^2&=g_{u_1}(x),\\
E_2: y^2&=g_{u_2}(x),\\
E_3: y^2&=g_{u_3}(x).
\end{align*}
Let $\phi_i$, $i=1,2,3$, be the natural projection map from $C$ onto $E_i$ induced by the map $(x,y_1,y_2,y_3)\mapsto (x,y_i)$.  Let $D_1$, $D_2$, and $D_3$ be divisors generating the $\mathbb{Q}(t)$-rational $5$-torsion on $E_1$, $E_2$, and $E_3$, respectively.  Let $\tilde{D}_i=\phi_i^*D_i$, $i=1,2,3$.  Then $\phi_{i*}(a_1\tilde{D_1}+a_2\tilde{D_2}+a_3\tilde{D_3})\sim (\deg \phi_i)a_iD_i=4a_iD_i$ on $E_i$, $i=1,2,3$, where $\sim$ denotes linear equivalence.  Since $(4,5)=1$, it follows immediately that $\tilde{D_1}$, $\tilde{D_2}$, and $\tilde{D_3}$ generate a $5$-torsion subgroup of rank $3$ in $\Jac(C)(\mathbb{Q}(t))$.

Consider the curve $C'$ over $\mathbb{Q}(t)$ that is the nonsingular projective model of the curve defined by $h_{u_1}(x)=v^2h_{u_2}(x)=w^2h_{u_3}(x)$.  Then we have a degree two map $\psi:C\to C'$ induced by the map 
\begin{equation*}
(x,y_1,y_2,y_3)\mapsto \left(x,v=\frac{y_1}{y_2},w=\frac{y_1}{y_3}\right).
\end{equation*}
We claim that $C'\cong \mathbb{P}^1_{\mathbb{Q}(t)}$. In fact, an explicit parametrization (in $z$) of the curve $h_{u_1}(x)=v^2h_{u_2}(x)=w^2h_{u_3}(x)$ is given by
\begin{align}
x&=\frac{(1 + u_1^2)(1 + 2 u_1 - 6 u_1^2 - 2 u_1^3 + 
                u_1^4) - (1 + u_2^2) (1 + 2 u_2 - 6 u_2^2 - 
                2 u_2^3 + u_2^4) v^2}{ 8 (u_2^2 v^2- u_1^2)
         }\label{zp1}\\
&=\frac{(1 + u_1^2)(1 + 2 u_1 - 6 u_1^2 - 2 u_1^3 + 
                u_1^4) - (1 + u_3^2) (1 + 2 u_3 - 6 u_3^2 - 
                2 u_3^3 + u_3^4) w^2}{8 (u_3^2 w^2-u_1^2) 
          },\label{zp2}\\
v&=\frac{-1 - t + 3 t^2 + 2 t^3 - 2 z - 4 t z + 4 t^2 z + 2 t^3 
z + z^2 - 2 t z^2 + t^3 z^2}{1 + 5 t + 7 t^2 + 2 t^3 - 2 z - 
2 t z + 6 t^2 z + 4 t^3 z - z^2 - 2 t z^2 + 2 t^2 z^2 + t^3 
z^2},\\
w&=-\frac{-1 - t + 3 t^2 + 2 t^3 - 2 z - 4 t z + 4 t^2 z + 
          2 t^3 z + z^2 - 2 t z^2 + 
          t^3 z^2}{(-1 - t + t^2)(-1 - 2 t - z^2 + 
              t z^2)}.\label{zp3}
\end{align}
An explicit hyperelliptic equation for $C$ can be obtained by substituting $x=x(z)$ from \eqref{zp1} into any one of \eqref{C1}, \eqref{C2}, or \eqref{C3}.  Note that the map $\psi$ is ramified above the four $\mathbb{Q}(t)$-rational points on $C'$ given by $(x,v,w)=(\infty, \pm \frac{u_1}{u_2},\pm\frac{u_1}{u_3})$.  It follows that $C$ has at least four rational Weierstrass points.  Their pairwise differences generate a $2$-torsion subgroup of rank $3$ in $\Jac(C)(\mathbb{Q}(t))$.  Finally, a calculation of the explicit hyperelliptic equation for $C$ (or a careful Riemann-Hurwitz calculation) shows that $C$ has genus $5$.
\end{proof}
For infinitely many specializations $t\in \mathbb{Z}$, we obtain a hyperelliptic curve over $\mathbb{Q}$ with the same properties as in Theorem \ref{Mh}. So by Corollary \ref{chyp}, we obtain the following.
\begin{corollary}
\label{LMes}
There are $\gg X^{\frac{1}{11}}/\log X$ imaginary quadratic fields $k$ with $|d_k|<X$ and $\rk_{10} \Cl(k)\geq 3$ and $\gg X^{\frac{1}{11}}/\log X$ real quadratic fields $k$ with $d_k<X$ and $\rk_{10} \Cl(k)\geq 2$.
\end{corollary}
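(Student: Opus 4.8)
The plan is to specialize the one-parameter family $C/\mathbb{Q}(t)$ of Theorem \ref{Mh} to a single curve over $\mathbb{Q}$ carrying the same features, and then quote Corollary \ref{chyp}. First I would spread out the generic fiber: the defining equations of $C$ over $\mathbb{Q}(t)$, together with its hyperelliptic map and its explicit Weierstrass point, are defined over a localization $\mathbb{Q}[t]_h$ for some nonzero $h\in\mathbb{Q}[t]$, so there is a nonempty open $U\subset\mathbb{A}^1_{\mathbb{Q}}$ and a smooth projective morphism $\mathcal{C}\to U$ with geometrically integral one-dimensional fibers and generic fiber $C$. Arithmetic genus is locally constant in this flat proper family and equals $5$ on the generic fiber, so every fiber $C_{t_0}$, $t_0\in U(\mathbb{Q})$, is a nonsingular hyperelliptic curve over $\mathbb{Q}$ of genus $5$; shrinking $U$ further, the $\mathbb{Q}(t)$-rational Weierstrass point of Theorem \ref{Mh} extends to a section of $\mathcal{C}\to U$ and hence specializes to a $\mathbb{Q}$-rational Weierstrass point of each $C_{t_0}$.

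Next I would carry the torsion along. Let $\mathcal{J}\to U$ be the relative Jacobian; after shrinking $U$ it is an abelian scheme, and for each $N$ its $N$-torsion $\mathcal{J}[N]\to U$ is finite étale, since we are in characteristic zero. The subgroup $(\mathbb{Z}/10\mathbb{Z})^3\subset\Jac(C)(\mathbb{Q}(t))_{\rm tors}$ furnished by Theorem \ref{Mh} is a finite set of $\mathbb{Q}(t)$-points of $\mathcal{J}[10]$ which, after one more shrinking of $U$, extend to sections over $U$. Because $U$ is connected and $\mathcal{J}[10]\to U$ is étale, two such sections agreeing at a single point $t_0\in U(\mathbb{Q})$ agree identically; hence the specialization homomorphism $\mathcal{J}[10](U)\to\mathcal{J}_{t_0}[10](\mathbb{Q})$ is injective for every $t_0\in U(\mathbb{Q})$, so that $\rk_{10}\Jac(C_{t_0})(\mathbb{Q})_{\rm tors}\geq 3$.

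To conclude, fix any integer $t_0\in U(\mathbb{Z})$ (the complement of $U(\mathbb{Z})$ in $\mathbb{Z}$ is finite, so such $t_0$ exist, indeed infinitely many). Then $C_{t_0}$ is a nonsingular hyperelliptic curve over $\mathbb{Q}$ of genus $g=5$ with a rational Weierstrass point and $\rk_{10}\Jac(C_{t_0})(\mathbb{Q})_{\rm tors}\geq 3$. Applying Corollary \ref{chyp} to $C_{t_0}$ with $m=10$, and noting $2g+1=11$, we obtain $\gg X^{1/11}/\log X$ imaginary quadratic fields $k$ with $|d_k|<X$ and $\rk_{10}\Cl(k)\geq\rk_{10}\Jac(C_{t_0})(\mathbb{Q})_{\rm tors}\geq 3$, and $\gg X^{1/11}/\log X$ real quadratic fields $k$ with $d_k<X$ and $\rk_{10}\Cl(k)\geq\rk_{10}\Jac(C_{t_0})(\mathbb{Q})_{\rm tors}-1\geq 2$, which is the assertion of the corollary.

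The only delicate point is the specialization step of the middle paragraph: one must be certain that the finitely many bad $t_0$ — those at which the curve degenerates or its genus drops, at which the coordinates of the Weierstrass point or of one of the three torsion divisors develop a pole, or at which $\mathcal{J}$ has bad reduction — are genuinely finite in number, and that on the good locus the rank of the $10$-torsion does not fall below $3$. All of this is standard (openness of the smooth and good-reduction loci, étaleness of prime-to-characteristic torsion in characteristic zero, and rigidity of sections of an étale scheme over a connected base), but it is the one place requiring attention; with a single good $t_0$ in hand, Corollary \ref{chyp} supplies everything else.
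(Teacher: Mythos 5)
Your proposal is correct and follows essentially the same route as the paper: the paper's own proof simply asserts that for infinitely many integer specializations $t$ one obtains a hyperelliptic curve over $\mathbb{Q}$ retaining the properties of Theorem \ref{Mh} (genus $5$, rational Weierstrass point, $\rk_{10}\Jac(C)(\mathbb{Q})_{\rm tors}\geq 3$) and then invokes Corollary \ref{chyp}. Your spreading-out and \'etale-rigidity argument just makes explicit the standard specialization step that the paper leaves implicit, and the numerology ($g=5$, $2g+1=11$, $m=10$) matches.
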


\subsection{Higher degree fields}
A natural family of curves which, relatively speaking, have a lot of $m$-torsion in their Picard group are superelliptic curves $C$ of the form $y^m=a_0\prod_{i=1}^r(x-a_i)$, $a_0,\ldots, a_r\in \mathbb{Z}$.  We already took advantage of such curves in Section \ref{Rosen}, using the projection map $\phi:C\to \mathbb{A}^1$, $(x,y)\mapsto x$, in our proof of Theorem \ref{rn}.  These curves, implicitly, also form the basis for the work of Azuhata, Ichimura, and Nakano on ideal class groups in higher degree number fields.  In their work they use, essentially, the other simple projection map of these curves, $(x,y)\mapsto y$.  Applying a version of the technique of the present paper in this context, in \cite{Lev2} a quantitative version of a case of Azuhata and Ichimura's theorem was derived.
\begin{theorem}
\label{thLev1}
Let $m,n>1$ be positive integers.  There are $\gg X^{\frac{1}{m(n-1)}}/\log X$ number fields $k$ of degree $n$ with $|d_k|<X$ and
\begin{equation*}
\rk_m \Cl(k)\geq \left\lfloor \frac{n}{2}\right\rfloor.
\end{equation*}
\end{theorem}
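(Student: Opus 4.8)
The plan is to apply Theorem \ref{thmain} to a superelliptic curve, but via the projection onto the $y$-coordinate rather than the $x$-coordinate used in Corollary \ref{super} and Theorem \ref{rn}. First I would fix distinct integers $a_1,\dots,a_n$, let $S_{\mathrm{fin}}$ be the (finite) set of primes dividing $\prod_{j<l}(a_j-a_l)$, set $M=\prod_{p\in S_{\mathrm{fin}}}p$ and $a_0=-M$, and let $V$ be the smooth projective model of the geometrically integral curve
\[
y^m=a_0\prod_{j=1}^{n}(x-a_j),
\]
with $\phi:V\to\mathbb A^1$ the rational map induced by $(x,y)\mapsto y$, so $\deg\phi=n$. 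Put $f_j=x-a_j$ for $j=1,\dots,n-1$, so $r=n-1$. The hypothesis \eqref{cond1}, that $[\Qbar(V)(\sqrt[m]{x-a_1},\dots,\sqrt[m]{x-a_{n-1}}):\Qbar(V)]=m^{n-1}$, is verified by Kummer theory exactly as in the proof of Theorem \ref{rn}: a relation $\prod_{j<n}(x-a_j)^{i_j}=g^{m}$ in $\Qbar(V)^*$ with $0\le i_j<m$ becomes a relation in $\Qbar(x)^*$ modulo $m$-th powers and $y^m$, and comparing orders at the points $x=a_j$ forces all $i_j=0$. For $i\in\mathbb Z$ with $(i,M)=1$ write $P_i\in\phi^{-1}(i)$, so $\mathbb Q(P_i)=\mathbb Q(\alpha_i)$ with $a_0\prod_j(\alpha_i-a_j)=i^m$; taking $S=\{\infty\}\cup S_{\mathrm{fin}}$, the identity $\prod_j(\alpha_i-a_j)\,\co_{\mathbb Q(P_i),S}=(i)^m$ together with the fact that the ideals $(\alpha_i-a_j)$ are pairwise coprime outside $S$ (a common prime of $\alpha_i-a_j$ and $\alpha_i-a_l$ divides $a_l-a_j$) shows each $(\alpha_i-a_j)\,\co_{\mathbb Q(P_i),S}$ is an $m$-th power, so \eqref{cond2} holds. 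Theorem \ref{thmain}, applied with ${\bf I}=\{i\in\mathbb Z\cap\phi(V):(i,M)=1\}$, then gives for all but $O(\sqrt N)$ values $i\in{\bf I}(N)$ that $[\mathbb Q(P_i):\mathbb Q]=n$ and
\[
\rk_m\Cl(\mathbb Q(P_i))\ \ge\ (n-1)+\#S_{\mathrm{fin}}-\rk\co_{\mathbb Q(P_i),S}^{*}.
\]

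Next I would make the right side explicit. Writing $\omega_i$ for the number of places of $\mathbb Q(P_i)$ above $S_{\mathrm{fin}}$ and $(r_1,r_2)$ for the signature of $\mathbb Q(P_i)$, Dirichlet's unit theorem gives $\rk\co_{\mathbb Q(P_i),S}^{*}=r_1+r_2-1+\omega_i$, so the bound becomes $\rk_m\Cl(\mathbb Q(P_i))\ge r_2-(\omega_i-\#S_{\mathrm{fin}})$. Thus I need $\omega_i=\#S_{\mathrm{fin}}$ (exactly one place above each $p\in S_{\mathrm{fin}}$) and $r_2=\lfloor n/2\rfloor$. For the first: since $(i,M)=1$ and $p\mid a_0$ for every $p\in S_{\mathrm{fin}}$, the polynomial $g_i(x)=a_0\prod_j(x-a_j)-i^m$ has all coefficients of $x,x^2,\dots,x^n$ divisible by $p$, while its constant term is $\equiv -i^m\not\equiv 0\pmod p$; hence the $p$-adic Newton polygon of $g_i$ is the single segment from $(0,0)$ to $(n,1)$, of slope $1/n$ in lowest terms, so $g_i$ is irreducible over $\mathbb Q_p$ and $p$ is totally ramified in $\mathbb Q(P_i)$. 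For the second: for $i$ large the real roots of $g_i$ are the real solutions of $\prod_j(x-a_j)=-i^m/M$, a very negative number, and a monic real polynomial of degree $n$ with $n$ distinct real roots takes every sufficiently negative value zero times when $n$ is even and exactly once when $n$ is odd; thus $r_1=n-2\lfloor n/2\rfloor$ and $r_2=\lfloor n/2\rfloor$. Combining, for all but $O(\sqrt N)$ of the sufficiently large $i\in{\bf I}(N)$,
\[
\rk_m\Cl(\mathbb Q(P_i))\ \ge\ \lfloor n/2\rfloor .
\]

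It remains to count. A routine estimate shows $|d_{\mathbb Q(P_i)}|=O\!\left(i^{m(n-1)}\right)$: the roots of $g_i$ have absolute value $O(i^{m/n})$, so $\Disc(g_i)=O(i^{m(n-1)})$ and $|d_{\mathbb Q(P_i)}|$ is bounded by the discriminant of the associated monic integral polynomial, which is also $O(i^{m(n-1)})$. Restricting $i$ to a single residue class $i\equiv i_0\pmod M$ with $(i_0,M)=1$ and precomposing $\phi$ with the automorphism $t\mapsto(t-i_0)/M$ of $\mathbb A^1$, Theorem \ref{Zan} produces $\gg N/\log N$ isomorphism classes of number fields among $\mathbb Q(P_{i_0+M}),\dots,\mathbb Q(P_{i_0+MN})$; removing the $O(\sqrt N)=o(N/\log N)$ exceptional values leaves $\gg N/\log N$ distinct number fields of degree $n$, each with $\rk_m\Cl\ge\lfloor n/2\rfloor$ and discriminant $O(N^{m(n-1)})$. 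Taking $N$ a small constant times $X^{1/(m(n-1))}$ gives $\gg X^{1/(m(n-1))}/\log X$ such fields with $|d_k|<X$, which is the theorem.

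The step I expect to be the main obstacle is the arithmetic engineering of the second paragraph. Condition \eqref{cond2} forces $S$ to contain every prime dividing some $a_j-a_l$, so for $n\ge 3$ one cannot take $S=\{\infty\}$; to keep $\#S_{\mathrm{fin}}-\rk\co_{\mathbb Q(P_i),S}^{*}$ from degenerating one must guarantee a single prime of $\mathbb Q(P_i)$ over each $p\in S_{\mathrm{fin}}$, and this must be reconciled with making the field as imaginary as possible. The device that makes everything fit is the choice $a_0=-M=-\prod_{p\in S_{\mathrm{fin}}}p$ together with the congruence $(i,M)=1$: this simultaneously leaves \eqref{cond2} intact, turns the Newton polygon at each $p\in S_{\mathrm{fin}}$ into a single segment of slope $1/n$ (hence forces total ramification), and, since then $i^m/a_0$ is large and negative, forces the signature with $r_2=\lfloor n/2\rfloor$. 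A secondary technical point is that Theorem \ref{Zan} is stated for all integer specializations, so one must pass to an arithmetic progression, handled by the reparametrisation above.
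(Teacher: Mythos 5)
Your proof is correct and follows essentially the approach the paper attributes to \cite{Lev2} for this theorem (and uses for Theorem \ref{QN}): apply Theorem \ref{thmain} to the superelliptic curve $y^m=a_0\prod_{j=1}^n(x-a_j)$ via the projection to the $y$-line, with $f_j=x-a_j$ giving $r=n-1$, then combine with Theorem \ref{Zan} and the discriminant estimate $|d_{\mathbb{Q}(P_i)}|=O(i^{m(n-1)})$. Your normalization $a_0=-M$ with $(i,M)=1$, forcing total ramification at every prime of $S_{\rm{fin}}$ (via the Newton polygon) and the signature $r_2=\lfloor n/2\rfloor$, is the same device the paper employs in Corollary \ref{super}, so the two arguments match in substance.
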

Improving on this result, we derive a quantitative version of Nakano's theorem.
\begin{theorem}
\label{QN}
Let $m,n>1$ be positive integers.  Let $r_1$ and $r_2$ be nonnegative integers such that $r_1+2r_2=n$.  Let
\begin{equation*}
f(m,n,r_1,X)=
\begin{cases}
X^{\frac{1}{m(r_1-2)(2n-r_1+1)}}/\log X \quad &\mbox{ if $n$ is odd and $3<r_1<n$},\\
X^{\frac{1}{m(r_1-3)(2n-r_1+2)}}/\log X \quad &\mbox{ if $n$ is even and $2<r_1<n$},\\
X^{\frac{1}{2m(n-1)}}/\log X \quad &\mbox{ if $r_1=n$},\\
X^{\frac{1}{mn(n-1)}}/\log X \quad &\mbox{ if $m$ and $n$ are both even and $r_1=0$},\\
X^{\frac{1}{mn}}/\log X \quad &\mbox{ if $n$ is odd and $r_1=1, 3$ or}\\
&\mbox{ if $n$ is even and $r_1=2$ or}\\
&\mbox{ if $m$ is odd, $n$ is even, and $r_1=0$.}
\end{cases}
\end{equation*}
Then there are $\gg f(m,n,r_1,X)$ number fields $k$ of degree $n$ with $r_1$ real places and $r_2$ complex places, $|d_k|<X$, and
\begin{equation*}
\rk_m \Cl(k)\geq r_2+1.
\end{equation*}
\end{theorem}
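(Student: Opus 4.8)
The plan is to run the machinery of Corollary \ref{thmain2} against a family of $n$-gonal curves whose Jacobians carry the maximal possible amount of rational $m$-torsion, thereby improving the quantitative Azuhata--Ichimura bound of Theorem \ref{thLev1} by the single unit that separates Azuhata--Ichimura from Nakano. Suppose $C$ is a nonsingular projective curve over $\mathbb{Q}$ with a rational point (so that the invariant $d$ of Corollary \ref{thmain2} equals $1$ and $m'=m$), $\phi\colon C\to\mathbb{P}^1$ a morphism of degree $n$, and $T$ the union of $\{p:p\mid m\}$, the primes of bad reduction of $\Jac(C)$, and $\infty$. If in addition, for $i$ ranging over a fixed arithmetic progression, (a) the degree-$n$ polynomial defining $\mathbb{Q}(P_i)$ has exactly $r_1$ real roots, and (b) every prime of $T_{\rm fin}$ is totally ramified in $\mathbb{Q}(P_i)$, then $\rk\co_{\mathbb{Q}(P_i),T}^*=(r_1+r_2-1)+\#T_{\rm fin}$, and Corollary \ref{thmain2} gives, for all but $O(\sqrt N)$ values $i\le N$,
\[
[\mathbb{Q}(P_i):\mathbb{Q}]=n,\qquad \rk_m\Cl(\mathbb{Q}(P_i))\ \ge\ \rk_m\Jac(C)(\mathbb{Q})_{\rm tors}-(r_1+r_2-1).
\]
Since $r_1+2r_2=n$, the right-hand side is $\ge r_2+1$ precisely when $\rk_m\Jac(C)(\mathbb{Q})_{\rm tors}\ge n$. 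So the whole problem reduces to producing, for each admissible $(r_1,r_2)$, a curve $C$, a rational point, and a degree-$n$ map $\phi$ with $\rk_m\Jac(C)(\mathbb{Q})_{\rm tors}\ge n$ for which (a) and (b) can be arranged, and then counting.

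For the curve and its torsion, the natural starting point (as in \cite{Lev2} and the ``Higher degree fields'' discussion above) is a superelliptic curve $y^m=f(x)$ with $\deg f=n$, whose $x$-projection is $n$-gonal and whose $n$ branch points supply $n-1$ independent $\mathbb{Q}$-rational $m$-torsion classes; independence is checked by the Kummer argument of Lemma \ref{LKum}, exactly as in the proofs of Theorem \ref{rn} and Corollary \ref{super}. This is the Azuhata--Ichimura rank $r_2$; to reach Nakano's $r_2+1$ one augments the configuration by a single further independent rational $m$-torsion class --- for $n=2$ this is the passage from Weierstrass-point $2$-torsion to the rank-$m$-$2$ hyperelliptic curves of Theorem \ref{ri}, and for general $n$ one uses the analogous $n$-gonal models (enlarging $f$, exploiting the divisor at infinity, or taking a suitable fiber product of covers), with independence again certified by Lemma \ref{LKum} or by a direct Magma-style computation as in Section \ref{apps}. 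The case distinction in the statement of $f(m,n,r_1,X)$ reflects exactly that the minimal-genus $n$-gonal curve which simultaneously has rational $m$-torsion rank $n$ and admits a degree-$n$ map realizing signature $(r_1,r_2)$ depends on the parities of $m$ and $n$ and on the size of $r_1$; each regime is handled by its own model, and the genus of that model is what fixes the discriminant exponent below.

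Conditions (a) and (b) are then imposed on $\phi$ and the progression, just as in the proofs of Corollary \ref{super} and Theorem \ref{LCra1}. For (b): rescale the model so that a chosen rational prime divides the pertinent leading coefficient to exactly the right order, put $M=\prod_{p\in T_{\rm fin}}p$, let $\phi$ differ from the coordinate projection by the shift $x\mapsto x-\tfrac1M$ (or by a linear-fractional change as in Theorem \ref{LCra1}), and restrict $i$ to a residue class modulo $M$; then, exactly as in Corollary \ref{super}, each prime of $T_{\rm fin}$ becomes totally ramified in $\mathbb{Q}(P_i)$, so its contributions to $\#T_{\rm fin}$ and to $\rk\co_{\mathbb{Q}(P_i),T}^*$ cancel. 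For (a): over the real interval in which $i$ varies, the number of real roots of the defining polynomial equals the number of real sheets of $\phi$ over that interval, which is governed by the real configuration of the branch points; one chooses that configuration --- this is the geometric content of Nakano's construction --- so that the count is exactly $r_1$ for all sufficiently large $i$ in the progression, verified by a sign/intermediate-value analysis. By Remark \ref{Hrem} the restriction to an arithmetic progression is compatible with Hilbert irreducibility.

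Finally the count: by Theorem \ref{Zan} there are $\gg N/\log N$ distinct fields among $\mathbb{Q}(P_1),\dots,\mathbb{Q}(P_N)$, all of degree $n$ with signature $(r_1,r_2)$ and $\rk_m\Cl\ge r_2+1$ once $i$ is large and in the progression, while a direct estimate $|d_{\mathbb{Q}(P_i)}|=O(i^{D})$ --- the discriminant of the degree-$n$ defining polynomial as a function of $i$, computed exactly as in Corollary \ref{super} --- converts this into $\gg X^{1/D}/\log X$ fields with $|d_k|<X$, with $D$ equal in each regime to the denominator of the exponent in $f(m,n,r_1,X)$. The main obstacle is this last simultaneous optimization: keeping the curve $n$-gonal, of rational $m$-torsion rank $n$, and of signature $(r_1,r_2)$, while making its genus (hence $D$) as small as the theorem demands; this is the technical heart inherited from Nakano, and it is what forces the piecewise definition of $f(m,n,r_1,X)$. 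A secondary subtlety is making the real-root count in (a) uniform in $i$.
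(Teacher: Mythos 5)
There is a genuine gap, and it sits exactly where you park the difficulty: your argument reduces Theorem \ref{QN} to producing, for every $m,n$ and every signature $(r_1,r_2)$, an $n$-gonal curve $C$ over $\mathbb{Q}$ with $\rk_m \Jac(C)(\mathbb{Q})_{\rm tors}\geq n$, together with a degree-$n$ map realizing the signature and a genus small enough to give the stated exponents. That reduction is never carried out, and it cannot simply be waved through: for general $m$ and $n$ the existence of $n$-gonal curves with rational $m$-torsion of rank as large as $n$ is not known --- it is essentially the content of the open Question \ref{Q1} --- and the superelliptic models you invoke ($y^m=f(x)$ with $\deg f=n$) only yield rank $n-1$ from the branch points, which after your bookkeeping gives $r_2$, i.e.\ Azuhata--Ichimura, not $r_2+1$. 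The single ``further independent torsion class'' you propose to add (``enlarging $f$, exploiting the divisor at infinity, or taking a suitable fiber product''), with signature control and with the genus bounded tightly enough to match each denominator in $f(m,n,r_1,X)$, is precisely the hard part, and nothing in the proposal supplies it. The same applies to condition (a): asserting that the branch configuration ``can be chosen'' so that exactly $r_1$ roots are real, uniformly for large $i$ in a progression, is a claim that needs an explicit family and a root analysis.

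The paper's proof avoids your reduction entirely, and this is worth internalizing: it applies Theorem \ref{thmain} (not Corollary \ref{thmain2}) to explicit affine curves inside the variety $xt_0^m=-\prod_{j=1}^n(x-t_j^m)$, obtained by substituting polynomials $t_0(y),\ldots,t_n(y)$ as in \eqref{teq1}--\eqref{teq3}, with the $n$ functions $f_j=x-t_j^m$. Crucially, condition \eqref{cond2} is arranged \emph{arithmetically}, with $S=\{\infty\}$: the coprimality conditions of Lemma \ref{lrp}, made uniform in $y$ by the parameter choices of Lemma \ref{nlem}, force each $f_j(P_i)$ to generate an $m$-th power ideal in the full ring of integers. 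The divisors of the $f_j$ on a smooth model need not be $m$-divisible at all, so no rank-$n$ rational torsion in any Jacobian is required; independence \eqref{cond1}, giving degree $m^n$, is checked by reduction modulo the auxiliary prime $q$, not via Lemma \ref{LKum}. This yields $\rk_m\Cl(\mathbb{Q}(P_i))\geq n-\rk\co_{\mathbb{Q}(P_i)}^*$ directly, after which the signature and the discriminant bound $O(i^{(r_1-2)m(2n-r_1+1)})$ (and its analogues in the other regimes, which use different substitutions $t_j(y)$) come from a careful real-root and discriminant analysis of the degree-$n$ polynomial $g_y(x)$, and the count follows from Theorem \ref{Zan} as in your final step. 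So while your unit/ramification bookkeeping and the use of Theorem \ref{Zan} are sound, the core construction you defer to --- an $n$-gonal curve with $m$-torsion rank $n$ --- is both unproven and unnecessary; the flexibility of Theorem \ref{thmain} over the torsion-based Corollaries \ref{cormain} and \ref{thmain2} is exactly what makes the theorem provable.
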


\begin{proof}
Let $V\subset\mathbb{A}^{n+2}$ be the variety defined by $xt_0^m=-\prod_{j=1}^n (x-t_j^m)$.  Let $P=(x,t_0,\ldots, t_n)\in V(\Qbar)$, with $t_0,\ldots, t_n\in \mathbb{Z}$.  Let $k=\mathbb{Q}(P)$.
\begin{lemma}
\label{lrp}
Suppose that 
\begin{align}
&\left(t_0,\prod_{1\leq  i<j\leq n} t_i^m-t_j^m\right)=1,\label{rp1}\\
&\left(t_{i},t_0^m+(-1)^{n-1}\prod_{\substack{j=1\\j\neq i}}^nt_j^m\right)=1, \quad i=1,\ldots, n.\label{rp2}
\end{align}
Then for $j=1,\ldots, n$, $(x-t_j^m)=\mathfrak{a}_j^m$ for some ideal $\mathfrak{a}_j$ of $\co_k$.
\end{lemma}
\begin{proof}
Let $\alpha_j=x-t_j^m$, $j=1,\ldots, n$.  Let $i\in \{1,\ldots,n\}$.  Then from the definitions, $\alpha_{i}$ satisfies the equation
\begin{equation}
\label{peq}
\alpha_{i}\left(t_0^m+\prod_{\substack{j=1\\j\neq i}}^n (\alpha_{i}+t_{i}^m-t_j^m)\right)=-(t_0t_{i})^m.
\end{equation}
Note that by \eqref{rp1} and \eqref{rp2},
\begin{equation*}
\left((t_0t_i)^m,t_0^m+\prod_{\substack{j=1\\j\neq i}}^n (t_{i}^m-t_j^m)\right)=1.
\end{equation*}
Since $\alpha_i$ divides $(t_0t_i)^m$,
\begin{equation*}
\left(\alpha_{i},t_0^m+\prod_{\substack{j=1\\j\neq i}}^n (\alpha_{i}+t_{i}^m-t_j^m)\right)=\left(\alpha_{i},t_0^m+\prod_{\substack{j=1\\j\neq i}}^n (t_{i}^m-t_j^m)\right)=\co_k.
\end{equation*}
Using \eqref{peq}, we then obtain the lemma.
\end{proof}

We prove the theorem by looking at certain curves on $V$.  For instance, suppose that $n$ is odd and $3<r_1<n$ (for $r_1=3$, the proof also works, but in this case there is a better construction).  We consider an affine plane curve $C$ defined by (in $x$ and $y$) $xt_0^m=-\prod_{j=1}^n (x-t_j^m)$, where
\begin{align}
\label{teq1}
t_0&=(BMy+1)^{r_1-2},\\
\label{teq2}
t_i&=iBqy+a_i, &&i=1,\ldots, r_1-2,\\
\label{teq3}
t_i&=p^{i-r_1+2},&&i=r_1-1,\ldots, n,
\end{align}
and $B,M,p,q, a_1,\ldots, a_{r_1-2}\in \mathbb{Z}$.
\begin{lemma}
\label{nlem}
There exist choices of primes $p, q$, and integers $B,M,a_1,\ldots, a_{r_1-2}\in \mathbb{Z}$ such that for all $y\in \mathbb{Z}$, \eqref{rp1} and \eqref{rp2} hold for $t_i$ as in \eqref{teq1}--\eqref{teq3},
\begin{equation*}
p^m,p^{2m},\ldots,p^{m(n-r_1+2)},a_1^m,\ldots, a_{r_1-2}^m
\end{equation*}
are distinct and nonzero modulo $q$, $M$ is arbitrarily large compared to $p,q,a_1,\ldots, a_{r_1-2}$, and $BM\not\equiv 0 \pmod q$.
\end{lemma}
\begin{proof}
Let $b_i=a_iM-iq$, $i=1,\ldots, r_1-2$, and $b_i=Mp^{i-r_1+2}$, $i=r_1-1,\ldots, n$.  Let $B_0=\prod_{1\leq  i<j\leq n} b_i^m-b_j^m$.  For $i=1,\ldots, r_1-2$, let
\begin{equation*}
B_i=(iq-a_iM)^{m(r_1-2)}+(iq)^{m(r_1-2)}p^c\prod_{\substack{j=1\\ j\neq i}}^{r_1-2}(ia_j-ja_i)^m.
\end{equation*}
We choose $B$ such that $B=\prod_{i=0}^{r_1-2}B_i$.  We claim that for this choice of $B$, Eqs. \eqref{rp1} and \eqref{rp2} can be reduced to the equations
\begin{equation}
\label{rp3}
\left(a_i, (iq)\left(1+p^c\prod_{\substack{j=1\\ j\neq i}}^{r_1-2}(ia_j)^m\right)\right)=1, \quad i=1,\ldots, r_1-2,
\end{equation}
where $c={n-r_1+3 \choose 2}$.  We now show this.

Suppose that \eqref{rp3} holds.  The constant $B_0$ was chosen such that $B_0$ is in the ideal $\left(BMy+1,\prod_{1\leq  i<j\leq n} t_i^m-t_j^m\right)$.  Since $t_0=(BMy+1)^{r_1-2}\equiv 1 \pmod {B_0}$, \eqref{rp1} holds.  Similarly, for $i=1,\ldots, r_1-2$, $B_i$ is in the ideal $\left(t_{i},t_0^m+\prod_{j\neq 0,i}t_j^m\right)$ (note that since $n$ is odd, $(-1)^{n-1}=1$).  By \eqref{rp3}, $(a_i,B_i)=1$ for $i=1,\ldots, r_1-2$.  Since $t_i\equiv a_i \pmod{B_i}$, $i=1,\ldots, r_1-2$, we have that \eqref{rp2} holds for $i=1,\ldots, r_1-2$.  Since $r_1<n$, $B\equiv 0 \pmod p$ and  for any $i$, $t_0^m+\prod_{j\neq 0,i}t_j^m \equiv 1 \pmod p$.  Thus \eqref{rp2} holds for  $i=r_1-1,\ldots, n$ also.  

It is then fairly easy to see that there exists a choice of $B,M,p,q,a_1,\ldots, a_{r_1-2}\in \mathbb{Z}$ satisfying \eqref{rp3} as well as the other conditions in the statement of the lemma. 
\end{proof}
Choose $B,M,p,q,a_1,\ldots, a_{r_1-2}\in \mathbb{Z}$ as in Lemma \ref{nlem}, with $M$ sufficiently large compared to $p,q,a_1,\ldots, a_{r_1-2}$ (this will be quantified later).  Let $f_j$ be the rational function on $C$ given by $f_j=x-t_j$, $j=1,\ldots, n$.  Let $P=(x_0,y_0)\in C(\Qbar)$, with $y_0\in \mathbb{Z}$ and $k=\mathbb{Q}(P)$.  Then by Lemmas \ref{lrp} and \ref{nlem}, for $j=1,\ldots, n$, $f_j(P)\co_k=\mathfrak{a}_j^m$ for some ideal $\mathfrak{a}_j$ of $\co_k$.  Modulo the prime $q$, the equation defining $C$ becomes 
\begin{equation*}
x(BMy+1)^{m(r_1-2)}=-\prod_{j=1}^{r_1-2} (x-a_j^m)\prod_{j=r_1-1}^{n} (x-p^{m(j-r_1+2)}),
\end{equation*}
with $p^m,p^{2m},\ldots,p^{m(n-r_1+2)},a_1^m,\ldots, a_{r_1-2}^m$ distinct and nonzero modulo $q$ and $BM\not\equiv 0 \pmod q$. Thus, by looking modulo $q$, it follows easily that $C$ is in fact irreducible and
\begin{equation*}
\left[\Qbar(C)\left(\sqrt[m]{f_1},\ldots,\sqrt[m]{f_n}\right):\Qbar(C)\right]=m^n.
\end{equation*}
Let $\phi:C\to \mathbb{A}^1$ be the projection map $(x,y)\mapsto y$.  Note that $\deg \phi=n$.   For $i\in \mathbb{Z}$, let $P_i\in \phi^{-1}(i)$.  Then by Theorem \ref{thmain}, for all but $O(\sqrt{N})$ values $i=1\ldots, N$, $[\mathbb{Q}(P_i):\mathbb{Q}]=n$ and
\begin{equation*}
\rk_m \Cl(\mathbb{Q}(P_i))\geq n-\rk \co_{\mathbb{Q}(P_i)}^*.
\end{equation*}
Using Theorem \ref{Zan}, to finish the proof of Theorem \ref{QN} when $n$ is odd and $1<r_1<n$, it suffices to prove:
\begin{lemma}
For $i\gg 0$, if $[\mathbb{Q}(P_i):\mathbb{Q}]=n$ then $\mathbb{Q}(P_i)$ has exactly $r_1$ real places and 
\begin{equation*}
d_{\mathbb{Q}(P_i)}=O(i^{(r_1-2)m(2n-r_1+1)}).
\end{equation*}
\end{lemma}
\begin{proof}
We need to analyze the roots (in $x$) of the polynomial 
\begin{equation*}
g_y(x)=x(BMy+1)^{m(r_1-2)}+\prod_{i=1}^{n-r_1+2}(x-p^i)\prod_{j=1}^{r_1-2} \left(x-(jBqy+a_j)^m\right)
\end{equation*}
for $y\gg 0$.  We claim that for $y\gg 0$, $g_y(x)$ has $r_1-2$ real roots with absolute value $O(y^m)$, two real roots with absolute value $O(1)$, and $n-r_1$ nonreal complex roots with absolute value $O(1)$.  For $y\gg 0$, the polynomial $g_y(x)$ changes signs once on the interval $[y,(Bqy+a_1)^m]$ and twice on the intervals $[(2jBqy+a_{2j})^m,((2j+1)Bqy+a_{2j+1})^m]$, $j=1,\ldots, \frac{r_1-3}{2}$.  Thus, we obtain $r_1-2$ real roots of size $O(y^m)$.  More precisely, since 
\begin{equation*}
\lim_{y\to \infty}\frac{g_y(y^mx)}{y^{mn}}=x^{n-r_1+2}\prod_{j=1}^{r_1-2} \left(x-(jBq)^m\right),
\end{equation*}
for these $r_1-2$ real roots $\alpha_1<\alpha_2<\cdots <\alpha_{r_1-2}$, $\alpha_j\sim (jBqy)^m$ as $y\to \infty$.  Let $D=((r_1-2)!q^{r_1-2})^m$.  Then
\begin{equation*}
\lim_{y\to \infty}\frac{x^ng_y\left(\frac{1}{x}\right)}{(By)^{m(r_1-2)}}=x^{r_1-2}\left(-D\prod_{i=1}^{n-r_1+2}(1-p^ix)+ M^{m(r_1-2)}x^{n-r_1+1}\right).
\end{equation*}
It follows that the remaining $n-r_1+2$ roots of $g_y(x)$ converge to the reciprocals of the roots of $h(x)=\left(-D\prod_{i=1}^{n-r_1+2}(1-p^ix)+ M^{m(r_1-2)}x^{n-r_1+1}\right)$ as $y\to \infty$.  In particular, all of these roots are bounded in absolute value as $y\to \infty$.  A simple computation then shows that the discriminant of $g_y$ is $O(y^{(r_1-2)m(2n-r_1+1)})$, as desired.  We now determine the number of real roots of $h(x)$.  Looking at sign changes of $h(x)$, it is easily seen that $h(x)$ has at least two real roots.  Furthermore, note that
\begin{equation*}
\lim_{M\to\infty}\frac{h(M^{m(r_1-2)}x)}{M^{(r_1-2)m(n-r_1+2)}}=x^{n-r_1+1}(-Dp^cx+1)
\end{equation*}
and
\begin{equation*}
\lim_{M\to\infty}x^{n-r_1+2}h\left(\frac{1}{xM^{m(r_1-2)/(n-r_1+1)}}\right)=x(-Dx^{n-r_1+1}+1).
\end{equation*}
It follows from this that for $M\gg 0$, $h(x)$ has exactly two real roots.  More precisely, let $\epsilon>0$.  Then for all $M$ sufficiently large (not depending on $y$), as $y\to \infty$, among the remaining $n-r_1+2$ roots of $g_y(x)$, $\beta_1,\ldots,\beta_{n-r_1+2}$, exactly two of them are real and the roots satisfy (after reindexing)
\begin{equation*}
\left|\frac{\zeta^j}{\beta_j}\left(\frac{M^{m(r_1-2)}}{D}\right)^{\frac{1}{n-r_1+1}}-1\right|<\epsilon, \quad j=1,\ldots, n-r_1+1,
\end{equation*}
where $\zeta=e^{\frac{2\pi i}{n-r_1+1}}$, and $\left|\frac{Dp^c}{\beta_{n-r_1+2}M^{m(r_1-2)}}-1\right|<\epsilon$.  Therefore, having fixed a sufficiently large integer $M$, we see that $g_y(x)$ has exactly $r_1$ real roots for $y\gg 0$.
\end{proof}

The proofs of the other cases in the theorem are similar (or even easier), so we give only a few details.  When $n$ is even and $2<r_1<n$, similar to the above, we let $t_0=(BMy+1)^{r_1-3}$, $t_i=iBqy+a_i$, $i=1,\ldots, r_1-3$, and $t_i=p^{i-r_1+3}$, $i=r_1-2,\ldots, n$.  The proof is then essentially identical to the proof above for $n$ odd and $3<r_1<n$.

To prove the cases $n$ odd, $r_1=1$, and $n$ even, $r_1=2$, we consider curves $C$ of the form $x(ay+b)^m=-\prod_{i=1}^n(x-c_i^m)$.  We choose $a,b,c_1,\ldots, c_n$ to be positive integers such that, for any $y\in \mathbb{Z}$, \eqref{rp1} and \eqref{rp2} hold with $t_0=ay+b$ and $t_i=c_i$, $i=1,\ldots, n$.  For $y\gg 0$, the polynomial $x(ay+b)^m+\prod_{i=1}^n(x-c_i^m)$ has exactly one real root if $n$ is odd, exactly two real roots if $n$ is even, and discriminant that is $O(y^{mn})$.  So the desired result follows from Lemma \ref{lrp}, Theorem \ref{Zan}, and Theorem \ref{thmain}, with $V=C$, ${\bf I}=\mathbb{Z}$, $\phi=y$, and $f_i=x-c_i^m$, $i=1\ldots, n$.  The case $n$ odd, $r_1=3$, is similar, except that we consider curves of the form $x(ay+b)^m=\prod_{i=1}^n(x-c_i^m)$.  The case $m$ odd, $n$ even, and $r_1=0$, is similar except that we consider curves of the form $x^2(ay+b)^m=-\prod_{i=1}^n(x-c_i^m)$ (in the case that $m$ and $n$ are both even, the proof breaks down because $-\prod_{i=1}^n(x-c_i^m)=(x(ay+b)^\frac{m}{2})^2$ and so the needed version of \eqref{cond1} does not hold).

In the case $r_1=n$, we consider curves $C$ of the form 
\begin{equation*}
g_y(x)=x(a_0y+b_0)^m+(x-(My+b_1)^m)\prod_{i=2}^n (x-b_i^m)=0, 
\end{equation*}
with $M$ sufficiently large compared to $a_0,b_0,\ldots, b_n$.  With an appropriate choice of integers $a_0,b_0,\ldots, b_n,M$, we can apply Theorem \ref{thmain} to $C$ with $f_1=x-(My+b_1)^m$, $f_i=(x-b_i^m)$, $i=2,\ldots, n$, and $\phi:C\to\mathbb{A}^1$, $(x,y)\mapsto y$.  For $M$ sufficiently large, $g_y(x)$ has $n$ real roots for $y$ sufficiently large and discriminant (in $x$) that is $O(y^{2m(n-1)})$.

Finally, when $m$ and $n$ are both even and $r_1=0$, we consider curves $C$ of the form
\begin{equation*}
g_y(x)=x(a_0y+b_0)^{m(n-1)}+\prod_{i=1}^n (x-(My+b_i)^m),
\end{equation*}
with $M$ sufficiently large compared to $a_0,b_0,\ldots,b_n$.  Again, with an appropriate choice of integers $a_0,b_0,\ldots, b_n,M$, we can apply Theorem \ref{thmain} to $C$ with $f_i=(x-(My+b_i)^m)$, $i=1,\ldots, n$, and $\phi:C\to\mathbb{A}^1$, $(x,y)\mapsto y$.  For $M$ sufficiently large, $g_y(x)$ has no real roots for $y$ sufficiently large and discriminant (in $x$) that is $O(y^{m(n-1)n})$.
\end{proof}

For superelliptic curves $C$ of the form $y^m=a_0\prod_{i=1}^r(x-a_i)$, $a_0,\ldots, a_r\in \mathbb{Z}$, we have discussed applying Theorem \ref{thmain} to $C$ using the maps $\phi_1,\phi_2:C\to \mathbb{A}^1$ given by $\phi_1=x$ and $\phi_2=y$.  Using other maps, it was shown in \cite{Lev2} that in some situations it is possible to improve on Nakano's inequality \eqref{eqN1}.

\begin{theorem}
\label{thLev2}
Let $m,n>1$ be positive integers with $n>(m-1)^2$.  There are $\gg X^{\frac{1}{(m+1)n-1}}/\log X$ number fields $k$ of degree $n$ with $|d_k|<X$ and
\begin{equation*}
\rk_m \Cl(k)\geq \left\lceil \left\lfloor\frac{n+1}{2}\right\rfloor+\frac{n}{m-1}-m\right\rceil.
\end{equation*}
\end{theorem}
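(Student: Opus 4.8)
The plan is to apply Theorem~\ref{thmain} to an explicit superelliptic curve equipped with a degree-$n$ morphism to $\mathbb{A}^1$ that is \emph{not} one of the coordinate projections used for Theorems~\ref{thLev1} and~\ref{QN}, to extract from it a lower bound on $\rk_m\Cl$ of the specialized fields, and then to count those fields with Theorem~\ref{Zan}.

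First I would fix the curve. Following the strategy of \cite{Lev2}, take $C$ to be a nonsingular projective model of a superelliptic curve $y^m=a_0\prod_{i=1}^{d}(x-a_i)$, $a_i\in\mathbb{Z}$, together with a non-standard morphism $\phi:C\to\mathbb{A}^1$ of degree exactly $n$, choosing $d$ as large as the existence of such a $\phi$ allows. On such a curve the functions $f_j=x-a_j$ (or quotients of them) satisfy $(f_j)=mD_j$ for divisors $D_j$ supported on the ramification points, and $[D_1],\dots,[D_r]$ span a subgroup $(\mathbb{Z}/m\mathbb{Z})^r\subset\Pic(C)$ with $r=d-1$; Lemma~\ref{LKum} then gives $[\Qbar(C)(\sqrt[m]{f_1},\dots,\sqrt[m]{f_r}):\Qbar(C)]=m^r$, which is \eqref{cond1}. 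The hypothesis $n>(m-1)^2$ enters exactly here: it is what lets $d$ be taken with
\begin{equation*}
r=d-1\ \geq\ \left\lceil\left\lfloor\tfrac{n+1}{2}\right\rfloor+\tfrac{n}{m-1}-m\right\rceil+\left\lfloor\tfrac{n+1}{2}\right\rfloor-1,
\end{equation*}
i.e.\ $d\approx\frac{mn}{m-1}-m$, while $C$ still carries a degree-$n$ map.

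Next I would supply the arithmetic input of Theorem~\ref{thmain} with $S=\{\infty\}$. Choosing the integers $a_i$ (or, if $C$ is allowed to vary in a one-parameter family, the coefficients of the polynomials parametrizing them) by a congruence/CRT argument in the style of Lemma~\ref{nlem}, and then running the coprimality computation of Lemma~\ref{lrp}, one arranges that for every integer $i$ the values $f_j(P_i)$, $P_i\in\phi^{-1}(i)$, generate $m$-th power ideals of $\co_{\mathbb{Q}(P_i)}$, so that \eqref{cond2} holds with $S=\{\infty\}$. Theorem~\ref{thmain} then yields, for all but $O(\sqrt{N})$ values $i=1,\dots,N$, that $[\mathbb{Q}(P_i):\mathbb{Q}]=n$ and $\rk_m\Cl(\mathbb{Q}(P_i))\geq r-\rk\co_{\mathbb{Q}(P_i)}^*$. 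I would then pin down the signature by analysing, for $i\gg0$, the real roots of the degree-$n$ polynomial cutting out $\phi^{-1}(i)$: arranging the parameters so that $\mathbb{Q}(P_i)$ has $r_1\in\{0,1\}$ real places (according to the parity of $n$) makes $\rk\co_{\mathbb{Q}(P_i)}^*=\lfloor\frac{n+1}{2}\rfloor-1$ by Dirichlet's unit theorem, whence $r-\rk\co_{\mathbb{Q}(P_i)}^*\geq\lceil\lfloor\frac{n+1}{2}\rfloor+\frac{n}{m-1}-m\rceil$ by the choice of $d$.

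Finally, for the count: a direct estimate on the discriminant of the fiber polynomial gives $|d_{\mathbb{Q}(P_i)}|=O(i^{(m+1)n-1})$, and Theorem~\ref{Zan} produces $\gg N/\log N$ pairwise non-isomorphic fields among $\mathbb{Q}(P_1),\dots,\mathbb{Q}(P_N)$; taking $N\asymp X^{1/((m+1)n-1)}$ delivers the stated bound $\gg X^{1/((m+1)n-1)}/\log X$. The hard part is the first step: producing one curve that simultaneously (i) has a degree-exactly-$n$ map to $\mathbb{A}^1$, (ii) has $\rk_m\Pic(C)_{\mathrm{tors}}$ as large as the displayed inequality demands, (iii) specializes to fields of minimal unit rank, and (iv) has fiber polynomials whose roots and coefficients grow slowly enough that $|d_{\mathbb{Q}(P_i)}|$ is only $O(i^{(m+1)n-1})$ rather than the cruder size coming from an arbitrary degree-$n$ fiber. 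Balancing (i) against (ii) is precisely where $n>(m-1)^2$ is forced, and securing (iii)--(iv) requires engineering the branch locus and the map $\phi$ so that only a controlled number of the $n$ roots in each fiber are allowed to grow while the rest stay bounded.
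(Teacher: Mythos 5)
Your outline reproduces the paper's strategy only at the level of its skeleton---a superelliptic curve $y^m=g(x)$ with many linear factors, functions $f_j$ giving \eqref{cond1} via Lemma~\ref{LKum}, congruence conditions giving \eqref{cond2} with $S=\{\infty\}$, a non-coordinate map of degree $n$, a signature and discriminant analysis, and a count via Theorem~\ref{Zan}---but it omits the one genuinely new ingredient, the construction of the degree-$n$ map, and that is a real gap rather than a deferred detail. You posit ``a non-standard morphism $\phi:C\to\mathbb{A}^1$ of degree exactly $n$'' on a curve with $d\approx \frac{mn}{m-1}-m$ branch points and leave its existence to ``engineering,'' but a superelliptic curve $y^m=g(x)$ with $\deg g=r$ only obviously carries the projections $x$ (degree $m$) and $y$ (degree $r$); producing a map of intermediate degree $n$ is exactly the point of the theorem. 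The paper does this by taking $r$ maximal with $r-\lfloor r/m\rfloor\le n$ and $(r,m)=1$ (whence $r\ge n+\frac{n}{m-1}-m+1$; this is where $n>(m-1)^2$ is used), letting $h(x)$ be the Taylor expansion of $\sqrt[m]{g(x)}$ at $x=0$ truncated to degree $\lfloor r/m\rfloor-1$, and setting $\phi=b(y-h)/x^{r-n}$: the vanishing of $y-h$ to order at least $\lfloor r/m\rfloor\ge r-n$ along one branch over $x=0$ cancels the pole of $x^{-(r-n)}$ there, and a pole count then gives $\deg\phi=n$. Everything you label ``the hard part''---items (i), (iii), (iv) of your last paragraph, in particular the bound $|d_{\mathbb{Q}(P_i)}|=O(i^{(m+1)n-1})$, which depends on the precise shape of the fiber polynomial of this particular $\phi$---hinges on that formula, so asserting the map's existence without it essentially restates the theorem.

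Two smaller points. First, \eqref{cond2} with $S=\{\infty\}$ is not arranged ``for every integer $i$''; as in the paper one restricts to an arithmetic progression $i\equiv i_0\pmod N$ and applies Theorem~\ref{thmain} with ${\bf I}$ equal to that progression. Second, your signature claim $r_1\in\{0,1\}$, i.e.\ unit rank exactly $\lfloor\frac{n+1}{2}\rfloor-1$, is stronger than what the construction is shown to give and is what lets you shave your $d$ down: the paper only proves that $\mathbb{Q}(P_i)$ has at most two real places for $i\gg0$, so the unit rank is only bounded by $\lfloor n/2\rfloor$, and it compensates by using $r-1\ge n+\frac{n}{m-1}-m$ functions. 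With $d$ chosen as you propose, if $r_1=2$ occurs for even $n$ your lower bound falls one short of the target, so you must either justify $r_1\in\{0,1\}$ or take $d$ one larger---subject, again, to the existence of the degree-$n$ map, which is the missing core of the argument.
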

Since a complete proof of this theorem is given in \cite{Lev2}, we only give a sketch of the proof.
\begin{proof}
Let $m,n>1$ be integers with $n>(m-1)^2$.  Let $r$ be the largest integer such that $r-\left\lfloor\frac{r}{m}\right\rfloor\leq n$ and $(r,m)=1$.  It is easily checked that $r\geq n+\frac{n}{m-1}-m+1$.  Let $C$ be the curve defined by $y^m=g(x)=-(x-a_r^m)\prod_{j=1}^{r-1}(x+a_i^m)$, where $a_1,\ldots, a_r$ are certain carefully chosen integers.  Let $f_j=x+a_{j}^m$, $j=1,\ldots, r-1$.  Then \eqref{cond1} holds (with $V=C$ and $r\mapsto r-1$).  Let $h(x)$ be the Taylor series for $\sqrt[m]{g(x)}$ at $x=0$ truncated to degree $\left\lfloor\frac{r}{m}\right\rfloor-1$ with $h(0)=\prod_{j=1}^ra_j$.  Let $b$ be the lowest common denominator of the coefficients of $h$.  Let $\phi$ be the rational function $\phi=\frac{b(y-h)}{x^{r-n}}$ on $C$.  For $i\in \mathbb{Z}$, let $P_i\in \phi^{-1}(i)$.  Having chosen $a_1,\ldots, a_r$ properly, it can be shown that there exist integers $i_0$ and $N$ such that if $i\equiv i_0 \pmod N$, then for all $j$, $f_j(P_i)=\mathfrak{a}_{i,j}$ for some fractional ideal $\mathfrak{a}_{i,j}$ of $\co_{\mathbb{Q}(P_i)}$.  So \eqref{cond2} holds with ${\bf I}=\{i\in \mathbb{Z}\mid i\equiv i_0 \pmod N\}$.  Furthermore, it can be computed that the map induced by $\phi$ has degree $n$, $d_{\mathbb{Q}(P_i)}=O(i^{(m+1)n-1})$, and $\mathbb{Q}(P_i)$ has at most two real places for $i\gg 0$.  An appropriate application of Theorems \ref{Zan} and \ref{thmain} finishes the proof.
\end{proof}
\bibliography{class2}
\end{document}